\newcommand{\der}{\delta}
\newcommand{\E}{\mathbb E}
\newcommand{\R}{\mathbb R}
\newcommand{\N}{\mathbb N}
\newcommand{\bP}{\mathbb P}
\newcommand{\bex}{\mathbf{E}}
\newcommand{\bp}{\mathbf{P}}   
\newcommand{\cf}{\mathcal F}
\newcommand{\al}{\alpha}
\newcommand{\ep}{\varepsilon}
\newcommand{\ga}{\gamma}
\newcommand{\la}{\lambda}
\newcommand{\si}{\sigma}
\newcommand{\ups}{\upsilon}
\newcommand{\lp}{\left(}
\newcommand{\rp}{\right)}
\newcommand{\lc}{\left[}
\newcommand{\rc}{\right]}
\newcommand{\lcl}{\left\{}
\newcommand{\rcl}{\right\}}
\newcommand{\lln}{\left|}
\newcommand{\rrn}{\right|}
\newtheorem{theorem}{Theorem}[section]
\newtheorem{assumption}[theorem]{Assumption}
\newtheorem{corollary}[theorem]{Corollary}
\newtheorem{lemma}[theorem]{Lemma}
\newtheorem{proposition}[theorem]{Proposition}
\theoremstyle{remark}
\newtheorem{remark}[theorem]{Remark}
\theoremstyle{remark}
\newcommand{\bean}{\begin{eqnarray*}}
\newcommand{\eean}{\end{eqnarray*}}
\newcommand{\ben}{\begin{enumerate}}
\newcommand{\een}{\end{enumerate}}
\newcommand{\beq}{\begin{equation}}
\newcommand{\eeq}{\end{equation}}
\newcommand{\rsg}{~$\textnormal{RS}(G,p)/G/1$~}
\newcommand{\dgi}{~$\Delta_{(i)}/G/1$~}
\newcommand{\kmt}{~Koml\'os-Major-Tusn\'ady~}
\begin{document}

\title[Transitory Strong Embeddings]{Strong Embeddings for Transitory Queueing Models}

\address{Prakash Chakraborty: Department of Statistics,
  Purdue University,
  W. Lafayette, IN 47907,
  USA.}
\email{chakra15@purdue.edu}

\address{Harsha Honnappa: School of Industrial Engineering, 
  Purdue University, 
  W. Lafayette, IN 47906
  USA.}
\email{honnappa@purdue.edu}

\thanks{Authors supported by the National Science Foundation through grant CMMI/1636069.}

%\date{\today}

\author[P. Chakraborty, H. Honnappa]
{Prakash Chakraborty, Harsha Honnappa}

\begin{abstract}
In this paper we establish strong embedding theorems, in the sense of the\kmt framework, for the performance metrics of a general class of transitory models for nonstationary queueing systems. The non-stationary and non-Markovian nature of these models makes the computation of performance metrics hard. The strong embeddings yield error bounds on sample path approximations by diffusion processes, in the form of functional strong approximation theorems.
\end{abstract}

\maketitle

%{
%\hypersetup{linkcolor=black}
% \tableofcontents 
%}
\section{Introduction}
In this paper we establish strong embedding theorems, in the sense of the\kmt (KMT) framework, for the performance metrics of a general class of {\em transitory queueing models}~\cite{HoJaWa2016a,BeHoLe2019}. 
%These are a special case of transitory stochastic models that have been used, not only for modeling queueing systems, but also in software reliability models~\cite{JeMo1972,KoSp1983} and intensity based credit risk models; see the literature survey in~\cite{SpSt2018}. 
Transitory queueing models assume a large, but finite population of customers arrive at the system over some time horizon. Examples of such systems include hospital surgery departments and clinics, subscription-based services such as video and game streaming, app-based ride-sharing/transportation and delivery services. In each of these cases, the pool of potential customers is known to the service provider {\em a priori} -- due to appointments that are handed out to patients ahead of time in the healthcare examples, and to subscriptions/sign-ons in the case of streaming and app-based services. However, the finite pool of customers implies that transitory models are nonstationary, both in the sense that they are {\it purely} transient in nature and also because the model parameters can vary temporally. This makes the computation of the performance metrics rather difficult. Consequently, we seek to approximate the performance metric stochastic processes by simpler ones that capture their most vital temporal features. The strong embedding theorems in this paper yield probabilisitic error bounds between the discrete-event performance metric processes and simpler diffusion process approximations in terms of the population size $n$. Our results, therefore, will provide practitioners and engineers with a sense of how large the population size must be, and how the service effort must be concomitantly scaled, so that diffusion approximations can be confidently used for performance analysis and optimization.%We state our main results as functional strong approximation theorems, that follow from the probabilistic bounds, and yield error bounds on a sample path-by-sample path basis.

Strong approximations were first used for studying time homogeneous queueing models in~\cite{Ro1980}; see the survey paper~\cite{Gl1998} for a comprehensive introduction to the use of strong approximations to $G/G/1$ queueing models in heavy-traffic. In general, the transient analysis of queueing models is rather complicated, and therefore a number of approximations have been developed in appropriate scaling regimes, typically by certain types of reflected diffusion processes~\cite{ChYa2001}. As queueing models can be expressed (approximately) as functionals of random walks, strong approximations are particularly useful in this application context since the driving random walks can be directly replaced by approximating Brownian motion processes. Strong approximation analysis yields rates of convergence and, consequently, rigorous justification of the heavy-traffic approximation on a sample path basis. Our results provide similar insights for a class of nonstationary queueing models under a {\em population acceleration} scaling framework.

We assume that the offered load to the queueing system is `time-of-day' dependent and displays long-range correlations. The modeling and analysis of transitory queues is, in general, quite complicated and we operate under the simplifying assumption that the time-of-day and correlative effects are present solely in the traffic characteristics and that the service requirements of the arriving customers are independent and identically distributed. We propose a two-variable traffic model labeled RS$(G,p)$ wherein we impute the $i$th arriving customer (out of $n$) with the random variable tuple $(T_i,\zeta_i)$, where $T_i$ takes values in $[0,\infty)$ and $\zeta_i$ is binary. $T_i$ models the (potential) arrival epoch of customer $i$ and $\sum_{i=1}^n \zeta_i$ is the number of customers who actually enter the queue; here `RS' stands for `randomly scattered.' We assume that the tuples are independent and identically distributed over the population, and that $T_1$ follows a distribution $G$ and $\mathbb E\zeta_1 = p$. We also assume that the service requirements are generally distributed with finite moment generating function in the neighborhood of zero and independent of the tuple. Consequently we label this the \rsg queue. We make the following contributions in this paper:

\begin{enumerate}
	\item We prove functional strong approximation theorems (FSATs) for the workload and queue length performance metric processes of the \rsg queue in Theorems~\ref{thm:prelim-2} and~\ref{thm:prelim-3} (respectively). These FSATs yield sample path error bounds between the performance metrics and  nonstationary reflected Brownian Bridge processes. The nonstationary Brownian Bridge processes reflect the fact the offered load is time-of-day dependent and has long range correlations.
	\item The proofs of the FSATs are consequences of non-asymptotic functional strong embedding theorems (FSETs) proved for the RS$(G,p)$ traffic process in Proposition~\ref{prop:dropout-strong}, the workload process in Proposition~\ref{prop:vw-sa} and the queue length process in Proposition~\ref{prop:queue-length} that yield exponential probability  bounds as a function of the population size. 
	\item Our proof of the non-asymptotic probabilistic bounds require Dvoretzky-Kiefer-Wolfowitz (DKW) \cite{DKW}  style inequalities for Brownian motion randomly time-changed by a stochastic pure jump process, proved in Proposition~\ref{prop:3}. This generalized DKW inequality may be of independent interest and useful in proving bounds for other types of models.
\end{enumerate}

% These limits can seem incongruent with our stated interest in computing performance metrics for purely transient systems where the number of arrivals, by definition, should be finite. Strong approximation analysis, in contrast, would immediately yield a rate of convergence and a rigorous justification for replacing the complicated arrival and service models by diffusion models. These bounds can also be used for simulation purposes, where it is quite often easier to simulate a diffusion process using Euler-Maruyama type discretization. Similarly, optimal design/control of time inhomogeneous queueing models is quite difficult, and it is natural to revert to an approximating controllable process; for instance see~\cite{moor} in the transitory setting. Strong approximations can be used to justify the design of a controller for an approximating diffusion model. 

\paragraph{\textbf{Commentary on main results}}
Our analysis leans on strong approximations for empirical processes and random walks~\cite{CsRe2014}, but also requires substantial innovation. The FSAT in Theorem~\ref{thm:prelim-2} is a consequence of Proposition~\ref{prop:vw-sa} where we prove a strong embedding result for the workload process of a \rsg queue, under the assumption that the service times possess finite moment generating functions in a neighborhood around zero. We show that, with high probability, for a given fixed population size $n$ the sample paths of the workload process can be approximated by those of a reflected Brownian bridge process with time dependent drift and diffusion coefficients. Indeed, we show that the error rate is $O(n^{1/4}\sqrt{\log n})$. Next, the FSAT to the queue length process of the \rsg queue in Theorem~\ref{thm:prelim-3} follows from Proposition~\ref{prop:queue-length}. Paralleling the result in~Proposition~\ref{prop:vw-sa}, we show that the approximating process is a reflected Brownian bridge process with time inhomogeneous drift and diffusion coefficients. However, the drift and diffusion coefficients are scaled versions of those observed in Proposition~\ref{prop:vw-sa}. We note that the analysis of the queue length strong embedding theorem is significantly more involved. The proofs of these results requires a careful construction of a DKW-style inequality for a time-changed Brownian motion process, which we did not find in the literature (see Proposition~\ref{prop:3}). Again, we show that the error rate for the queue length process is $O(n^{1/4}\sqrt{\log n})$.

\paragraph{\textbf{Relation with prior transitory analyses.}}
Note that the RS($G,p$) model affords flexibility for modeling service systems where the pool of potential customers is known {\em a priori}. This typically includes systems where customers `subscribe' to the service ahead of time -- for example, clinics and surgical departments in hospitals where patients are given appointment times, or video and game streaming services with subscribing customers, or ridesharing and food delivery services where the pool of customers are those who have downloaded the smartphone app. In each of these cases, the service provider has knowledge of who the potential customers are, but not all customers will use the service on a given `day'. The randomized arrivals in the RS($G,p$) model accounts for this effect, which is ignored in the \dgi model where $\sum_{i=1}^n \zeta_i = n$ (rendering this variable redundant). Note that the RS($G,p$) model can be extended to a periodic traffic setting, as done in~\cite{GlHo2017}, and the performance metric approximations can still be used in that setting.

The bibliography on the \dgi model now includes pointwise limit results~\cite{Ne2013,Lo1994}, functional strong laws and central limits~\cite{HoJaWa2016a,HoJaWa2016b,GlHo2017,BeHoLe2019} and large deviations principles~\cite{Ho2017,GlHo2017}. In the population acceleration scaling limit, the results in~\cite{HoJaWa2016a,HoJaWa2016b} show that the limiting diffusion for the workload and queue length processes are regulated through a {\em directional derivative} reflection map~\cite{MaRa2010}. This limit can be recovered by the FSATs in Theorems~\ref{thm:prelim-2} and ~\ref{thm:prelim-3}, though the result in~\cite{HoJaWa2016a,HoJaWa2016b} holds under the weaker condition that the service requirements have two finite moments. However, extracting performance measures (such as moments of the workload/queue length) from the directional derivative reflected process is incredibly hard. Indeed, in~\cite{BeHoLe2019}, a different critical scaling is used to show that the queue length converges to a reflected Brownian motion with parabolic drift when the arrival epoch distribution $G$ is exponential. On the other hand, in~\cite{GlHo2017} a special ``critical'' load condition is used to prove that the workload process is approximated by a reflected Brownian motion process. These limit processes can be recovered automatically from the FSATs proved in this paper, albeit at the cost of stronger conditions on the service requirements. We note, however, that with effort it is possible to extend the FSATs to cases where only $m > 2$ moments are available. 

\paragraph{\textbf{Relation to FSATs for nonstationary models.}}
%Much of the literature on strong approximations for time inhomogeneous queueing models, however, has focused on Markovian queueing models, owing to the fact that the strong approximation literature is well established for (the underlying) Markov chains. In contrast, there are no strong approximation results available for transitory stochastic models. In this paper, we make the following contributions:
{There is a large and growing literature on nonstationary queueing models covering the whole range of problems that confront the modeling of nonstationary service systems. A crucial difference between this large body of work and the growing literature on transitory models is that the former implicitly assumes an infinite population of customers, while transitory models are exclusively finite population. We cannot possibly do justice to the large body of work on nonstationary models; see~\cite{Wh2018} for a recent review. Instead, we focus on strong approximation results that are most closely related. To the best of our knowledge, strong approximations have been proved almost exclusively for Markovian nonstationary models; note that the literature on strong approximations for stationary queueing networks is far more extensive. The most influential papers in this genre are~\cite{MaMa1995,MaMaRe1998}, where the important {\em uniform acceleration} scaling regime was introduced. In the former, strong approximations for Markov processes were leveraged to prove an FSAT (and consequently functional strong laws and central limit theorems) for an isolated time-varying Markovian single-server queue. This analysis was significantly generalized in the latter paper to include multi-server queueing networks with abandonment. In~\cite{MaPa1995}, strong approximations were leveraged to prove functional limits for state-dependent, nonstationary Markovian queues. More recently~\cite{KoPe2018} consider nonstationary Markovian arrival processes (MAPs) as models of the traffic, and develop a bespoke Poisson representation of the MAP process. They then exploit the strong approximations in~\cite{MaMaRe1998} to prove functional strong laws and central limit theorems. All of these results are premised on the availability of strong approximation results for Markov processes (see, for instance,~\cite[Chapter 7]{EtKu2009}). However, the performance metric processes for the \rsg queue are not Markov (though, of course, one could do state-space enlargement) and we therefore choose to develop the strong approximation results `from scratch.' What is also somewhat remarkable is the fact that we are able to leverage strong approximation results proved for stationary random walks and empirical processes to study nonstationary queueing models without making explicit Markovian assumptions. We believe the methods highlighted in this paper can be used for analyzing other nonstationary stochastic models (such as nonstationary many-server queues, networks of nonstationary queues and even nonstationary multi-class queues).
}
%Performance analysis using such models even under the `nicest' analytical conditions (for instance, using a birth-death model with time-varying rates) is formidable.  Consequently, strong approximations that yield sample path-by-path approximations are a significant and useful mathematical tool for performance analysis of time-varying systems.

%The strong approximations developed here can also form the basis for the modeling and analysis of more `traditional' non-Markovian time inhomogeneous queues, by randomizing either the finite horizon over which we observe arrivals or just the finite number of arrivals. 

The rest of the paper is organized as follows. We start with preliminaries and main results in Section~\ref{sec:prelim}. In Section~\ref{sec:primer} we provide a brief primer on the strong approximation methodology, particularly the coupling arguments that underly the KMT construction. We do so to make the paper self-contained and since the KMT construction is recondite and not widely understood. Next, we present the DKW-style inequality for controlling the error between the Brownian motion and a counterpart process stochastically time-changed by a jump process in Section~\ref{sec:bm}. Section~\ref{sec:arrive} presents strong embeddings for the RS$(G,p)$ traffic process. The strong embeddings for the workload and queue length processes are proved in Section~\ref{sec:w} and Section~\ref{sec:q} (respectively). We end with commentary and conclusions in Section~\ref{sec:conclusions}.

\section{Preliminaries and Main Results}\label{sec:prelim}
\subsection{A mechanistic model of queueing}\label{sec:model}
Consider a single	 server, infinite buffer queue that is non-preemptive, non-idling, and starts empty. Service follows a first-come-first-served (FCFS) schedule. Let 
%$d \geq 1$ be the number of servers and 
$n$ be the {\em nominal} number of customers applying for service. Customers independently sample an arrival epoch $T_i$, $i = 1,\ldots,n$, from a common distribution function. In addition, all customers independently sample identical Bernoulli random variables $\zeta_i$, $i=1,\ldots,n$. Customer $i$ chooses to turn up at time $T_i$ only if $\zeta_i = 1$; we call this the ``dropout'' variable. The arrival process is the cumulative number of customers that have arrived by time $t$. Let \mbox{Bern}($p$) represent the Bernoulli probability distribution with parameter $p$.
\begin{assumption}\label{assum:arriv-dropouts}
For every $n \geq 1$, let $T_1, \ldots, T_n$ be iid samples from a general distribution with distribution function G. 
Denote $G_n$ to be the empirical distribution function given by:
\beq\label{eq:2-d}
G_n(t) := \dfrac{1}{n} \sum_{i=1}^n \mathbf{1}_{\{T_i \leq t\}}.
\eeq
 Let $\zeta_1, \ldots, \zeta_n$ be iid samples from \textnormal{Bern}($p$). Then the arrival process $A_n$ is given by:
\beq\label{eq:arriv-dropouts}
A_n(t) = \sum_{i=1}^{nG_n(t)} \zeta_i.
\eeq
\end{assumption}
\begin{remark}\label{rem:Delta_i}
We call $A_n$ in~\eqref{eq:arriv-dropouts} as the RS$(G,p)$ traffic model. The $\Delta_{(i)}/G/1$ model introduced in \cite{HoJaWa2016a} is a special case of Assumption~\ref{assum:arriv-dropouts}, corresponding to $p=1$.
\end{remark}
\begin{remark}
	Note that it is possible to consider other ways of modeling a random number of arrivals. However, the dropout model considered here is a {\em mechanistic} way of describing the traffic. Note that the model assumes each user will sample a potential time to arrive and a binary indicator that the customer will actually enter the queue at that time. See Section~\ref{sec:conclusions} for further discussion.
\end{remark}
\begin{remark}
	Observe that while the nominal number of arrivals is $n$, the actual number of arrivals realized is random. This traffic model provides a mechanistic description of {\em nonstationary arrivals}: since the distribution $G$ is non-uniform (in general), the expected number of arrivals per-unit time $\mathbf E[A_n(t)]/t$ can be seen to equal $n p G(t)/t$, by an application of Wald's identity. This can be seen as a surrogate of an arrival rate that is clearly time-varying; note that we have not assumed that the distribution is differentiable and consequently defining the rate as the derivative of $\mathbf E[A_n(t)]$ is inappropriate. A crucial point to note is that this time dependency arises from microscopic behavior as opposed to a posited time dependency in the rate function. This stands in contrast with the vast majority of nonstationary models proposed in the literature where the model description starts with posited time-varying rate functions. 
\end{remark}
{Sometimes it is useful to consider arrivals from a general distribution which in turn approaches the limiting distribution $G$ as $n \to \infty$.} 
\begin{assumption}\label{assum:arriv-dropouts-2}
{For every $n \geq 1$, let $T_1, \ldots, T_n$ be iid samples from a general distribution with distribution function $G^{(n)}$ which satisfies the following condition:
\beq\label{eq:Gn-dkw}
 r_n(G) := \sup_{t \in [0, \infty)} \lln G^{(n)}(t) - G(t) \rrn = O\lp \dfrac{1}{\sqrt{n}} \rp,
\eeq
for some strictly increasing and Lipschitz continuous distribution function $G$. In addition, assume that each $G^{(n)}$ is Lipschitz continuous and the Lipschitz coefficient increases at most polynomially in $n$. The arrival process $A_n$ is now defined similar to \eqref{eq:arriv-dropouts}.}
\end{assumption}

\begin{remark}
For simplicity and ease of presentation we will assume that arrivals are supported on $[0, \infty)$, that is, $G(0)= G^{(n)}(0)=0$.
\end{remark}

Next, let $\{V_i, i \geq 1\}$ be a sequence of independent and identically distributed non-negative random variables. $V_i$ represents the service requirement in time units of the $i^{\textnormal{th}}$ potential customer who turns up into the system. We also assume that the sequence is independent of the arrival times $T_i$, $i=1,\ldots,n$ and the corresponding indicators of turning up $\zeta_i$, $i=1, \ldots, n$.
\begin{assumption}\label{assum:1}
For every $n \geq 1$, let $V_1, \ldots, V_n$ be iid samples from a distribution which admits existence of a moment generating function in a neighborhood of zero. Let $\mu$ and $\si$ denote the mean and standard deviation respectively of this distribution. Let 
\beq\label{eq:W_n}
W_n(t) = \sum_{i=1}^{A_n(t)} V_i,
\eeq
denote the cumulative offered load to the system till time $t$.
\end{assumption}
We assume that the server efficiency is $c_n$, i.e. it completes $c_n$ jobs in unit time. Let $M_n(t)$ be the ``truncated'' renewal process counting the number of jobs that the server can complete by time $t$ if working continuously with efficiency $c_n$ (notice that only $n$ jobs arrive to the system):
\beq\label{eq:M_n}
M_n(t) := \sup \lcl 0\leq m \leq n : \sum_{i=1}^m V_i \leq c_n t \rcl.
\eeq
%We call this the \rsg queueing model.

\subsection{Functional strong approximations}
In this section, we list the main results proven in the sequel. Strong approximation results are usually stated in terms of versions of the random variables we wish to approximate. In our case, we require versions of the random arrival times $T_i$, the indicators of turning up $\zeta_i$ and the service times $V_i$. In order to avoid repetition we do not mention this crucial requirement in the following theorem statements. However the same version suffices for each theorem below. Let us also note that is often customary in the literature to assume that the underlying probability space is rich enough to support the random variables as well as the approximating stochastic processes. Our first result provides a strong embedding for the arrival process. Its proof follows from the forthcoming Proposition~\ref{prop:dropout-strong}.
 {\begin{theorem}\label{thm:prelim-1}
There exists a Brownian motion $\hat{B}$, a Brownian bridge $B^{\textnormal{br},n}$ such that if $H_n$ be defined as:
$$
H_n(t) = 
\begin{cases}
npG(t) + \sqrt{n} \lp p B_{G(t)}^{\textnormal{br},n} + \sqrt{p(1-p)} \hat{B}_{G(t)} \rp, &\text{under Assum.~\ref{assum:arriv-dropouts}},\\
np(G(t)+r_n(G)) + \sqrt{n} \lp p B_{G(t)}^{\textnormal{br},n} + \sqrt{p(1-p)} \hat{B}_{G(t)} \rp, &\text{under Assum.~\ref{assum:arriv-dropouts-2},}
\end{cases}
$$
then
$$
\sup_{t \in [0, \infty)} \lln A_n(t) - H_n(t) \rrn \stackrel{\textnormal{a.s.}}{=} O \lp n^{1/4} \sqrt{\log n} \rp.
$$
\end{theorem}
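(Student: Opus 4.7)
The plan is to decompose the arrival process into two conditionally independent pieces and apply the classical \kmt embeddings to each. Conditionally on $(T_1,\ldots,T_n)$, $A_n(t)$ is a partial sum of $nG_n(t)$ i.i.d.\ Bernoulli$(p)$ variables, so I write
\[
A_n(t) = p\cdot nG_n(t) + \sum_{i=1}^{nG_n(t)}(\zeta_i - p).
\]
The first term will be addressed through an empirical-process KMT embedding, and the second through a random-walk KMT embedding. Since $(T_i)$ and $(\zeta_i)$ are independent by assumption, both couplings can be built on a common probability space so that the resulting Brownian bridge and Brownian motion are independent.

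For the empirical part, the KMT theorem for empirical distribution functions (as in \cite{CsRe2014}) supplies a sequence of Brownian bridges $B^{\textnormal{br},n}$ with
\[
\sup_{t\in[0,\infty)}\bigl|\sqrt{n}(G_n(t)-G(t)) - B^{\textnormal{br},n}_{G(t)}\bigr| = O\!\left(\tfrac{\log n}{\sqrt{n}}\right)\quad\text{a.s.}
\]
Multiplying through by $np$ gives $p\cdot nG_n(t) = npG(t) + p\sqrt{n}\,B^{\textnormal{br},n}_{G(t)} + O(\log n)$, which is of strictly smaller order than the target error $O(n^{1/4}\sqrt{\log n})$ and contributes the first two pieces of $H_n(t)$.

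For the dropout walk, the centered Bernoullis $\zeta_i - p$ are bounded and hence have a moment generating function in a neighborhood of zero, so the KMT embedding for partial sums yields a Brownian motion $\tilde B$ with
\[
\sup_{m\le n}\Bigl|\sum_{i=1}^m(\zeta_i - p) - \sqrt{p(1-p)}\,\tilde B_m\Bigr| = O(\log n)\quad\text{a.s.}
\]
Setting $\hat B_s := n^{-1/2}\tilde B_{ns}$ defines a standard Brownian motion, and identifies $\sqrt{p(1-p)}\,\tilde B_{nG(t)}$ with $\sqrt{np(1-p)}\,\hat B_{G(t)}$, which matches the third term of $H_n(t)$.

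What remains—and is the main obstacle—is to bound the random time-change error $\tilde B_{nG_n(t)} - \tilde B_{nG(t)}$. A naive estimate combining the DKW bound $\sup_t|nG_n(t)-nG(t)| = O(\sqrt{n\log n})$ with L\'evy's modulus of continuity for Brownian motion produces only $O(n^{1/4}(\log n)^{3/4})$, which is too weak. To sharpen this to $O(n^{1/4}\sqrt{\log n})$, I would invoke Proposition~\ref{prop:3}, the authors' DKW-style inequality for a Brownian motion randomly time-changed by a pure jump process, which is introduced precisely to handle this step. Summing the three error contributions then delivers the claimed rate under Assumption~\ref{assum:arriv-dropouts}.

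For Assumption~\ref{assum:arriv-dropouts-2} the same decomposition is used, but KMT now embeds the empirical process of the $G^{(n)}$-samples and produces a bridge approximation centered at $G^{(n)}$. The polynomial Lipschitz condition on $G^{(n)}$ together with $r_n(G)=O(n^{-1/2})$ then let me replace $G^{(n)}$ by $G$ in the deterministic drift (the bias being absorbed into the additive correction $np\,r_n(G)$) and in the arguments of $B^{\textnormal{br},n}$ and $\hat B$, without inflating the error beyond $O(n^{1/4}\sqrt{\log n})$. All of this is packaged into Proposition~\ref{prop:dropout-strong}, from which Theorem~\ref{thm:prelim-1} is an immediate restatement.
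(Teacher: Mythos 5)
Your proposal follows essentially the same route as the paper: your decomposition $A_n(t)=p\,nG_n(t)+\sum_{i=1}^{nG_n(t)}(\zeta_i-p)$ with the empirical-process KMT embedding, the random-walk KMT embedding at the random index, and the time-change error $\hat{B}_{G_n(t)}-\hat{B}_{G(t)}$ handled by Proposition~\ref{prop:3} is exactly the decomposition $A_{1,n},A_{2,n},A_{3,n}$ used in the paper's Proposition~\ref{prop:dropout-strong}, and your treatment of Assumption~\ref{assum:arriv-dropouts-2} via $r_n(G)$ and the bridge/motion argument swap matches the paper's extra terms $A_{4,n},A_{5,n}$. The argument is correct as proposed.
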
}

\begin{remark}
	It is useful to contrast Theorem~\ref{thm:prelim-1} with the setting in \cite{Wh2016}. In the latter, traffic is modeled through a sequence of time-changed stochastic counting processes $\{A^n(t) := (N\circ \Lambda^n)(t)\}$, where $N$ is a stationary stochastic counting process that satisfies an FCLT and $\Lambda^n$ is a posited cumulative arrival rate function that is assumed to be such that $\hat \Lambda_n(t) := n^{-1/2}(\Lambda^n(nt) - nt)$ satisfies $\hat \Lambda_n(t) \to \hat \Lambda(t)$ uniformly on compact sets of $[0,\infty)$ as $n \to \infty$, for some deterministic limit function $\hat \Lambda$. \cite[Theorem 3.1]{Wh2016} shows that the scaled traffic process $\hat A_n(t) := n^{-1/2}(A^n(nt) - nt)$ converges to a limit $B + \hat\Lambda$, where $B$ is a Brownian motion. A vital advantage of such a traffic model is that the stochasticity and the non-stationarities/time-dependencies are completely separated from each other in the limit.
	
	 On the other hand we do not see such a clean separation in $H_n$ immediately. However,  suppose that Assumption~\ref{assum:arriv-dropouts-2} holds with $G(t)=t$ on $[0,1]$, and $n^{1/2}(G^{(n)}(t) - t) \to \hat G(t)$ uniformly on compact sets of $[0,\infty)$ as $n\to\infty$. Then, using the fact that $B^{\textnormal{br},n}_t \stackrel{D}{=} \tilde B^n_t - t \tilde B_1^n$ for a standard Brownian motion process $\tilde B^n$ and ~\cite[Theorem 8.5.2]{Ok2013} it follows that
	\begin{align*}
		\frac{1}{\sqrt{np}} \lp H_n(t) -npt \rp &{=} 
		\dfrac{1}{\sqrt{np}} \lp np\lp G(t) + r_n(G) \rp + \sqrt{n} \lp p B_{G(t)}^{\textnormal{br},n} + \sqrt{p(1-p)} \hat{B}_{G(t)} \rp -npt \rp \\
		&= \sqrt{np} \lp G^{(n)}(t) - t \rp + \sqrt{p} O(1) + \dfrac{1}{\sqrt{p}} \lp p B_{G(t)}^{\textnormal{br},n} + \sqrt{p(1-p)} \hat{B}_{G(t)} \rp\\ 
		&\stackrel{D}{=} \sqrt{np} \lp G^{(n)}(t) - t \rp + \sqrt{p} O(1) + (\sqrt{p} + \sqrt{1-p}) \int_0^t \sqrt{G'(s)} d\bar{B}_s - \sqrt{p} G(t) \bar{B}_1,\\
		&= \sqrt{p}\hat{G}(t) + \sqrt{p}O(1) + \lp \sqrt{p} + \sqrt{1-p} \rp \bar{B}_t - \sqrt{p} t Z.
	\end{align*}
	where $\bar{B}$ is a standard Brownian motion process. Theorem~\ref{thm:prelim-1} immediately shows that, for the arrival process $A_n(t)$ at fixed $t\in[0,1]$, $(np)^{-1/2}(A_n(t) - npt)\Rightarrow \sqrt{p} \hat G(t) + (\sqrt p + \sqrt{(1-p)}) \bar B_t  + \bar Z$ as $n\to\infty$, where $\bar Z$ is a Gaussian random variable with mean $O(\sqrt p)$ and standard deviation $\sqrt{p} t $. This is reminiscent of the limit in~\cite[Theorem 3.1]{Wh2016}, and shows that our framework can recover a separation of the macroscopic time-dependencies and the mesoscopic stochasticity. The setting in~\cite{Wh2016} is important as it forms the basis for a whole series of works around nonstationary queueing models (see the survey~\cite{Wh2018}). We also note that a more rigorous weak limit analysis for a specific choice of $G^{(n)}$ is presented in~\cite{GlHo2017}.
\end{remark}

Our next major result proves strong embeddings for the workload process. In particular, for the cumulative load to the system we have the following result, which follows from the forthcoming Propositions~\ref{prop:workload} and \ref{prop:vw-sa}. 
\begin{theorem}\label{thm:prelim-2}
Along with the Brownian motion $\hat{B}$ and Brownian bridge $B^{\textnormal{br},n}$ as considered in Theorem~\ref{thm:prelim-1}, there exists a Brownian motion $B$ such that if $R_n$ be defined as:
{$$
R_n(t) =  \sqrt{n} \si B_{p G(t)} + \mu {H}_n(t)
$$}
then 
$$
\sup_{t \in [0, \infty)} \lln W_n(t) - R_n(t) \rrn \stackrel{\textnormal{a.s.}}{=} O \lp n^{1/4} \sqrt{\log n} \rp.
$$
Let $\phi$ be the reflection map functional given by
$
\phi(f)(t) := f(t) - \inf_{u \leq t} f(u).
$
Then the total remaining workload at time $t$ can be expressed as $\phi(W_n - c_n \cdot \textnormal{id})(t)$ and this satisfies:
$$
\sup_{t \in [0, \infty)} \lln \phi(W_n - c_n \cdot \textnormal{id})(t) - \phi(R_n - c_n \cdot \textnormal{id})(t) \rrn \stackrel{\textnormal{a.s}}{=} O \lp n^{1/4} \sqrt{\log n} \rp,
$$
where $\textnormal{id}:x \mapsto x$ is the identity map.
\end{theorem}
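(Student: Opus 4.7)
The plan is to first prove the strong embedding $\sup_t |W_n(t) - R_n(t)| = O(n^{1/4}\sqrt{\log n})$ for the cumulative load, and then transfer that bound to the reflected workload by exploiting the Lipschitz property of the one-sided reflection map. The workload $W_n(t)=\sum_{i=1}^{A_n(t)}V_i$ is a random sum driven by two essentially independent sources of randomness: the arrival count $A_n(t)$, which is already embedded by Theorem~\ref{thm:prelim-1}, and the i.i.d.\ service times $V_i$, to which the classical KMT partial-sum embedding applies thanks to the exponential-moment hypothesis in Assumption~\ref{assum:1}.

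For the cumulative load, the first step will be to invoke the KMT strong approximation for partial sums with a finite moment generating function hypothesis to construct, on the same probability space as the arrival embedding, a standard Brownian motion $B^V$ with
\[
\max_{1\leq k\leq n}\lln \sum_{i=1}^k V_i - \mu k - \sigma B^V_k \rrn = O(\log n) \quad \text{a.s.},
\]
so that $W_n(t) = \mu A_n(t) + \sigma B^V_{A_n(t)} + O(\log n)$. Defining $B_s := n^{-1/2}B^V_{ns}$ yields a Brownian motion satisfying $\sqrt n\, \sigma B_{pG(t)} = \sigma B^V_{npG(t)}$, which will be the $B$ appearing in the statement. Rewriting,
\[
W_n(t) - R_n(t) = \mu\bigl(A_n(t) - H_n(t)\bigr) + \sigma\bigl(B^V_{A_n(t)} - B^V_{npG(t)}\bigr) + O(\log n).
\]
Theorem~\ref{thm:prelim-1} immediately handles the drift term at the desired rate and also implies $\sup_t|A_n(t) - npG(t)| = O(\sqrt n)$ almost surely, since the Brownian and Brownian bridge terms inside $H_n$ contribute an a.s.\ bounded sup on $[0,1]$. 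The final step is to apply L\'evy's modulus of continuity to $B^V$ on $[0, Cn]$ (an a.s.\ bound for the range of both indices) with displacement $\delta = O(\sqrt n)$; this produces an increment bound of order $\sqrt{\sqrt n\,\log n} = n^{1/4}\sqrt{\log n}$, which dominates the remaining error and gives the first claim.

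The second claim is a soft consequence of the Lipschitz property of the one-sided reflection: for any $f,g$ one has $|\phi(f)(t) - \phi(g)(t)| \leq 2 \sup_{u \leq t}|f(u) - g(u)|$, since the infimum functional is nonexpansive in the uniform norm. Applied with $f = W_n - c_n\cdot\textnormal{id}$ and $g = R_n - c_n\cdot\textnormal{id}$, the deterministic $c_n\cdot\textnormal{id}$ contribution cancels, and the sup-norm bound from the first part is inherited verbatim, producing the $O(n^{1/4}\sqrt{\log n})$ rate for the reflected difference on all of $[0,\infty)$.

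The main obstacle I expect lies in the coupling: placing $B^V$ on the same probability space as the Brownian motion and Brownian bridge underlying Theorem~\ref{thm:prelim-1} while preserving their independence and marginal laws, and ensuring that the three embedding errors can be controlled simultaneously on a single almost sure event. A secondary but nontrivial subtlety is that the suprema are taken over all of $[0,\infty)$; this will require observing that $A_n(t)$ is eventually constant at the random total number of arrivals (bounded by $n$) and that $H_n(t)$ stabilizes beyond the effective support of $G$, so the L\'evy modulus argument need only be invoked on an almost surely bounded initial segment, though the bookkeeping demands care.
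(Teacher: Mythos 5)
Your overall architecture matches the paper's: KMT for the partial sums of the $V_i$ evaluated at the random index $A_n(t)$ (this is exactly Proposition~\ref{prop:1}), the arrival embedding of Theorem~\ref{thm:prelim-1} for the drift term, and the nonexpansiveness of the infimum map (the paper's Lemma~\ref{lem:ana-result-1}, used in Proposition~\ref{prop:vw-sa}) for the reflected workload. The second half of your argument is therefore fine. The problem is the term $\sigma\bigl(B^V_{A_n(t)}-B^V_{npG(t)}\bigr)$, where you and the paper genuinely diverge, and where your argument has a gap.

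You claim $\sup_t\lvert A_n(t)-npG(t)\rvert=O(\sqrt n)$ almost surely ``since the Brownian and Brownian bridge terms inside $H_n$ contribute an a.s.\ bounded sup.'' That is not justified: the Brownian bridge $B^{\textnormal{br},n}$ is a \emph{different} bridge for each $n$ (see Remark~\ref{rem:bridge-diff-n}), so $\sup_{t}\lvert B^{\textnormal{br},n}_{G(t)}\rvert$ is not almost surely bounded uniformly in $n$; via Borel--Cantelli (or directly via DKW for $G_n$ and Bernoulli concentration for the $\zeta_i$) the best elementary almost sure bound is $\sup_t\lvert A_n(t)-npG(t)\rvert=O(\sqrt{n\log n})$ (an LIL argument improves this only to $O(\sqrt{n\log\log n})$). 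Feeding $\delta_n=O(\sqrt{n\log n})$ into the large-increment/modulus bound for $B^V$ on $[0,Cn]$ yields $O\bigl((n\log n)^{1/4}\sqrt{\log n}\bigr)=O\bigl(n^{1/4}(\log n)^{3/4}\bigr)$, which misses the stated rate $O(n^{1/4}\sqrt{\log n})$. The paper avoids exactly this loss: instead of converting the time-change displacement into an almost sure bound and then invoking a modulus of continuity, it proves the DKW-style inequality for the time-changed Brownian motion (Proposition~\ref{prop:3}, applied through Corollary~\ref{cor:1} inside Proposition~\ref{prop:workload}), based on Borell--TIS plus a metric-entropy/chaining bound over the jump structure of $A_n/n$. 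There the relevant scale is the \emph{expected} displacement $\ups_n^2=\sup_s\bex\lvert A_n(s)/n-pG(s)\rvert=O(n^{-1/2})$, which is an order $\sqrt{\log n}$ smaller than the a.s.\ displacement, and this is precisely how the $n^{1/4}\sqrt{\log n}$ rate (with the exponential tail $e^{-\lambda x^2\sqrt n}$ needed for the final Borel--Cantelli step) is obtained. To repair your proof at the stated rate you would need to replace the L\'evy-modulus step by such a conditional/Gaussian-concentration argument, i.e.\ essentially reprove Proposition~\ref{prop:3}.
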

Finally Theorems~\ref{thm:prelim-1} and \ref{thm:prelim-2} are used to prove a strong embedding for the queue length process, $Q_n$ that includes both any customer in service and all waiting customers. Recall that the queue length $Q_n(t)$ at time $t$ is the difference between the number of arrivals and the number of job completions before time $t$. Denoting by $D_n(t)$ the amount of time the queue stays busy till time $t$, the queue length can be expressed as:
\beq\label{eq:q-length}
Q_n(t) = A_n(t) - M_n(D_n(t)),
\eeq
Finally, the idle time process of the server is given by
\beq\label{eq:idle}
I_n(t) := t-D_n(t).
\eeq
 The following theorem is a consequence of Proposition~\ref{prop:queue-length}.
\begin{theorem}\label{thm:prelim-3}
Let $B$, $\hat{B}$ be the Brownian motions  $B^{\textnormal{br},n}$ the Brownian bridge processes as considered in Theorems ~\ref{thm:prelim-1} and \ref{thm:prelim-2}. Let 
$$
X_n(t) = H_n(t) - \dfrac{c_n t}{\mu} + \sqrt{n} \dfrac{\si}{\mu} B_{{E}_n(t)},
$$
where 
$$
{E}_n(t) =
\begin{cases}
\dfrac{c_n t}{n \mu} + \inf_{s \leq t} \lp pG(s) - \dfrac{c_n}{n}\dfrac{s}{\mu} \rp,~&\text{under Assum.~\ref{assum:arriv-dropouts}},\\
\dfrac{c_n t}{n \mu} + p r_n(G) + \inf_{s \leq t} \lp pG(s) - \dfrac{c_n}{n}\dfrac{s}{\mu} \rp,~&\text{under Assum.~\ref{assum:arriv-dropouts-2}}.
\end{cases} 
$$
Then the queue length $Q_n(t)$ satisfies:
$$
\sup_{t \in [0, \infty)} \lln Q_n(t) - \phi(X_n)(t) \rrn \stackrel{\textnormal{a.s.}}{=} O \lp n^{1/4} \sqrt{\log n} \rp,
$$
if $c_n = O(n^p)$ for some $p>0$ and $\liminf_n c_n > 0$. Else we have
$$
\sup_{t \in [0, \infty)} \lln Q_n(t) - \phi(X_n)(t) \rrn \stackrel{\textnormal{a.s.}}{=} O \lp n^{1/4} \sqrt{\log c_n} \rp.
$$
\end{theorem}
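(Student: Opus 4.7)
\textbf{Proof sketch for Theorem~\ref{thm:prelim-3}.}

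The high-level plan is to represent $Q_n$ as the Skorokhod reflection $\phi$ of a suitable netput process, approximate that netput uniformly by $X_n$, and invoke the $2$-Lipschitz property of $\phi$ in the uniform norm. Starting from $Q_n(t) = A_n(t) - M_n(D_n(t))$ and $D_n(t) = t - I_n(t)$, I rewrite
$$
Q_n(t) = \lc A_n(t) - \frac{c_n t}{\mu} - \lp M_n(D_n(t)) - \frac{c_n D_n(t)}{\mu}\rp \rc + \frac{c_n I_n(t)}{\mu}.
$$
Because $c_n I_n(\cdot)/\mu$ is nondecreasing, vanishes at $0$, and grows only on $\{Q_n = 0\}$, Skorokhod uniqueness identifies $Q_n$ with $\phi$ applied to the bracketed netput. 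The task is then to control the sup distance between this netput and $X_n$.

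The first ingredient is the classical KMT embedding for the iid service sequence $\{V_i\}$: the MGF hypothesis in Assumption~\ref{assum:1} produces the Brownian motion $B$ already fixed in Theorem~\ref{thm:prelim-2} with $\sup_{k\le n}|S_k - k\mu - \sigma \sqrt{n} B_{k/n}| = O(\log n)$ almost surely. Inverting at $S_{M_n(s)}\approx c_n s$ and using Proposition~\ref{prop:3} to trade the pure-jump argument $M_n(s)/n$ for the continuous argument $c_n s/(n\mu)$ in the Brownian term yields, uniformly in $s$ with $c_n s/\mu\le n$,
$$
M_n(s) = \frac{c_n s}{\mu} - \sqrt{n}\,\frac{\si}{\mu}\,B_{c_n s/(n\mu)} + O\!\lp n^{1/4}\sqrt{\log n}\rp.
$$
The $\log c_n$ variant of the rate in the theorem statement is the natural KMT remainder over a time window of length $c_n s/\mu$, and reduces to $\log n$ whenever $c_n$ is polynomially bounded in $n$.

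The second, and main, step substitutes $s = D_n(t)$ and replaces the still-random argument $c_n D_n(t)/(n\mu)$ by the deterministic fluid time $E_n(t)$ in the Brownian term. A priori closeness of these two time changes is obtained from the busy-time identity
$$
c_n D_n(t) = W_n(t) - \phi(W_n - c_n\,\textnormal{id})(t),
$$
which expresses busy time entirely through the workload-side objects already controlled by Theorem~\ref{thm:prelim-2}; together with $2$-Lipschitzness of $\phi$ this gives $\sup_t|c_n D_n(t)/(n\mu) - E_n(t)| = O(n^{-1/2})$ a.s. A second application of Proposition~\ref{prop:3} — whose DKW-type inequality for Brownian motion time-changed by a jump-driven process is purpose-built for exactly this random-to-deterministic substitution — then yields $\sup_t \sqrt{n}\,|B_{c_n D_n(t)/(n\mu)} - B_{E_n(t)}| = O(n^{1/4}\sqrt{\log n})$ a.s.

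Combining these two approximations with Theorem~\ref{thm:prelim-1}'s embedding $\sup_t|A_n(t) - H_n(t)| = O(n^{1/4}\sqrt{\log n})$ shows that the netput differs from $X_n$ by $O(n^{1/4}\sqrt{\log n})$ (respectively $O(n^{1/4}\sqrt{\log c_n})$ in the non-polynomial regime for $c_n$), and the $2$-Lipschitz property of $\phi$ transfers this bound to $|Q_n - \phi(X_n)|$. The principal obstacle is the random time-change step: $D_n$ is itself a functional of the queue dynamics we are trying to approximate, so any attempt to bootstrap the estimate through $Q_n$ directly would be circular. The workload identity together with Theorem~\ref{thm:prelim-2} sidesteps this circularity cleanly, supplying the $O(\sqrt n)$ a priori control on the time change that Proposition~\ref{prop:3} needs to then deliver the final $O(n^{1/4}\sqrt{\log n})$ bound.
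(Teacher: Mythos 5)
Your outline is essentially sound and parallels the paper's proof of Proposition~\ref{prop:queue-length}: you represent $Q_n$ as $\phi$ of the netput via the non-idling/Skorokhod uniqueness argument (the paper uses $\tfrac{c_n}{\sqrt n\mu}I_n(t)=-\inf_{s\le t}Y_n(s)$), use Proposition~\ref{prop:1} (KMT for the $V_i$) to replace $c_nD_n(t)/\mu-M_n(D_n(t))$ by $\sqrt n\,(\si/\mu)$ times a Brownian term, replace the random time change by the deterministic $E_n$, and finish with Proposition~\ref{prop:dropout-strong} for the arrival part and the Lipschitz property of $\phi$ from Lemma~\ref{lem:ana-result-1}. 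Where you genuinely diverge is the a priori control of the busy time: you obtain $\sup_t\lln c_nD_n(t)/(n\mu)-E_n(t)\rrn$ small from the workload identity $c_nD_n=W_n-\phi(W_n-c_n\cdot\textnormal{id})$ together with Theorem~\ref{thm:prelim-2}, whereas the paper stays entirely on the queue side, combining $Y_n\approx\tilde Y_n$ with a direct Gaussian-tail fluid estimate of $\tilde Y_n/\sqrt n-\tilde E_n$ (see \eqref{eq:X-tilde_DKW} through \eqref{eq:cnDn/n-approx-2}). Your route is arguably cleaner, at the cost of making the queue-length embedding depend on the workload FSAT; the paper's version is self-contained within Section 6. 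Both deliver the needed $O(n^{-1/2})$-type control without circularity.

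Two steps need patching before your sketch is a proof. First, your ``second application of Proposition~\ref{prop:3}'' is to the time change $\Xi_n(t)=c_nD_n(t)/(n\mu)$, which is continuous and piecewise linear, not a pure jump process, so Assumption~\ref{assum:tBM-2} (and the specific DKW form in Assumption~\ref{assum:tBM-3}) is not literally satisfied. Either adapt the entropy bound in the proof of Proposition~\ref{prop:3} using the deterministic $1$-Lipschitz bound $\lln D_n(t)-D_n(s)\rrn\le\lln t-s\rrn$ (which in fact simplifies the covering argument), or do as the paper does: keep the pure-jump argument $M_n(D_n(t))/n$, verify its DKW bound by combining the busy-time estimate with Proposition~\ref{lem:tBM-satisfy-2}, and only then invoke Proposition~\ref{prop:3}. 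Second, you gloss the truncation that generates the two regimes in the statement: the uniform bounds for $M_n$ and the time-changed Brownian motion are proved on a deterministic window $[0,L_n]$, one must restrict to $\{S_n/c_n\le L_n\}$ up to an exponentially small event (Lemma~\ref{lem:S_n/c_n}), and it is the choice $L_n=n$ inside the factor $\sqrt{\log((c_nL_n/(n\mu))\vee n)}$ that gives $\sqrt{\log n}$ for polynomially bounded $c_n$ and $\sqrt{\log c_n}$ otherwise --- not a KMT remainder over a long service window, as your aside suggests. With these repairs, plus a Borel--Cantelli pass converting the exponential probability bounds into the stated almost sure rates (needed because the coupled versions and the Brownian bridge change with $n$), your argument yields the theorem.
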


\begin{remark}
Observe that the queue length spends more time near zero as the server efficiency becomes super polynomial in $n$, resulting in a greater approximation error.
\end{remark}

\begin{remark}
	Theorems~\ref{thm:prelim-2} and~\ref{thm:prelim-3} show that the scaled workload process $Z_n := \phi(W_n - c_n \cdot \textnormal{id})/n$ and the scaled queue length process $Q_n/n$ are both closely approximated by nonstationary reflected Brownian motion (RBM) processes on a sample path basis. These theorems also imply the results in~\cite{HoJaWa2016a,HoJaWa2016b,BeHoLe2019} where functional strong laws and central limit theorems were proved for the scaled processes when $p=1$. More importantly, the proofs of Propositions~\ref{prop:workload},~\ref{prop:vw-sa}~and~\ref{prop:queue-length} show that the spatial scale of the processes
\end{remark}

\section{Strong Embeddings: A Primer}\label{sec:primer}
Let $X_1, X_2, \ldots$ be iid random variables from a distribution with mean $0$ and variance $1$. Let $S_n = \sum_{i=1}^n X_i$ denote the $n^{\textnormal{th}}$ partial sum. Then the classical central limit theorem states that:
\beq\label{eq:clt}
\bp \lp \dfrac{S_n}{\sqrt{n}}  \leq y \rp \longrightarrow \Phi(y)~\text{as}~n \to \infty, 
\eeq
where $\Phi$ is the central normal cdf. Equation~\eqref{eq:clt} states that the distribution of $\frac{S_n}{\sqrt{n}}$ approaches that of a standard normal as $n \to \infty$. A stochastic process analog of \eqref{eq:clt} was proved in \cite{donsker}. Let the stochastic process $\{S_n(t); t\in [0,1]\}$ be constructed as follows for each $n \in \N$:
\beq\label{eq:partial-sum-t}
S_n(t) = \dfrac{1}{\sqrt{n}} \lp S_{[nt]} + X_{[nt]+1} + \lp nt - [nt] \rp \rp.
\eeq
Then $\{ S_n(t), t\in [0,1] \}$ converges in distribution to $\{ B(t), t\in [0,1] \}$ as $n \to \infty$, where $B$ is a standard Brownian motion. More precisely,
\beq\label{eq:weak-conv}
h(S_n) \stackrel{d}{\longrightarrow} h(B),
\eeq
for every continuous functional $h:C(0,1) \to \R$. Heuristically, equations~\eqref{eq:partial-sum-t} and \eqref{eq:weak-conv} imply that for $n$ large enough $S_{[nt]} + X_{[nt+1]} (nt - [nt])$ is close in distribution to $\sqrt{n}B_t$. Utilizing the scaling property of Brownian motion and observing that $X_{[nt+1]}$ is negligible compared to $S_{[nt]}$ (for large $
n$), we can concur that $S_k$ is approximately close to $B_k$ for all $k \in \{1,\ldots,n\}$. A bound on the difference of the two was provided in \cite{strassen}, who showed the existence of a probability space containing versions of all associated random variables and processes such that:
\beq\label{eq:strassen}
\dfrac{S_k - B_k}{\sqrt{n \log \log n}} \stackrel{\textnormal{a.s.}}{\longrightarrow}0,~\text{as}~k \to \infty.
\eeq
Equation~\eqref{eq:strassen} can be restated in the following form:
\beq\label{eq:strassen-sup}
\sup_{0 \leq t \leq 1} \dfrac{S_n(t) - \frac{1}{\sqrt{n}}B_{nt}}{\sqrt{\log \log n}} \stackrel{\textnormal{a.s.}}{\longrightarrow} 0.
\eeq

A close associate of the partial sums $S_n$ are the empirical distribution functions corresponding to a sample of iid random variables. Consider for simplicity a random sample $U_1, U_2, \ldots$ of iid $U[0,1]$ random variables. The empirical cdf is then given by:
$$
F_n(t) = \dfrac{1}{n} \sum_{i=1}^n \mathbf{1}_{\{U_i \leq t\}}, ~t \in [0,1].
$$
Observe that the random quantities $\mathbf{1}_{\{U_i \leq t\}}$ are iid with mean $t$ and variance $t(1-t)$. After proper scaling, and considering our previous discussion we expect the empirical process $\al_n$ given by:
$$
\al_n (t) = \sqrt{n} \lp F_n(t) - t \rp,
$$
to be close to a normal random variable with variance $t(1-t)$. We also expect a convergence result akin to \eqref{eq:weak-conv} in the process level. Recall that the standard Brownian bridge $B^{\textnormal{br}}$ is a stochastic process that may be defined as:
$$
B_t^{\textnormal{br}} = B_t - tB_1,~t \in [0,1],
$$
for a Brownian motion $B$. Since $B^{\textnormal{br}}$ is a Gaussian process and $\textnormal{Var}(B_t^{\textnormal{br}}) = t(1-t)$, $B^{\textnormal{br}}$ is a possible candidate for the stochastic process approximating the empirical process. Indeed this was proved to be true in a result analogous to \eqref{eq:strassen-sup} in \cite{brillinger}, who showed the existence of a probability space containing versions of all associated random variables and processes such that:
\beq\label{eq:brillinger}
\sup_{0 \leq t \leq 1} \lln \al_n(t) - B_t^{\textnormal{br}} \rrn \stackrel{a.s.}{=} O \lp {\lp \dfrac{\log n}{n} \rp}^{1/4} {\lp \log n \log \log n \rp}^{1/4} \rp.
\eeq
This result immediately implies the analogue to \eqref{eq:weak-conv}, i.e., $\{ \al_n(t), t\in [0,1] \}$ converges in distribution to $\{ B_t^{\textnormal{br}}, t\in [0,1] \}$.

Equations~\eqref{eq:strassen-sup} and \eqref{eq:brillinger} are insightful and provide a rate of convergence of the partial sums and the empirical processes. However, these are not the best rates of convergence one can achieve. It was shown by \kmt in  \cite{KMT}, that when $X_i$ is allowed to have a finite moment generating function in a neighborhood of $0$:
\beq
\sup_{1\leq k \leq n} \lln S_k - B_k \rrn = O \lp \log n \rp.
\eeq
A similar rate is enjoyed by the empirical processes of uniforms. These two results are stated below in Theorems~\ref{thm:KMT-part-sum} and \ref{thm:KMT-emp-pro}, along with the novel construction (also known as  the Hungarian method) of $X_i$'s and $U_i$'s from the Brownian motion and Brownian bridge respectively. A new and different approach in proving such embedding results has been provided in \cite{chatterjee} for the simple symmetric random walk. We will use the terminology \emph{strong embedding} for coupling an arbitrary random variable $W$ with a Gaussian random variable $Z$ so that $W-Z$ has exponentially decaying tails at the appropriate scale. Theorems~\ref{thm:KMT-part-sum} and \ref{thm:KMT-emp-pro} thus provide strong embeddings to the partial sums $S_n$ and the empirical processes $\al_n$. As alluded to in the Introduction we will apply these results to obtain strong embeddings for the performance metrics of a $\textnormal{RS}(G,p)/G/1$ queue.
 
 \subsection{Strong embedding of the random walk}
 We present the KMT theorem for the strong embedding of the random walk. Proof ideas and construction can be found in the Appendix.
\begin{theorem}\label{thm:KMT-part-sum}
Let $F$ be a distribution function with mean $0$ and variance $1$. In addition, suppose the moment generating function corresponding to $F$, $R(t) = \E (e^{tX})$, $X \sim F$, exists in a neighborhood of $0$. Then, given a Brownian motion $B$, and using it, one can construct a sequence of random variables $X_1, X_2, \ldots$ which are independent and identically distributed to $F$. Furthermore, the partial sums of $X_i$'s are strongly coupled to the Brownian motion $B$ in the following sense. For every $n \in \N$ and $x > 0$:
\beq\label{eq:strong-approx-1}
\bp \lp \sup_{1\leq k\leq n} \lln \sum_{i=1}^n X_i - B_n \rrn > C \log n + x \rp < K e^{-\la x},
\eeq
where $C$, $K$ and $\la$ are positive constants depending only on $F$.
\end{theorem}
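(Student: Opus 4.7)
The plan is to follow the classical Hungarian construction, which builds the sequence $X_1, X_2, \ldots$ from the Brownian motion $B$ via a dyadic refinement scheme. First I would reduce to the case $n = 2^m$, since the general case then follows by embedding into the next power of two and handling the residual linear interpolation at negligible cost. The construction samples $B$ at the dyadic times $k \cdot 2^{m-j}$ for $j = 0, 1, \ldots, m$ and builds the partial sums $S_k$ simultaneously, level by level, so that the discrepancy $S_k - B_k$ is controlled at every dyadic node.

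The central ingredient is a \emph{quantile coupling lemma}: if $Y$ is the sum of $\ell$ iid copies of $X \sim F$ and $Z \sim \mathcal N(0,\ell)$, one can couple them via the monotone transport $Z = \sqrt{\ell}\, \Phi^{-1}(F_Y(Y))$ so that
\[
\mathbb P\bigl( |Y - Z| > C \log \ell + x \bigr) < K e^{-\lambda x}, \qquad x > 0,
\]
with constants depending only on $F$. The proof of this lemma rests on sharp Edgeworth/saddle-point expansions of the density of $Y$ valid in the moderate deviation window $|y| \leq c\sqrt{\ell}\log \ell$; this is precisely where the hypothesis that the MGF exists in a neighborhood of zero enters, supplying the Cramér tilt needed to control the ratio of the true density to the normal density uniformly on that window.

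To lift this one-step coupling to a joint coupling of all partial sums with $B$, one proceeds recursively down the dyadic tree. At the root, couple $(S_n, B_n)$ via the lemma. Conditionally on $(S_n, B_n)$, couple the midpoint pair $(S_{n/2}, B_{n/2})$: the conditional law of $B_{n/2}$ given $B_n$ is Gaussian with variance $n/4$, while the conditional law of $S_{n/2}$ given $S_n$ is that of a sum of $n/2$ iid copies of $X$ constrained to a fixed total. A conditional version of the same Edgeworth analysis yields a coupling of these two conditional laws with error $O(\log n)$ and exponential tail. Iterating, at level $j$ one introduces $2^j$ conditionally independent couplings at scale $n/2^j$. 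The partial sum $S_k$ at an arbitrary $k$ is recovered from the values at the dyadic ancestors of $k$, and $S_k - B_k$ telescopes into a weighted sum of the per-level conditional coupling errors along the path from $k$ to the root. Combining the per-level exponential tails with a union bound over the $2^j$ cells at level $j$ then yields $\sup_{1 \leq k \leq n}|S_k - B_k| \leq C \log n + x$ with probability at least $1 - K e^{-\lambda x}$.

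Main obstacle: The hardest step is the quantile coupling lemma itself, namely obtaining the $O(\log \ell)$ bound throughout the entire moderate-deviation window rather than the crude Berry--Esseen $O(\ell^{1/4})$ that cruder arguments would give. This requires sharp saddle-point estimates for the density of $Y$ together with uniform control of the error term in the Edgeworth expansion, both of which depend essentially on the existence of the MGF in a neighborhood of zero. A secondary but equally delicate difficulty is the final bookkeeping in the dyadic telescoping: one must argue that summing level-$j$ errors across $m = \log_2 n$ levels does not inflate the bound to $O((\log n)^2)$, which relies on the conditional coupling errors concentrating around $O(1)$-size typical values with exponential tails, rather than having $O(\log \ell)$ typical size.
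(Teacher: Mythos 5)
Your proposal is correct and follows essentially the same route as the paper's proof sketch: the dyadic (Hungarian) refinement, the quantile/monotone-transport coupling at each scale, the recursive conditional coupling down the binary tree, and the use of the moment generating function to obtain exponential tails. Your observation that the per-level coupling errors must concentrate at $O(1)$ with exponential tails rather than having $O(\log\ell)$ typical size — so that summing over $\log_2 n$ levels does not degrade the bound to $O((\log n)^2)$ — is a real and correctly identified subtlety, handled in the paper by citing Lemma~1 of \cite{KMT} and a Chernoff argument.
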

\begin{comment}
Theorem~\ref{thm:KMT-part-sum} has a rather strict assumption on the distribution of the increments of the random walk. The following theorem taken from \cite{KMT} provides a strong approximation result under a less restrictive assumption. Namely, the following theorem provides a strong approximation for a random walk when the increments have a finite $p$-moment for some $p>2$. However this comes at the price of loss in exponentially decreasing upper bounds on the probability. 
\end{comment}

\subsection{Strong embedding of the empirical process}
We present the strong embedding result for the empirical process. Proof ideas and construction can be found in the Appendix.
\begin{theorem}\label{thm:KMT-emp-pro}
There exists a probability space with independent $U[0,1]$ random variables  $U_1, U_2, \ldots$ and a sequence of Brownian bridges $B_1^{\textnormal{br}}, B_2^{\textnormal{br}}, \ldots$ such that for all $n \geq 1$ and $x \in \R$:
\beq\label{eq:strong-approx-2}
\bp \lp \sup_{s \in [0,1]} \sqrt{n} \lln \al_n(s) - B_n^{br}(s) \rrn > C \log n + x \rp < K e^{-\la x},
\eeq
{for some constants $C$, $K$ and $\la$.} Here the empirical process $\al_n$ is given by
$$
\al_n (s) = \sqrt{n} (F_n(s) - s), 
$$
and 
$$
F_n(s) = \dfrac{1}{n} \sum_{i=1}^n \mathbf{1}_{\{U_i \leq s\}}.
$$
\end{theorem}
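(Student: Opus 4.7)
The plan is to follow the classical \kmt Hungarian construction, building the uniform sample $U_1,\ldots,U_n$ from a given Brownian bridge $B_n^{\textnormal{br}}$ through a dyadic scheme so that the empirical process tracks the bridge at every dyadic resolution, with errors that telescope only logarithmically. Fix $n$ and write $F_n$ for the empirical c.d.f. of the $U_i$'s. For each $k = 0, 1, \ldots, \lceil \log_2 n \rceil$ and $j = 0,\ldots,2^k$, consider the dyadic points $t_{k,j} := j 2^{-k}$. I will first couple the values of $nF_n(t_{k,j})$ to $n t_{k,j} + \sqrt{n} B_n^{\textnormal{br}}(t_{k,j})$ at all dyadic points, and then propagate the bound to arbitrary $s\in[0,1]$ using the monotonicity of $F_n$ and the uniform continuity of the Brownian bridge on a fine mesh.

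The recursive step is the heart of the argument. At the top level $nF_n(1)=n$ is deterministic. At level $k+1$, suppose $nF_n(t_{k,j-1})$ and $nF_n(t_{k,j})$ are already constructed, so that the number $m$ of $U_i$'s landing in the parent interval $[t_{k,j-1},t_{k,j}]$ is known; conditional on $m$, the count $nF_n(t_{k+1,2j-1}) - nF_n(t_{k,j-1})$ is distributed as $\textnormal{Bin}(m,1/2)$ by the uniform-conditional symmetry. I would construct this binomial variable by the quantile transformation from the conditionally Gaussian increment of the Brownian bridge on the same subinterval, namely $\tfrac12(B_n^{\textnormal{br}}(t_{k,j}) - B_n^{\textnormal{br}}(t_{k,j-1})) \cdot \sqrt{n}$ suitably normalized. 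The key ingredient is the \kmt binomial-Gaussian coupling lemma: if $Y\sim \textnormal{Bin}(m,1/2)$ and $Z\sim N(0,1)$ are linked by quantile transform, then
\beq\label{eq:binom-normal}
\Bigl| Y - \tfrac{m}{2} - \tfrac{\sqrt{m}}{2}Z \Bigr| \le C_1 + C_2 \frac{Z^2}{\sqrt{m}} \qquad \text{on } \{|Z| \le \eta \sqrt{m}\},
\eeq
with exponential tail control on the complementary event. This is proved through a Taylor expansion of $\Phi^{-1}\circ F_{Y}$ exploiting existence of the binomial m.g.f. in a neighborhood of $0$ and the local limit refinement of the CLT to arbitrary order.

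Equipped with \eqref{eq:binom-normal}, I would denote by $\Delta_{k,j}$ the error at node $(k,j)$ and by $Z_{k,j}$ the standard normals driving the couplings, and write the recursion $\Delta_{k+1,2j-1} \le \Delta_{k,j} + C_1 + C_2 Z_{k,j}^2/\sqrt{n 2^{-k}}$. Summing over $k \le \log_2 n$ gives
\beq\label{eq:accum}
\max_{k,j} |\Delta_{k,j}| \le C_1 \log_2 n + C_2 \sum_{k=0}^{\log_2 n} \frac{1}{\sqrt{n 2^{-k}}} \max_j Z_{k,j}^2,
\eeq
whose tail is exponential in $x$ because the $Z_{k,j}$'s across a dyadic level are independent and there are only $2^k$ of them, while the prefactor $1/\sqrt{n 2^{-k}}$ dampens deeper levels. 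Combined with the exceptional-event tails from \eqref{eq:binom-normal}, this yields
$$
\bP\Bigl( \max_{k,j} |nF_n(t_{k,j}) - n t_{k,j} - \sqrt{n} B_n^{\textnormal{br}}(t_{k,j})| > C\log n + x \Bigr) < K e^{-\la x}.
$$

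Finally, for arbitrary $s \in [0,1]$ I would sandwich $s$ between two consecutive dyadic points at depth $\lceil \log_2 n \rceil$, bound the fluctuation of $nF_n$ on such a sub-interval (of length $\le 1/n$) by a Bernstein-type inequality, and bound the modulus of continuity of $B_n^{\textnormal{br}}$ at scale $1/n$ by the Gaussian modulus $O(\sqrt{(\log n)/n})$; both contribute terms absorbed into the $C \log n + x$ budget. Dividing by $\sqrt{n}$ yields the statement. The main obstacle is the sharp coupling inequality \eqref{eq:binom-normal}, since any looser bound (e.g.\ from a naive Berry-Esseen argument) would inflate the cumulative error in \eqref{eq:accum} to a polynomial in $n$ rather than a logarithm; securing the right dependence on $Z$ requires the m.g.f.\ hypothesis in its full strength through a higher-order expansion of the inverse binomial c.d.f.
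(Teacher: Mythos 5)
Your outline follows the same classical Hungarian construction that the paper sketches in its appendix: standard normals are read off the bridge as normalized second differences over dyadic intervals, the conditional $\mathrm{Bin}(m,1/2)$ counts are generated from them by quantile transforms, the errors are accumulated down the dyadic tree via a Tusn\'ady-type binomial--normal coupling, and non-dyadic points are handled by monotonicity of $F_n$ plus the modulus of continuity of the bridge. (The paper itself gives only this construction and defers the error analysis to the cited references.) However, two steps of your error analysis are not right as stated, and they are exactly where the real work of the KMT argument lies.

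First, your recursion $\Delta_{k+1,2j-1}\le \Delta_{k,j}+C_1+C_2 Z^2/\sqrt{n2^{-k}}$ drops two contributions: the child error involves the average of the two parent-endpoint errors, and, more importantly, the coupling produces the term $\tfrac{\sqrt{m}}{2}Z$ with the \emph{random} parent count $m$, while the bridge increment carries the deterministic factor $\tfrac{\sqrt{n2^{-k}}}{2}$; the mismatch $\tfrac{|Z|}{2}\lln\sqrt{m}-\sqrt{n2^{-k}}\rrn$ must be controlled through the error already accumulated at the parent level, and this feedback loop is one of the delicate points of the proof. Second, the coupling inequality you invoke, with error $C_1+C_2Z^2/\sqrt{m}$, is stronger than Tusn\'ady's lemma, whose error is $C_1+C_2Z^2$ (known refinements give $C_1+C_2|Z|^3/\sqrt{m}$ on $\{|Z|\le\eta\sqrt{m}\}$, not the quadratic-over-$\sqrt m$ form), and your level-wise bookkeeping genuinely needs that extra damping: with the standard bound, $\sum_{k\le\log_2 n}\max_j Z_{k,j}^2$ is of order $(\log n)^2$, which blows the $C\log n$ budget. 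The standard repair is to sum the undamped coupling errors along each root-to-leaf path (only $O(\log n)$ independent terms per path), get an exponential tail per path by a Chernoff bound for $\sum Z_i^2$, and then union over the $O(n)$ dyadic points, the factor $n$ being absorbed into $e^{-\la(C\log n+x)}$ by enlarging $C$. With those corrections your sketch matches the argument the paper points to; without them the stated accumulation step does not close.
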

{
\begin{remark}\label{rem:KMT-emp-pro-const}
The constants $C$, $K$ and $\la$ in Theorem~\ref{thm:KMT-emp-pro} can be chosen as $C=100$, $K=10$ and $\la= 1/50$. See \cite[Theorem 4.4.1]{CsRe2014} for more details.
\end{remark}
}

\begin{remark}\label{rem:bridge-diff-n}
The KMT construction relies on the generation of a sample of $n$ uniforms $U_1, \ldots, U_n$ from a Brownian bridge $B^{\textnormal{br},n}$. It can be seen from the construction that having obtained $\{U_1, \ldots, U_n\}$, one is unable to obtain another $U_{n+1}$ such that the new set $\{U_1, \ldots, U_{n+1}\}$ satisfies \eqref{eq:strong-approx-2} with the same Brownian bridge. Instead it would be necessary to redo the construction. This necessitates the need for a different Brownian bridge $B^{\textnormal{br},n}$ for every $n$.
\end{remark}

\section{Control of Time-changed Brownian motion}\label{sec:bm}
Our analyses in subsequent sections provide strong embedding results for several queue length characteristics to corresponding diffusion approximations. In order to achieve those results we need a strong control over the difference between a Brownian motion evaluated at several $n$-level stochastic quantities and their corresponding fluid limits as $n$ goes to infinity (for example, the empirical distribution of arrival epochs against the true arrival distribution). In this section we present general results on bounding the difference between Brownian motion evaluated at some stochastic jump process and its fluid limit. The forthcoming Proposition~\ref{prop:3} is rather general and might be of independent interest. We start by stating an assumption on the fluid limit.
\begin{assumption}\label{assum:tBM-1}
For each $n \geq 1$, let $\xi_n:[0,\infty) \mapsto \R$ be a bounded Lipschitz continuous function; that is, there exists $c_{\xi_n}>0$ such that 
$$
\lln \xi_n(s) - \xi_n(t) \rrn \leq c_{\xi_n} \lln s-t \rrn,
$$
for all $s, t \in [0, \infty)$. 
\end{assumption}
We also impose regularity conditions on the stochastic jump process along which our Brownian motion will be evaluated. These are collected in the following assumption.
\begin{assumption}\label{assum:tBM-2}
Let ${\{T_n\}}_{n \geq 1}$ be a sequence of non-decreasing positive numbers. Let $\Xi_n :=  \{ \Xi_n(s); s \in [0,L_n]\}$ be a stochastic pure jump process defined on a common probability space $(\Omega, \cf, \bp)$ for every $n \geq 1$, such that almost surely the number of its jumps in $[0,L_n]$ is bounded above by $kn^m$ for some fixed positive constants $k$ and $m$. In addition, assume that:
$$
\limsup_{n} \sup_{s \in [0,L_n]} \lcl |\Xi_n(s)|\rcl < \infty.
$$
Denote $D = \limsup_n \sup_s |\Xi_n(s)|$.
\end{assumption}
In order to obtain a non-asymptotic probabilistic bound on the difference $|B_{\Xi_n(s)} - B_{\xi_n(s)}|$, where $\xi_n$ and $\Xi_n$ are introduced in Assumptions~\ref{assum:tBM-1} and \ref{assum:tBM-2} respectively, we impose further conditions on the distribution of $|\Xi_n-\xi_n|$. In particular, we require a Dvoretzky-Kiefer-Wolfowitz (DKW) style inequality~\cite{DKW} for the tail distribution of $|\Xi_n-\xi_n|$.
\begin{assumption}\label{assum:tBM-3}
For every $n \geq 1$, let $\xi_n$ and $\Xi_n$ be as considered in Assumptions~\ref{assum:tBM-1} and \ref{assum:tBM-2}. Let there be constants $k_0$, $k_1$, $k_2$, $k_3$ and $0<\ga<4$ such that the following inequality holds for every $\ep >0$:
$$
\bp \lp \sup_{s \in [0,L_n]} \lln \Xi_n(s) - \xi_n(s) \rrn > \ep + k_0 \dfrac{\log n}{n} \rp \leq k_1 e^{-k_2 n {^{\ga}} \ep^2 \wedge k_3 n^{\ga} \ep}.
$$
\end{assumption}
In addition, denote $\al_n$ by:
\beq\label{eq:al_n}
\al_n :=\dfrac{1}{\sqrt{2}} {\lp \sup_{t \in [0,L_n]}|\Xi_n(t)-\xi_n(t)| \rp}^{1/2}.
\eeq
In the forthcoming Proposition~\ref{lem:tBM-satisfy-1} and Proposition~\ref{lem:tBM-satisfy-2} we will show that the Assumptions~\ref{assum:tBM-1}, \ref{assum:tBM-2} and \ref{assum:tBM-3} are satisfied for the arrival process given in \eqref{eq:arriv-dropouts} and the truncated renewal process given in \eqref{eq:M_n}. In order to prove these two lemma's, we first recall a few facts on sub-exponential random variables.
\begin{lemma}\label{lem:subexp-1}
Let $X_1, \ldots, X_n$ be iid copies of a random variable with mean $\mu$ such that there exist parameters $(\nu, m)$ satisfying:
\beq\label{eq:subexp-def}
\bex\lc e^{\la (X-\mu)} \rc \leq e^{\frac{\nu^2 \la^2}{2}}~\text{ for all }|\la| < \dfrac{1}{m}.
\eeq
Then the following holds true:
\begin{enumerate}
\item[(i)] 
\beq\label{eq:subexp-1}
\bp \lp \lln \sum_{i=1}^n X_i - n\mu \rrn \geq nt \rp \leq
\begin{cases}
2 e^{-\frac{nt^2}{2 \nu^2}}~\text{ for } 0\leq t \leq \dfrac{\nu^2}{m},\\
2 e^{-\frac{nt}{2m}}~\text{ for }t>\dfrac{\nu^2}{m}.
\end{cases} 
\eeq

\item[(ii)]
\beq\label{eq:subexp-2}
\bp \lp \sup_{0 \leq k \leq n} \lln \sum_{i=1}^k X_i - k \mu \rrn \geq nt \rp \leq
\begin{cases}
2 e^{-\frac{nt^2}{2 \nu^2}}~\text{ for } 0\leq t \leq \dfrac{\nu^2}{m},\\
2 e^{-\frac{nt}{2m}}~\text{ for }t>\dfrac{\nu^2}{m}.
\end{cases}
\eeq
\end{enumerate}
\end{lemma}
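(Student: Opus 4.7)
Both inequalities follow from the classical Chernoff/Cram\'er exponential-moment method; part (ii) additionally uses Doob's submartingale maximal inequality. The starting observation is that, by independence and hypothesis~\eqref{eq:subexp-def},
\[
\bex\lc e^{\la \sum_{i=1}^{n}(X_i-\mu)} \rc \leq e^{n \nu^2 \la^2/2} \qquad \text{for every } |\la| < 1/m.
\]

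\textbf{Part (i).} I would apply Markov's inequality to the exponentiated sum: for each $0 < \la < 1/m$,
\[
\bp \lp \sum_{i=1}^{n}(X_i - \mu) \geq nt \rp \leq e^{-\la n t + n \nu^2 \la^2/2}.
\]
The unconstrained minimum of the exponent occurs at $\la^{*} = t/\nu^2$. If $0 \leq t \leq \nu^2/m$, this $\la^{*}$ is feasible and the bound becomes $e^{-n t^2/(2\nu^2)}$. If $t > \nu^2/m$, the optimum lies outside the admissible interval, so I take $\la \uparrow 1/m$; by continuity of the moment generating function on the open interval this gives $e^{-n t/m + n \nu^2/(2m^2)} \leq e^{-n t/(2m)}$, the last inequality holding precisely because $\nu^2/(2m^2) \leq t/(2m)$ in this regime. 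Running the identical argument for $-(X_i-\mu)$, which satisfies the same sub-exponential bound, controls the lower tail, and a union bound produces the factor $2$.

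\textbf{Part (ii).} The key observation is that for each fixed $\la > 0$ the process $N_k := e^{\la(S_k - k\mu)}$, with $S_k := \sum_{i=1}^{k}X_i$ and $N_0 = 1$, is a non-negative submartingale with respect to the natural filtration: $\bex[N_{k+1}\mid \cf_k] = N_k \cdot \bex[e^{\la(X-\mu)}] \geq N_k$, the last step by Jensen's inequality (the sub-exponential MGF is always $\geq 1$). Doob's maximal inequality then yields
\[
\bp \lp \sup_{0 \leq k \leq n} (S_k - k\mu) \geq n t \rp = \bp \lp \sup_{0 \leq k \leq n} N_k \geq e^{\la n t} \rp \leq e^{-\la n t}\,\bex[N_n] \leq e^{-\la n t + n \nu^2 \la^2/2},
\]
which is \emph{exactly} the pre-optimization bound that appeared in part (i). Consequently, the same two-regime optimization in $\la$ delivers the same quantitative estimate. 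Applying the submartingale argument to $e^{-\la(S_k - k\mu)}$ controls the reflected process, and a final union bound gives the inequality for the absolute value with the factor of $2$.

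\textbf{Main obstacle.} There is no serious obstacle: the statement is textbook Bernstein-type concentration. The only fussy points are (a) justifying the boundary choice $\la = 1/m$ by continuity of $\la \mapsto \bex[e^{\la(X-\mu)}]$ from the open interval on which \eqref{eq:subexp-def} is stated, and (b) the elementary observation that $\nu^2/(2m^2) \leq t/(2m)$ precisely when $t \geq \nu^2/m$, which is what makes the Gaussian and exponential regimes meet cleanly at the crossover $t = \nu^2/m$. The fact that the maximal inequality preserves the bound of part (i) verbatim is what makes the two displays in the lemma statement identical.
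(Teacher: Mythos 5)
Your proposal is correct and follows essentially the same route as the paper: a Chernoff/Markov bound on the exponentiated centered sum with the two-regime optimization over $\la<1/m$ for part (i), and Doob's maximal inequality applied to the nonnegative submartingale $e^{\la(S_k-k\mu)}$ (plus its reflected counterpart and a union bound) to reduce part (ii) to the same optimization. Your write-up simply makes explicit the optimization and boundary step that the paper leaves as "optimization of the right hand side."
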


\begin{proof}
\noindent
\emph{(i)}
The result follows from the usual considerations for sub-exponential random variables (see e.g. \cite[Section~2.1.3]{Wainwright}). The main ingredient is a Chernoff-type approach to obtain:
\begin{align}\label{eq:subexp-3}
\bp \lp \sum_{i=1}^n X_i - n\mu \geq nt \rp &\leq \dfrac{\bex[e^{\la (\sum_{i=1}^n X_i - n \mu)}]}{e^{n \la t}} \\
&\leq \exp \lp -n \la t + \dfrac{n \la^2 \nu^2}{2} \rp. \nonumber
\end{align}
Optimization of the right hand side followed by a premultiplication by $2$ to obtain the two-sided tail bound yields the desired result \eqref{eq:subexp-1}.

\noindent
\emph{(ii)}
Observe that $M_k = \sum_{i=1}^k X_i - k \mu$ is a martingale. In addition, $x \mapsto e^{\la x}$ is a convex function. Consequently $e^{\la M_k}$ is submartingale. Thus applying Doob's martingale inequality we obtain:
\begin{align*}
\bp \lp \sup_{0 \leq k \leq n} \sum_{i=1}^k X_i - k \mu \geq nt \rp = \bp \lp \sup_{0 \leq k \leq n} e^{\la (\sum_{i=1}^k X_i - k \mu)} \geq e^{\la nt} \rp\\
\leq \dfrac{E \lp e^{\la (\sum_{i=1}^k X_i - k\mu)} \rp}{e^{\la n t}},
\end{align*}
thus reducing our considerations to \eqref{eq:subexp-3}. The same arguments carry forward and we obtain \eqref{eq:subexp-2}.
 \end{proof}
%\begin{assumption}
%Let $G_n$ be given by Assumption~\ref{assum:2}.\end{assumption}
The assumption~\eqref{eq:subexp-def} in Lemma~\ref{lem:subexp-1} holds for every random variable $X$ with a finite moment generating function in a neighborhood of $0$. This is a consequence of the following lemma.
\begin{lemma}\label{lem:subexp-2}
Let $X$ be a random variable with mean $\mu$, whose moment generating function exists in a neighborhood of $0$. Then we have:
\beq\label{eq:subexp-4}
\bex\lc e^{\la (X-\mu)} \rc \leq e^{\la^2 \nu^2/2}~\text{ for all }|\la| < \dfrac{1}{m},
\eeq
where $\nu = \sqrt{2 \textnormal{Var}(X)}$ and $m$ is given by the condition
$$
\bex\lc e^{2 \la |X-\mu|} \rc < 4 \text{ for all }|\la| < \dfrac{1}{m}.
$$
\end{lemma}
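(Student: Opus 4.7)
The plan is to derive the sub-exponential MGF bound from a Taylor expansion of $\bex[e^{\lambda(X-\mu)}]$, using the hypothesis $\bex[e^{2\lambda|X-\mu|}]<4$ to tame the remainder. Let $Y := X - \mu$, so that $\bex[Y]=0$ and $\textnormal{Var}(Y)=\textnormal{Var}(X)$; the target is $\bex[e^{\lambda Y}]\leq e^{\lambda^2 \nu^2/2}$ with $\nu^2 = 2\,\textnormal{Var}(Y)$ on the range $|\lambda|<1/m$.

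First I would expand around the origin: since $\bex[Y]=0$,
$$
\bex[e^{\lambda Y}] \;=\; 1 + \bex\!\bigl[e^{\lambda Y}-1-\lambda Y\bigr].
$$
Applying the pointwise inequality $e^{x}-1-x \leq \tfrac{1}{2}x^2 e^{|x|}$ (which follows from $k! \geq 2(k-2)!$ for $k\geq 2$) at $x = \lambda Y$ and taking expectations yields
$$
\bex[e^{\lambda Y}] \;\leq\; 1 + \frac{\lambda^2}{2}\,\bex\!\bigl[Y^2 e^{|\lambda Y|}\bigr].
$$
If the remainder can be bounded by $2\,\textnormal{Var}(Y)$, then the argument closes via $1+x \leq e^x$, giving $\bex[e^{\lambda Y}] \leq 1 + \lambda^2\,\textnormal{Var}(Y) \leq e^{\lambda^2\,\textnormal{Var}(Y)} = e^{\lambda^2 \nu^2/2}$.

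The main step is therefore the bound on $\bex[Y^2 e^{|\lambda Y|}]$. I plan to split the expectation with respect to a threshold $\{|\lambda Y|\leq T\}$ versus $\{|\lambda Y|>T\}$. On the bulk set the factor $e^{|\lambda Y|}$ is bounded by $e^T$, producing a contribution of at most $e^T\,\textnormal{Var}(Y)$. On the tail set the elementary inequality $x^2 \leq 2 e^x$ valid for $x\geq 0$ converts $Y^2$ into an exponential factor, yielding a contribution proportional to $\lambda^{-2}\,\bex[e^{2|\lambda Y|}]\leq 4\lambda^{-2}$ by the defining inequality for $m$. The constants must then be balanced so that the two contributions combine to no more than $2\,\textnormal{Var}(Y)$; this is precisely where the constant $4$ in the condition on $m$ and the factor $2$ in $\nu^2 = 2\,\textnormal{Var}(Y)$ meet.

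The main obstacle I foresee is the calibration of the threshold $T$: the tail contribution scales like $1/\lambda^2$, so care is required to ensure that it does not dominate when $|\lambda|$ is small, while the bulk contribution $e^T\,\textnormal{Var}(Y)$ grows with $T$. Once the bound on $\bex[Y^2 e^{|\lambda Y|}]$ is in hand, the conclusion is immediate from the chain of inequalities above.
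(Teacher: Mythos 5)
Your opening step is the same as the paper's: expand $e^{\la Y}$ for $Y=X-\mu$, use $\bex[Y]=0$ to kill the linear term, and reduce everything to bounding $\bex\lc Y^2 e^{|\la Y|}\rc$. The gap is in how you propose to finish. The bulk/tail split cannot deliver $\bex\lc Y^2 e^{|\la Y|}\rc\le 2\,\textnormal{Var}(Y)$: on the tail set the conversion $x^2\le 2e^x$ gives $\bex\lc Y^2 e^{|\la Y|}\mathbf{1}_{\{|\la Y|>T\}}\rc\le 2\la^{-2}\bex\lc e^{2|\la Y|}\rc\le 8\la^{-2}$, and after multiplying by $\la^2/2$ this is an additive constant (about $4$), not an $O(\la^2)$ quantity, so the resulting bound $\bex\lc e^{\la Y}\rc\le 1+O(1)$ cannot be dominated by $e^{\la^2\nu^2/2}$ as $\la\to 0$. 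This is not a matter of calibrating $T$ (even $\la$-dependently): the intermediate inequality you are aiming for is simply not implied by the hypothesis. For instance, let $Y=\pm a$ with probability $\ep/2$ each and $Y=0$ otherwise; then $\bex\lc Y^2e^{|\la Y|}\rc/\textnormal{Var}(Y)=e^{|\la| a}$, which at $\la$ near the edge of the range where $\bex\lc e^{2\la|Y|}\rc<4$ is of order $\sqrt{3/\ep}$, hence arbitrarily large. The factors $Y^2$ and $e^{|\la Y|}$ are too positively correlated for a variance-only bound, so any correct argument must pay with a higher moment somewhere.

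The paper closes the argument by decoupling the two factors with Cauchy--Schwarz: $\bex\lc Y^2 e^{|\la Y|}\rc\le \sqrt{\bex\lc Y^4\rc}\,\sqrt{\bex\lc e^{2\la |Y|}\rc}\le 2\sqrt{\bex\lc Y^4\rc}$ for $|\la|<1/m$; this is the step missing from your outline, and with it the rest of your plan ($1+x\le e^x$) goes through verbatim. Note what this route actually buys: the constant it supports is the fourth-moment quantity $\sqrt{\bex\lc (X-\mu)^4\rc}$ in place of $\textnormal{Var}(X)$ (and since $\sqrt{\bex\lc (X-\mu)^4\rc}\ge \textnormal{Var}(X)$, the conclusion should be read with $\nu^2=2\sqrt{\bex\lc (X-\mu)^4\rc}$). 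This is immaterial for the way the lemma is used in Propositions~\ref{lem:tBM-satisfy-1} and~\ref{lem:tBM-satisfy-2}, which only need sub-exponentiality with some distribution-dependent parameters $(\nu,m)$, but it shows that your target constant $2\,\textnormal{Var}(X)$ is not attainable by the threshold argument you sketch.
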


\begin{proof}
Observe that the moment generating function of $X$ satisfies:
$$
\bex\lc e^{\la (X-\mu)} \rc \leq \bex\lp 1 + \la(X-\mu) + \dfrac{\la^2}{2} (X-\mu)^2 e^{\la |X-\mu|} \rp.
$$
Noticing $\bex X=\mu$ and by Cauchy-Schwarz inequality we have:
\begin{align*}
\bex\lc e^{\la (X-\mu)} \rc &\leq 1 + \dfrac{\la^2}{2} \sqrt{\bex (X-\mu)^4} \sqrt{\bex \lc e^{2 \la |X-\mu|} \rc}\\
&\leq 1+ \la^2 \textnormal{Var}(X) \sqrt{\dfrac{\bex[e^{2 \la |X-\mu|}]}{4}}.
\end{align*}
Hence for all $\la$ satisfying $\bex[e^{2 \la |X-\mu|}] < 4$ we have:
$$
\bex[e^{\la (X-\mu)}] \leq 1+\la^2 \textnormal{Var} (X).
$$
However, $1+\la^2 \textnormal{Var}(X) \leq e^{\la^2 \textnormal{Var}(X)}$ and consequently:
$$
\bex\lc e^{\la(X-\mu)} \rc \leq e^{\la^2 \textnormal{Var}(X)}.
$$ 
This yields \eqref{eq:subexp-4}.
\end{proof}

\begin{proposition}\label{lem:tBM-satisfy-1}
Let Assumption~\ref{assum:arriv-dropouts} hold with {$G$ being the uniform distribution function on $[0,1]$}. Let $\hat{A}_n$ denote the corresponding arrival process in \eqref{eq:arriv-dropouts}. Then Assumptions~\ref{assum:tBM-1}, \ref{assum:tBM-2} and \ref{assum:tBM-3} hold with
$$
\xi_n(t) = pt~ \text{ and } ~\Xi_n(t) = \dfrac{\hat{A}_n(t)}{n}, \text{ for } t \in [0,1],
$$
and $L_n = 1$ for all $n$.
\end{proposition}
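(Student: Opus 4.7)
The verification of Assumptions~\ref{assum:tBM-1} and~\ref{assum:tBM-2} is essentially immediate. For Assumption~\ref{assum:tBM-1}, the candidate fluid limit $\xi_n(t)=pt$ on $[0,1]$ is Lipschitz with constant $c_{\xi_n}=p$ and bounded by $p$, so the verification reduces to one line. For Assumption~\ref{assum:tBM-2} with $L_n=1$, note that $\hat A_n/n$ is piecewise constant, with jumps only at the (a.s.\ distinct) sample points $T_i$, $1\le i\le n$, so the number of jumps on $[0,1]$ is at most $n$ (take $k=1$, $m=1$); since $0\le \hat A_n(s)/n \le 1$ for all $s$, we also have $D\le 1$.

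The real content is the DKW-style bound in Assumption~\ref{assum:tBM-3}. The plan is to split $\Xi_n(t)-\xi_n(t)$ into an ``indicator'' error and an empirical-process error. Concretely, write
\[
\frac{\hat A_n(t)}{n}-pt \;=\; \frac{1}{n}\sum_{i=1}^{nG_n(t)}(\zeta_i-p) \;+\; p\bigl(G_n(t)-t\bigr),
\]
and hence, taking the supremum over $t\in[0,1]$ and using that $nG_n(t)$ takes values in $\{0,1,\dots,n\}$,
\[
\sup_{t\in[0,1]}\left|\frac{\hat A_n(t)}{n}-pt\right| \;\le\; \sup_{0\le k\le n}\left|\frac{1}{n}\sum_{i=1}^{k}(\zeta_i-p)\right| \;+\; p\sup_{t\in[0,1]}|G_n(t)-t|.
\]
A union bound after splitting the event $\{\sup_t |\Xi_n-\xi_n|>2\ep\}$ into the two corresponding events reduces the problem to controlling each of these pieces separately.

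For the first piece, observe that $\zeta_i-p$ has mean $0$ and a moment generating function defined on all of $\R$, so Lemma~\ref{lem:subexp-2} furnishes sub-exponential parameters $(\nu,m)$ (with $\nu^2=2p(1-p)$), and Lemma~\ref{lem:subexp-1}(ii) (Doob's inequality on the sub-exponential martingale $\sum_{i=1}^{k}(\zeta_i-p)$) yields
\[
\bp\!\left(\sup_{0\le k\le n}\left|\tfrac{1}{n}\textstyle\sum_{i=1}^{k}(\zeta_i-p)\right|\ge \ep\right)
\;\le\; 2\exp\!\bigl(-c\,n(\ep^2\wedge \ep)\bigr)
\]
for a constant $c=c(p)>0$. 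For the second piece, the classical Dvoretzky--Kiefer--Wolfowitz inequality for iid uniforms gives $\bp(\sup_t|G_n(t)-t|>\ep/p)\le 2e^{-2n\ep^2/p^2}$. Adding the two tail bounds and relabelling $2\ep\mapsto\ep$ produces a bound of the shape $k_1 e^{-k_2 n\ep^2\wedge k_3 n\ep}$, which is Assumption~\ref{assum:tBM-3} with $\ga=1$ and (trivially) $k_0=0$.

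The only delicate point is the first piece: the index $nG_n(t)$ is random and depends on the $T_i$'s, but because it takes values only in $\{0,1,\dots,n\}$ and the $\zeta_i$'s are independent of the $T_i$'s, taking the supremum in $t$ may be replaced by a supremum over a deterministic integer range (enabling Doob's inequality). This observation is essentially the only ``technical'' step; modulo it, the proof is routine bookkeeping in the constants.
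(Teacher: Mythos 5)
Your proof is correct and follows essentially the same route as the paper's: the same decomposition of $\hat A_n(t)/n - pt$ into a centered Bernoulli random-walk term and a uniform empirical-process term, the same deterministic replacement of the random index $nG_n(t)$ by a supremum over $k\in\{0,\dots,n\}$, a union bound, DKW for the empirical piece, and Lemmas~\ref{lem:subexp-1} and~\ref{lem:subexp-2} via Doob's inequality for the random-walk piece. The only differences are cosmetic (you work with $\zeta_i-p$ directly rather than the normalization $(\zeta_i-p)/\sqrt{p(1-p)}$, and you retain the $\ep^2\wedge\ep$ form of the sub-exponential tail where the paper records only the Gaussian regime), so you have matched the paper's argument.
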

\begin{proof}
Note that $\xi_n$ is bounded and Lipschitz with $c_{\xi_n} = p$. Next, notice that $\hat{A}_n$ has at most $n$ jumps in $[0,1]$. In addition:
$$
\sup_{s \in [0,1]} \lcl \lln \dfrac{\hat{A}_n(s)}{n} \rrn, \lln ps \rrn \rcl \leq 1.
$$
It remains to prove a DKW type inequality for the difference $|\frac{\hat{A}_n(t)}{n} - pt|$.  To that effect observe that 
\begin{align*}
\dfrac{\hat{A}_n(s)}{n} - ps &= \dfrac{\sqrt{p(1-p)}}{n} \sum_{i=1}^{nF_n(s)} \dfrac{\chi_i - p}{\sqrt{p(1-p)}} + p\lp F_n(s) -s \rp\\
&= \sqrt{p(1-p)} \dfrac{1}{n} \sum_{i=1}^{n F_n(s)} X_i + p\lp F_n(s) -s \rp,
\end{align*}
where $X_i = \frac{\chi_i - p}{\sqrt{p(1-p)}}$ are iid random variables with mean $0$ and variance $1$. 
Consequently, we have 
\beq\label{eq:star1}
\bp \lp \sup_{s \in [0,1]} \lln \dfrac{\hat{A}_n(s)}{n} - ps \rrn > \ep \rp \leq \bp \lp \sqrt{p(1-p)} \sup_{0\leq k \leq n} \dfrac{1}{n} \lln \sum_{i=1}^k X_i \rrn > \dfrac{\ep}{2} \rp + \bp \lp p \lln F_n(s) -s \rrn > \dfrac{\ep}{2} \rp.
\eeq
From the standard DKW inequality for empirical distributions~\cite{DKW}, the second term has the standard exponentially decreasing bound given by:
\beq\label{eq:star2}
\bp \lp \sup_{s \in [0,1]} p \lln F_n(s) - s \rrn > \dfrac{\ep}{2} \rp \leq 2 e^{- n \ep^2 /(4p^2)}.  
\eeq
For the first term, observe that the $X_i$'s have a finite moment generating function $E[e^{\la X_i}]$ for all $\la$. Hence appealing to Lemma~\ref{lem:subexp-1} and Lemma~\ref{lem:subexp-2} we obtain:
\beq\label{eq:subexp-dkw-1}
\bp \lp \sup_{0 \leq k \leq n} \lln \sum_{i=1}^k X_i \rrn > \dfrac{n \ep}{2 \sqrt{p(1-p)}} \rp \leq 2 e^{-\frac{n \ep^2}{8p(1-p)}}.
\eeq
Consequently there exist constants $k_1$ and $k_2$ such that
$$
\bp \lp \sup_{s \in [0,1]} \lln \dfrac{\hat{A}_n(s)}{n} - ps  \rrn > \ep \rp \leq k_1 e^{-k_2 n \ep^2}.
$$
\end{proof}

\begin{proposition}\label{lem:tBM-satisfy-2}
Let Assumption~\ref{assum:1} hold and $M_n$ be given by \eqref{eq:M_n}. Then for any sequence of non-decreasing positive reals $L_n$, Assumptions~\ref{assum:tBM-1}, \ref{assum:tBM-2} and \ref{assum:tBM-3} hold with
$$
\xi_n(t) = \lp \dfrac{c_n}{n} \dfrac{t}{\mu} \rp\wedge 1 ~\text{ and }~ \Xi_n(t) = \dfrac{M_n(t)}{n},~\text{ for }t\in [0, L_n].
$$
\end{proposition}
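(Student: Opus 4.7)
My plan is to verify each of the three assumptions in turn, with Assumption~\ref{assum:tBM-3} (the DKW-type bound) being the main work. For Assumption~\ref{assum:tBM-1}, observe that $\xi_n$ is bounded above by $1$ and is the minimum of $1$ and a linear function of slope $c_n/(n\mu)$, hence Lipschitz with coefficient $c_{\xi_n} = c_n/(n\mu)$. For Assumption~\ref{assum:tBM-2}, $M_n$ increases by unit jumps and is bounded by $n$, so it has at most $n$ jumps in $[0, L_n]$ (so $k = 1, m = 1$); and $\sup_s |M_n(s)/n| \le 1$, hence the limsup is finite.

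The bulk of the work is Assumption~\ref{assum:tBM-3}. The key is the renewal duality $\{M_n(t) \ge k\} = \{S_k \le c_n t\}$ for $k \le n$, where $S_k := \sum_{i=1}^k V_i$. Let $G_n := \{\sup_{0 \le m \le n}|S_m - m\mu| \le n\mu\epsilon\}$. On $G_n$, I will establish the deterministic bound
\[
\sup_{t \in [0, L_n]} \left|\frac{M_n(t)}{n} - \xi_n(t)\right| \le \epsilon + \frac{1}{n}.
\]
For the upper bound, if $M_n(t) = k$ then $S_k \le c_n t$; on $G_n$ this yields $k \le c_n t/\mu + n\epsilon$, and combined with $k \le n$ gives $k \le n\xi_n(t) + n\epsilon$. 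For the lower bound, for any $m \le n$ with $m\mu + n\mu\epsilon \le c_n t$ we have $S_m \le c_n t$ on $G_n$, hence $M_n(t) \ge m$; optimizing gives $M_n(t) \ge (c_n t/\mu - n\epsilon - 1) \wedge n \ge n\xi_n(t) - n\epsilon - 1$ (and the trivial bound $M_n(t) \ge 0$ handles the degenerate case when $n\xi_n(t) \le n\epsilon + 1$). This yields
\[
\bp\!\left(\sup_{t \in [0, L_n]} \left|\frac{M_n(t)}{n} - \xi_n(t)\right| > \epsilon + \frac{1}{n}\right) \le \bp\!\left(\sup_{0 \le m \le n}|S_m - m\mu| > n\mu\epsilon\right).
\]

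The rest is an application of the sub-exponential partial-sum bound. Assumption~\ref{assum:1} gives that $V_i$ has an MGF in a neighborhood of zero, so by Lemma~\ref{lem:subexp-2} the increments $V_i - \mu$ satisfy the sub-exponential condition~\eqref{eq:subexp-def} with parameters $(\nu, M)$ depending only on the service distribution. Lemma~\ref{lem:subexp-1}(ii) applied with $t = \mu\epsilon$ then yields $\bp(\sup_m|S_m - m\mu| > n\mu\epsilon) \le 2\exp(-k_2 n\epsilon^2 \wedge k_3 n \epsilon)$ for explicit constants $k_2, k_3$. Absorbing the $1/n$ slack by taking $k_0$ large enough that $k_0 \log n / n \ge 1/n$ for all $n \ge 1$ (and rescaling $\epsilon \mapsto \epsilon/2$ to match the form of the assumption) produces Assumption~\ref{assum:tBM-3} with $\gamma = 1$.

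\textbf{Main obstacle.} The principal subtlety is the truncation at $n$: the deterministic comparison between $M_n(t)/n$ and $\xi_n(t)$ must hold uniformly over all $t \in [0, L_n]$, including the saturated regime $c_n t \ge n\mu$ where $\xi_n(t) \equiv 1$ while $M_n(t)/n$ can still fall short of $1$. The argument above handles this uniformly because the partial-sum deviations $|S_m - m\mu|$ for $m$ up to $n$ simultaneously control (via duality) both the unsaturated and saturated regimes, and the resulting bound is independent of $L_n$. A minor secondary point is that the Lipschitz coefficient $c_n/(n\mu)$ is allowed to grow polynomially in $n$, which is consistent with Assumption~\ref{assum:tBM-1} since no uniform-in-$n$ Lipschitz constant is required there.
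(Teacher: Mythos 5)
Your proof is correct, and it takes a genuinely different route for the key step (verifying Assumption~\ref{assum:tBM-3}). The paper splits $[0,L_n]$ at $S_n/c_n$, rewrites $M_n(t)$ via the (untruncated) renewal counting process $N(c_n t)$, changes variables, and then invokes a lemma from Horv\'ath (the cited \cite{Ho84}) to pass from $\sup_{s\le S_n/\mu}|\tilde N_s-s|$ to $\sup_{s\le n}|\tilde U(s)-s|$ and finally to the centered partial sums, ending with a two-term union bound over $\sup_k|\tilde S_k-k|$ and $|S_n/n-\mu|$. You instead use the elementary duality $\{M_n(t)\ge k\}=\{S_k\le c_n t\}$ (valid for $k\le n$) to prove, \emph{deterministically} on the good event $G_n=\{\sup_{m\le n}|S_m-m\mu|\le n\mu\epsilon\}$, the two-sided inequality $n\xi_n(t)-n\epsilon-1\le M_n(t)\le n\xi_n(t)+n\epsilon$ uniformly in $t$, which reduces the whole DKW bound to a single application of Lemma~\ref{lem:subexp-1}(ii). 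This avoids the external lemma and is more self-contained; it also naturally and transparently handles the saturated regime $c_n t\ge n\mu$ because the truncation $k\le n$ is built into the duality, whereas the paper has to treat that regime as a separate supremum. Both approaches of course reduce to the same sub-exponential partial-sum bound, so the resulting constants are comparable. One cosmetic remark: the statement ``take $k_0$ large enough that $k_0\log n/n\ge 1/n$ for all $n\ge 1$'' fails at $n=1$ (where $\log n=0$); this is immaterial (and the paper's own proof has the identical degeneracy, with a $2/n$ additive term), but you should phrase it as holding for $n\ge 2$, with $n=1$ being trivial.
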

\begin{proof}
Note that $\xi_n$ is bounded and Lipschitz with $c_{\xi_n} = \frac{c_n}{n \mu}$. Next notice that $M_n$ has at most $n$ jumps in $[0,L_n]$. In addition
$$
\sup_{s \in [0,L_n]}  \lln \dfrac{M_n(s)}{n} \rrn < 1.
$$
It remains to prove a DKW-style inequality for the difference $|\frac{M_n(t)}{n} - (\frac{c_n}{n}\frac{t}{\mu})\wedge 1|$. Observe that for any $L_n$ positive:
\begin{align*}
\sup_{0 \leq t \leq L_n} \lln  \dfrac{M_n(t)}{n} - \lp \dfrac{c_n}{n} \dfrac{t}{\mu} \rp \wedge 1 \rrn &\leq \sup_{0 \leq t \leq \frac{S_n}{c_n}+} \lln \dfrac{M_n(t)}{n} - \dfrac{c_n}{n} \dfrac{t}{\mu} \rrn + \sup_{\frac{S_n}{c_n} \leq t \leq L_n} \lln 1- \lp \dfrac{c_n}{n}\dfrac{t}{\mu} \rp \wedge 1 \rrn\\
&\leq \sup_{0 \leq t \leq \frac{S_n}{c_n}+} \dfrac{\lln N(c_nt)-\frac{c_n t}{\mu} \rrn}{n} + \dfrac{1}{n} + \dfrac{\lln \frac{S_n}{n} - \mu \rrn}{\mu},
\end{align*}
where $N(t) = \inf \{ m \geq 0: \sum_{i=1}^m V_i > t \}$. By a change of variable the first term on the right hand side has a simpler representation upon which we have:
\beq\label{eq:str-app-ineq-1}
\sup_{0 \leq t \leq L_n} \lln  \dfrac{M_n(t)}{n} - \lp \dfrac{c_n}{n}\dfrac{t}{\mu} \rp \wedge 1 \rrn \leq \sup_{0 \leq s \leq \frac{S_n}{\mu}+} \dfrac{|\tilde{N}_s -s|}{n} + \dfrac{1}{n} + \dfrac{|\frac{S_n}{n}-\mu|}{\mu},
\eeq
where $\tilde{N}_s = \inf \{ m\geq 0: \sum_{i=1}^m \frac{V_i}{\mu} > t \}$. 
From \cite[Lemma]{Ho84} and observing that $\tilde{N}(\frac{S_n}{\mu}+) = n$, we notice that
\beq\label{eq:str-app-ineq-2}
\sup_{0 \leq s \leq \frac{S_n}{\mu}+} \lln \tilde{N}(s) - s \rrn \leq \sup_{0 \leq s \leq n} \lln \tilde{U}(s) - s \rrn, 
\eeq
where $\tilde{U}(s) = \sum_{i=1}^{[s]} \frac{V_i}{\mu}$. In addition 
\beq\label{eq:str-app-ineq-3}
\sup_{0 \leq s \leq n} \lln \tilde{U}(s) - s \rrn \leq \sup_{0\leq k \leq n} \lln \tilde{S}_k - k \rrn + 1,
\eeq
where $\tilde{S}_k = \sum_{i=1}^k \frac{V_i}{\mu}$.
Combining equations \eqref{eq:str-app-ineq-1}, \eqref{eq:str-app-ineq-2} and \eqref{eq:str-app-ineq-3} we obtain for all $\ep > 0$:
\begin{multline}\label{eq:str-app-ineq-4}
\bp \lp \sup_{0 \leq t \leq L_n} \lln \dfrac{M_n(t)}{n} - \lp \dfrac{c_n}{n}\dfrac{t}{\mu} \rp \wedge 1 \rrn \geq \ep + \dfrac{2}{n} \rp \\
\leq \bp \lp \sup_{0 \leq k \leq n} \lln  \tilde{S}_k - k\rrn > \dfrac{\ep n}{2} \rp + \bp \lp \lln \dfrac{S_n}{n} - \mu \rrn > \dfrac{\ep \mu}{2} \rp.
\end{multline}
Observe that $V_i$'s have a finite moment generating function in a neighborhood of $0$. Hence appealing to Lemma~\ref{lem:subexp-1} and Lemma~\ref{lem:subexp-2} we have for all $\ep > 0$:
$$
\bp \lp \sup_{0 \leq k \leq n} \lln \tilde{S}_k - k \rrn > \dfrac{n \ep}{2} \rp  \leq 2 \exp \lp -k_2' n \ep^2 \wedge k_3' n \ep \rp,
$$
for some constants $k_2$ and $k_3$. Similarly, Lemma~\ref{lem:subexp-1} and Lemma~\ref{lem:subexp-2} also imply for all $\ep > 0$:
$$
\bp \lp \lln \dfrac{S_n}{n} - \mu \rrn > \dfrac{\ep \mu}{2} \rp  \leq 2 \exp \lp -k_4' n \ep^2 \wedge k_5' n \ep \rp.
$$
Consequently we have constants $k_1$, $k_2$ and $k_3$ such that for all $\ep > 0$:
$$
\bp \lp \sup_{0 \leq t \leq L_n} \lln \dfrac{M_n(t)}{n} - \lp \dfrac{c_n}{n} \dfrac{t}{\mu} \rp \wedge 1 \rrn \geq \ep +\dfrac{2}{n}  \rp  \leq k_1 e^{-k_2 n \ep^2 \wedge k_3 n\ep}.
$$
\end{proof}

In the following lemma we obtain an upper bound on the expected value of $\al_n$ as denoted in \eqref{eq:al_n}. This result would be utilized in the forthcoming Lemma~\ref{lem:ga_n}.
\begin{lemma}\label{lem:al_n}
Let Assumptions \ref{assum:tBM-1}, \ref{assum:tBM-2} and \ref{assum:tBM-3} hold. Then there exists a constant $C'$ such that:
\beq\label{eq:E-al_n}
\bex\lc \al_n \rc \leq \dfrac{C'}{n^{\ga/4}}.
\eeq
\end{lemma}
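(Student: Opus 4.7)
The plan is to obtain the bound by integrating the tail probability. Writing $Y_n := \sup_{t \in [0, L_n]} |\Xi_n(t) - \xi_n(t)|$ so that $\alpha_n = \frac{1}{\sqrt{2}} Y_n^{1/2}$, the layer-cake representation gives
$$
\mathbb{E}[\alpha_n] \;=\; \frac{1}{\sqrt{2}} \int_0^\infty \mathbb{P}(Y_n > u^2)\, du.
$$
The strategy is to split this integral at a cutoff chosen to absorb the additive $k_0 \log n/n$ correction in Assumption~\ref{assum:tBM-3}, then invoke the exponential bound on the remainder and change variables to extract the factor $n^{-\gamma/4}$.

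Concretely, I would set $a_n := \sqrt{2 k_0 \log n/n}$, so that $u \geq a_n$ implies $u^2 - k_0 \log n/n \geq u^2/2$. Bounding the probability by $1$ on $[0,a_n]$ contributes $a_n$, and this is $o(n^{-\gamma/4})$ in the regime in which the lemma is applied. On $[a_n,\infty)$, apply Assumption~\ref{assum:tBM-3} with $\epsilon = u^2 - k_0 \log n/n \geq u^2/2$ to get
$$
\mathbb{P}(Y_n > u^2) \;\leq\; k_1 \exp\!\left(-k_2 n^\gamma u^4/4 \,\wedge\, k_3 n^\gamma u^2/2\right).
$$

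Now perform a secondary split at $u_0 := \sqrt{2 k_3/k_2}$: for $a_n \leq u \leq u_0$ the quartic term is the binding one, while for $u > u_0$ the quadratic term binds. The substitution $v = u \, n^{\gamma/4}$ in
$$
\int_{a_n}^{u_0} k_1 \exp(-k_2 n^\gamma u^4/4)\, du \;=\; \frac{k_1}{n^{\gamma/4}} \int_{a_n n^{\gamma/4}}^{u_0 n^{\gamma/4}} \exp(-k_2 v^4/4)\, dv \;\leq\; \frac{C_1}{n^{\gamma/4}},
$$
with $C_1 = k_1 \int_0^\infty e^{-k_2 v^4/4}\, dv < \infty$, provides the dominant contribution. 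The remaining piece $\int_{u_0}^\infty k_1 e^{-k_3 n^\gamma u^2/2}\, du$ is Gaussian with variance of order $n^{-\gamma}$ and is super-exponentially small in $n$, hence negligible.

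The proof is essentially careful bookkeeping rather than a single decisive trick; the main point to be vigilant about is handling the minimum of the quadratic and linear rates inside the exponential (the crossover at $u = u_0$), and checking that after rescaling the resulting integrals converge to constants independent of $n$. Collecting the three contributions yields $\mathbb{E}[\alpha_n] \leq C'/n^{\gamma/4}$ for a constant $C'$ depending only on $k_0,k_1,k_2,k_3$, as required.
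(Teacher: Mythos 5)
Your overall route is the same as the paper's: write $\mathbb{E}[\alpha_n]$ by the layer-cake formula, split the integral at the threshold coming from the additive $k_0\log n/n$ term, bound the probability by $1$ below the threshold, apply Assumption~\ref{assum:tBM-3} above it, and rescale to pull out $n^{-\gamma/4}$. Your treatment of the exponent is in fact a little cleaner than the paper's: the factor-of-two absorption $\epsilon = u^2 - k_0\log n/n \ge u^2/2$, the explicit crossover at $u_0=\sqrt{2k_3/k_2}$ between the quartic and quadratic regimes, and the substitution $v=u\,n^{\gamma/4}$ are all correct, and the Gaussian piece beyond $u_0$ is indeed $O(n^{-\gamma/2})$, hence negligible. (The paper instead changes variables to $s=2t^2$ first, shifts by $k_0\log n/n$, and substitutes $n^{\gamma}s^2\mapsto z$, keeping the minimum inside one integral; the content is the same.)

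The one genuine gap is your dismissal of the sub-threshold contribution. You bound it by $a_n=\sqrt{2k_0\log n/n}$ and assert this is $o(n^{-\gamma/4})$ ``in the regime in which the lemma is applied,'' but the lemma is stated for all $0<\gamma<4$, and $\sqrt{\log n/n}=o(n^{-\gamma/4})$ only when $\gamma<2$; for $\gamma\ge 2$ this term can dominate, and Assumption~\ref{assum:tBM-3} gives no information about $\sup_t|\Xi_n-\xi_n|$ below the level $k_0\log n/n$, so the term cannot be removed by a different argument either. The paper does not discard it: its proof carries the term $\tfrac{1}{\sqrt 2}\sqrt{k_0\log n/n}$ through and concludes with the bound $C'\sqrt{\log n}\,/\,n^{\gamma/4}$ (note this is what the paper's proof actually establishes, and it is the form used later in Lemma~\ref{lem:ga_n}, where the extra $\sqrt{\log n}$ is harmless; in the paper's applications $\gamma=1$, so the absorption is legitimate there). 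To repair your write-up, either restrict to $\gamma<2$ and justify the $o(n^{-\gamma/4})$ claim explicitly, or, as in the paper, retain the boundary term and state the conclusion with the $\sqrt{\log n}$ factor; as written, the claimed bound $C'/n^{\gamma/4}$ does not follow from your argument over the full range of $\gamma$ allowed by the assumptions.
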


\begin{proof}
Since $\xi_n$ is bounded, without loss of generality let us assume
$$
\sup_{s \in [0,L_n]} \lcl |\xi_n(s)|,  \lln \Xi_n(s) \rrn \rcl \leq D.
$$
As a consequence notice:
$$
\al_n = \frac{1}{\sqrt{2}} { \lp \sup_{t \in [0,L_n]} \lln \Xi_n(t) - \xi_n(t) \rrn \rp}^{1/2} \leq \sqrt{D}.
$$ 
Thus we have:
\begin{align}\label{eq:ub-BTIS-2}
\bex\lc \al_n \rc &= \int_0^{\sqrt{D}} \bp [\al_n > t] dt\nonumber = \int_{0}^{\sqrt{D}} \bp \lc \sup_{t \in [0, L_n]} \lln \Xi_n(t) - \xi_n(t) \rrn > 2t^2 \rc dt \\
&= \dfrac{1}{2\sqrt{2}} \int_0^{2D}  \dfrac{1}{\sqrt{s}} \bp \lc \sup_{t \in [0, L_n]} \lln \Xi_n(t) - \xi_n(t) \rrn > s \rc ds,
\end{align}
where the last step is obtained by a change of variable. Now breaking the integral into parts and using Assumption~\ref{assum:tBM-3} we obtain from above:
\begin{align*}
\bex& \lc \al_n \rc\leq \dfrac{1}{2\sqrt{2}} \int_0^{k_0 \frac{\log n}{n}} \dfrac{1}{\sqrt{s}} ds + \int_{k_0 \frac{\log n}{n}}^{2D} \dfrac{1}{\sqrt{s}} \bp \lp \sup_{t \in [0, L_n]} \lln \Xi_n(t) -\xi_n(t) \rrn > s \rp ds\\
&\leq \dfrac{1}{\sqrt{2}} \sqrt{ k_0 \dfrac{\log n}{n} } +  \dfrac{1}{2\sqrt{2}} \int_0^{2D - k_0 \frac{\log n}{n}} \dfrac{1}{\sqrt{s + k_0 \frac{\log n}{n}}} \bp \lp \sup_{t \in [0, L_n]} \lln \Xi_n(t) - \xi_n(t) \rrn > k_0 \dfrac{\log n}{n} + s \rp ds \\
&\leq \dfrac{1}{\sqrt{2}} \sqrt{ k_0 \dfrac{\log n}{n} }  + \dfrac{1}{2 \sqrt{2}} \int_0^{2D - k_0 \frac{\log n}{n}}\dfrac{1}{\sqrt{s}} k_1 e^{-k_2n^{\ga}s^2 \wedge k_3 n^{\ga} s} ds\\
&\leq \dfrac{1}{\sqrt{2}} \sqrt{ k_0 \dfrac{\log n}{n} } + \dfrac{1}{4\sqrt{2} n^{\ga/4}} \int_0^{\infty} k_1 e^{-k_2 z \wedge k_3 \sqrt{z} n^{\ga /2}}z^{-3/4} dz \leq C' \dfrac{\sqrt{\log n}}{n^{\ga/4}},
\end{align*}
for some constant $C'$, where the penultimate step is obtained by a change of variable ($n^{\ga} s^2 \mapsto z$) and then use of the fact that the integral $\int_0^{\infty} \exp (-4 k_2 z \wedge k_3 \sqrt{z} n^{\ga/2}) z^{-3/4} dz$ is bounded yields our desired result \eqref{eq:E-al_n}.
\end{proof}
%Having introduced the stochastic pure jump process and its fluid limit and having obtained an upper bound to their expected difference, we finally introduce the Brownian motion, the central character of this section. In addition a few more notations are collected in the following assumption.
\begin{assumption}\label{assum:tBM-4}
Let $\xi_n$ and $\Xi_n$ be as considered in Assumptions~\ref{assum:tBM-1} and \ref{assum:tBM-2}. Let a standard Brownian motion $B$ be defined on the same probability space $({\Omega}, {\cf}, \bp)$. Then let $\hat{f_s}$ be the Gaussian process defined on $[0,L_n]$ by 
$$
\hat{f_s} = B_{\xi_n(s)} - B_{\Xi_n(s)}.
$$
In addition, let $\bP$ denote the conditional probability on $(\Omega, \cf, \bp)$ given $\Xi_n$. Thus the conditional expectation $\E [Z] = \bex [Z \vert \Xi_n(s); s \in [0, L_n]]$. Denote $\ga_n := \bex [\sup_{t \in [0,L_n]} \hat{f}_t ]$.
\end{assumption}

The key ingredient to find probabilistic bounds for $\hat{f}$ as alluded to above is the Borell-TIS inequality included below for completeness. As a first step we find the conditional expectation of $\sup \hat{f}$ given $\Xi_n$. This is obtained in the following lemma. 
\begin{lemma}
Let Assumptions~\ref{assum:tBM-1}, \ref{assum:tBM-2} and \ref{assum:tBM-4} hold. Then there exist constants $M$ and $\tilde{C}$ such that 
\beq\label{eq:cond-E}
\E \lc  \sup_{t \in [0,L_n]} \hat{f}_t \rc \leq M \lp {\int_0^{\al_n} \sqrt{\log \lp \dfrac{c_{\xi_n} L_n}{\ep^2} + 1 \rp} d\ep + \tilde{C} \al_n \sqrt{\log n}} \rp
\eeq
\end{lemma}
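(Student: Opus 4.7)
The plan is to apply Dudley's entropy integral bound to the centered Gaussian process $\{\hat{f}_s\}_{s\in[0,L_n]}$ under the conditional probability $\bP$. Since $\hat{f}$ is conditionally centered Gaussian, Dudley's theorem yields
\[
\E\lc \sup_{s \in [0,L_n]} \hat{f}_s \rc \leq K \int_0^{D/2} \sqrt{\log N([0,L_n], d, \ep)}\, d\ep,
\]
for an absolute constant $K$, where $d(s,t) := \sqrt{\E[(\hat{f}_s - \hat{f}_t)^2]}$ is the conditional $L^2$ canonical metric and $D$ is its diameter on $[0,L_n]$. The work then consists of bounding $d$, $D$, and $N$ in terms of $\al_n$, $c_{\xi_n}L_n$, and the jump bound $kn^m$.

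First I would control the canonical metric and its diameter. Applying $(a-b)^2 \le 2a^2+2b^2$ to the decomposition $\hat{f}_s - \hat{f}_t = (B_{\xi_n(s)}-B_{\xi_n(t)}) - (B_{\Xi_n(s)}-B_{\Xi_n(t)})$ gives
\[
d^2(s,t) \leq 2|\xi_n(s) - \xi_n(t)| + 2|\Xi_n(s) - \Xi_n(t)|,
\]
so $d(s,t) \leq \sqrt{2}\bigl(d_\xi(s,t) + d_\Xi(s,t)\bigr)$ where $d_\xi(s,t) := \sqrt{|\xi_n(s)-\xi_n(t)|}$ and $d_\Xi(s,t) := \sqrt{|\Xi_n(s)-\Xi_n(t)|}$. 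Similarly, the $L^2$ triangle inequality gives $d(s,t) \leq \sqrt{|\xi_n(s)-\Xi_n(s)|} + \sqrt{|\xi_n(t)-\Xi_n(t)|} \leq 2\sqrt{2}\,\al_n$, so the Dudley integral may be truncated at $\sqrt{2}\,\al_n$.

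Next I would bound the covering number. Intersecting $c\ep$-covers in $d_\xi$ and in $d_\Xi$ produces an $\ep$-cover in $d$ for a suitable constant $c>0$, so
\[
\log N([0,L_n],d,\ep) \leq \log N([0,L_n], d_\xi, c\ep) + \log N([0,L_n], d_\Xi, c\ep).
\]
Lipschitz continuity of $\xi_n$ with constant $c_{\xi_n}$ gives $N([0,L_n], d_\xi, c\ep) \leq c_{\xi_n} L_n/(c\ep)^2 + 1$, while the at most $kn^m$ jumps of $\Xi_n$ yield $N([0,L_n], d_\Xi, c\ep) \leq kn^m + 1$, since one representative per maximal constancy interval of $\Xi_n$ is already an exact cover (the $d_\Xi$ metric vanishes on such intervals). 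Plugging these into the Dudley integral, using $\sqrt{a+b} \leq \sqrt{a}+\sqrt{b}$ to split the integrand, and handling the inner constant via $\log(c'A/\ep^2+1) \leq \log c' + \log(A/\ep^2+1)$ for $c'\ge 1$ yields
\[
\E\lc \sup_{s} \hat{f}_s \rc \leq M\int_0^{\sqrt{2}\al_n}\sqrt{\log\lp\frac{c_{\xi_n}L_n}{\ep^2}+1\rp} d\ep + M\,\al_n\sqrt{m\log n + O(1)},
\]
which after absorbing all constants into $M$ and $\tilde{C}$ matches the claimed bound.

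The main obstacle lies on the $\Xi_n$ side of the covering number: because $\Xi_n$ is a pure jump process with at most $kn^m$ jumps, its $d_\Xi$-covering number is bounded by the number of constancy intervals \emph{independently} of $\ep$. This flat bound is precisely what produces the $\tilde{C}\al_n\sqrt{\log n}$ term, while the $\ep$-dependent Lipschitz bound on $\xi_n$ produces the standard Dudley integral in the first term. Care is needed to track constants through the intersection-of-covers step and the $\log(c'A/\ep^2+1)$ reduction so that the logarithmic expression in the statement has the clean form $\log(c_{\xi_n}L_n/\ep^2+1)$.
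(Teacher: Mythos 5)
Your proof is correct and follows essentially the same approach as the paper: Dudley's entropy bound, diameter control by $\sup|\Xi_n-\xi_n|$, and a covering number that separates into a Lipschitz part contributing $\log(c_{\xi_n}L_n/\ep^2+1)$ and an $\ep$-independent part of size $kn^m+1$ from the constancy intervals of $\Xi_n$. The only cosmetic difference is that you bound $\hat N(\ep)$ by a product of covers (intersection trick) where the paper partitions by constancy intervals and sums $\lceil c_{\xi_n}R_i/\ep^2\rceil$, obtaining the sum $c_{\xi_n}L_n/\ep^2+(kn^m+1)$; both feed into the same log-splitting step.
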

\begin{proof}
The canonical metric for $\hat{f}$ in $(\hat{\Omega}, \hat{\cf}, \bP)$ is given by
\beq\label{eq:can-metr-1}
\hat{d}(s,t) = {\lp \E \lc (\hat{f_s} - \hat{f_t})^2 \rc \rp}^{1/2}.
\eeq
Let $\hat{D}$ denote the diameter of $[0,L_n]$ with respect to the canonical metric, i.e.,
$$
\hat{D} = \sup_{s,t \in [0,L_n]} \hat{d}(s,t).
$$
Let $\hat{N}(\ep)$ be the metric entropy defined by the smallest number of balls of diameter $\ep$ (with respect to the canonical metric $\hat{d}$) that cover $[0,L_n]$. Then from \cite[Theorem 1.3.3]{adler} there exists a universal constant $M$ such that 
\beq\label{eq:E-sup-ub}
\E \lc \sup_{t \in [0,L_n]} \hat{f_t} \rc \leq M \int_0^{\hat{D}/2} {\lp \log \hat{N}(\ep) \rp}^{1/2} d\ep.
\eeq
It can be easily shown that the canonical metric $\hat{d}$ as defined in \eqref{eq:can-metr-1} satisfies:
\begin{multline*}
\hat{d}(s,t)^2 =  \lc \lln \xi_n(s)-\Xi_n(s) \rrn + \lln \xi_n(t)-\Xi_n(t) \rrn -2 (\xi_n(s) \wedge \xi_n(t) + \Xi_n(s) \wedge \Xi_n(t) \right.\\
 \left.- \xi_n(s) \wedge \Xi_n(t) -  \xi_n(t) \wedge \Xi_n(s)) \rc.
\end{multline*}
If the numbers $\xi_n(s),\Xi_n(s),\xi_n(t),\Xi_n(t)$ be arranged in ascending order form the vector $(d_1,d_2,d_3,d_4)$, then it can be shown that 
\beq
\hat{d}(s,t) = \sqrt{(d_4-d_3) + (d_2 - d_1)}.
\eeq
This implies, for $s$ and $t$ such that $\Xi_n(s)=\Xi_n(t)$, 
$$
\hat{d}(s,t) = \sqrt{|\xi_n(s)- \xi_n(t)|} \leq \sqrt{c_{\xi_n}} \sqrt{|s-t|}.
$$ 
In addition, note that ${\hat{D}}/2=\sup_{s,t \in [0,L_n]} \hat{d}(s,t)/2 \leq (\sup_{t \in [0,L_n]}|\Xi_n(t)-\xi_n(t)|)^{1/2}/\sqrt{2} = \al_n$.

In order to obtain an upper bound to $\hat{N}(\ep)$, recall as mentioned earlier, $\hat{d}(s,t) \leq \sqrt{c_{\xi_n}} \sqrt{|s-t|}$ whenever $\Xi_n(s)=\Xi_n(t)$. Since $\Xi_n$ has at most $kn^m$ points of discontinuity, there are at most $(kn^m+1)$ intervals where $\Xi_n$ is constant. Let these intervals be $R_0,\ldots,R_{kn^m}$. Then $\hat{N}(\ep)$ can be bounded above as follows:
\beq\label{eq:N(ep)-ub}
\hat{N}(\ep) \leq \sum_{i=0}^{kn^m} \left\lceil{c_{\xi_n}\dfrac{R_i}{\ep^2}}\right\rceil \leq \sum_{i=0}^{kn^m} c_{\xi_n}\dfrac{R_i}{\ep^2} + (kn^m+1) =c_{\xi_n} \frac{L_n}{\ep^2} + (kn^m+1).
\eeq
Thus, using \eqref{eq:N(ep)-ub} we get from \eqref{eq:E-sup-ub}:
\beq\label{eq:Borell-TIS-2}
\E \lc \sup_{t \in [0,L_n]} \hat{f}_t \rc \leq M \int_0^{\al_n} \sqrt{\log \lp \dfrac{c_{\xi_n}L_n'}{\ep^2}+kn^m+1 \rp} d\ep.
\eeq
Observe that $\log(x+y) \leq \log(x+1) + \log(y)$ for $x \geq 0$ and $y \geq 1$. Consequently we obtain
\begin{align}\label{eq:int-sqrtlog-ub}
\int_0^{\al_n} \sqrt{\log \lp \frac{c_{\xi_n}L_n}{\ep^2}+kn^m+1 \rp} d\ep &\leq \int_0^{\al_n} \sqrt{\log \lp \dfrac{c_{\xi_n}L_n}{\ep^2} + 1 \rp + \log(kn^m+1)} d\ep \nonumber\\
&\leq \int_0^{\al_n} \sqrt{\log \lp \dfrac{c_{\xi_n} L_n}{\ep^2} + 1 \rp} d\ep + \al_n \sqrt{\log(kn^m+1)},
\end{align}
where in the last step we have used the fact that $\sqrt{x+y} \leq \sqrt{x} + \sqrt{y}$. From \eqref{eq:Borell-TIS-2} and \eqref{eq:int-sqrtlog-ub} we obtain that there exists a constant $\tilde{C}$ such that:
$$
\E \lc  \sup_{t \in [0,L_n]} \hat{f}_t \rc \leq M \lp {\int_0^{\al_n} \sqrt{\log \lp \dfrac{c_{\xi_n}L_n}{\ep^2} + 1 \rp} d\ep + \tilde{C} \al_n \sqrt{\log n}} \rp.
$$
\end{proof}
Having obtained the conditional expectation of $\sup \hat{f}$, we next obtain the unconditional expectation of $\sup \hat{f}$. This is achieved in the following result and would be a crucial ingredient in the proof of the forthcoming Proposition~\ref{prop:3}.
\begin{lemma}\label{lem:ga_n}
Let Assumptions \ref{assum:tBM-1}, \ref{assum:tBM-2}, \ref{assum:tBM-3} and \ref{assum:tBM-4} hold. Then there exists constant $C$ such that
$$
\ga_n = \bex \lc \sup_{t \in [0,L_n]} \hat{f}_t \rc \leq C \dfrac{\sqrt{\log(L_n \vee n)}}{n^{\ga / 4}}.
$$
\end{lemma}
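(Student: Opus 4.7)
The plan is to take unconditional expectations in the conditional bound \eqref{eq:cond-E} established in the preceding lemma and combine this with the moment estimate of Lemma~\ref{lem:al_n}. By the tower property,
$$
\ga_n \leq M\, \bex \lc \int_0^{\al_n} \sqrt{\log \lp \dfrac{c_{\xi_n} L_n}{\ep^2} + 1 \rp} d\ep \rc + M \tilde{C}\, \sqrt{\log n}\, \bex[\al_n],
$$
and the second summand is immediately of order $\sqrt{\log n}/n^{\ga/4}$ by Lemma~\ref{lem:al_n}. For the entropy integral, the rescaling $\ep = \al_n u$ combined with the elementary bounds $\log(r/u^2+1) \leq \log(r+1) + \log(1/u^2+1)$ (valid for $r \geq 0$, $u \in (0,1]$) and $\sqrt{a+b} \leq \sqrt{a}+\sqrt{b}$ separates the deterministic and random scales:
$$
\int_0^{\al_n} \sqrt{\log \lp \dfrac{c_{\xi_n} L_n}{\ep^2} + 1 \rp} d\ep \leq \al_n \sqrt{\log(c_{\xi_n} L_n + 1)} + \al_n \sqrt{\log(1/\al_n^2 + 1)} + K \al_n,
$$
where $K := \int_0^1 \sqrt{\log(1/u^2+1)}\,du < \infty$ is a universal constant.

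The first and third summands above factor through $\bex[\al_n]$, and since $c_{\xi_n}$ grows at most polynomially in $n$ (as observed under Assumption~\ref{assum:arriv-dropouts-2} and verified explicitly in Propositions~\ref{lem:tBM-satisfy-1} and \ref{lem:tBM-satisfy-2}), one has $\log(c_{\xi_n} L_n + 1) \leq C\log(L_n \vee n)$. Applying Lemma~\ref{lem:al_n} therefore bounds these contributions by $O(\sqrt{\log(L_n \vee n)}/n^{\ga/4})$.

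The main obstacle is the middle term $\bex[\al_n \sqrt{\log(1/\al_n^2+1)}]$, which fails to factor through $\bex[\al_n]$ because the logarithmic factor diverges as $\al_n \downarrow 0$. I would resolve this by splitting at the threshold $1/n$. On $\{\al_n \geq 1/n\}$, $\sqrt{\log(1/\al_n^2+1)} \leq \sqrt{3\log n}$, yielding contribution $\sqrt{3\log n}\,\bex[\al_n] = O(\sqrt{\log n}/n^{\ga/4})$ by Lemma~\ref{lem:al_n}. On $\{\al_n < 1/n\}$, I would use the pointwise estimate $x\sqrt{\log(1/x^2+1)} \leq x\sqrt{\log 2} + \sqrt{2}\,x\sqrt{\log(1/x)}$ together with the observation that $x \mapsto x\sqrt{\log(1/x)}$ is increasing on $(0, e^{-1/2}]$ (its derivative $\sqrt{\log(1/x)} - 1/(2\sqrt{\log(1/x)})$ is nonnegative there), so that for $n$ large enough with $1/n < e^{-1/2}$, the integrand is bounded deterministically on this event by $C\sqrt{\log n}/n$. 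Because $\ga < 4$ implies $1/n \leq 1/n^{\ga/4}$, this piece is again $O(\sqrt{\log n}/n^{\ga/4})$, and summing the three contributions delivers the claimed bound $\ga_n \leq C\sqrt{\log(L_n \vee n)}/n^{\ga/4}$.
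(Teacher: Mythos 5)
Your proof is correct, but it follows a genuinely different route from the paper at the key step. The paper integrates the entropy integral by parts, rewrites the resulting boundary terms via the normal tail, and then bounds $\bex[\al_n\sqrt{\log(c_{\xi_n}L_n/\al_n^2+1)}]$ directly as an integral of tail probabilities, invoking the DKW-style bound of Assumption~\ref{assum:tBM-3} through two changes of variables (its displays \eqref{eq:UB-1}--\eqref{eq:UB-4}). You instead use the elementary subadditivity $\log(r/u^2+1)\leq\log(r+1)+\log(1/u^2+1)$ after rescaling $\ep=\al_n u$, which splits the entropy integral into $\al_n\sqrt{\log(c_{\xi_n}L_n+1)}$, $\al_n\sqrt{\log(1/\al_n^2+1)}$ and $K\al_n$; the first and third factor through $\bex[\al_n]$ and Lemma~\ref{lem:al_n}, and the only genuinely random logarithm is handled by the split at $\al_n=1/n$ (a deterministic bound on the small event using monotonicity of $x\sqrt{\log(1/x)}$ on $(0,e^{-1/2}]$, and $\sqrt{3\log n}\,\bex[\al_n]$ on the complement, with $1/n\le 1/n^{\ga/4}$ since $\ga<4$). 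This is more elementary: Assumption~\ref{assum:tBM-3} enters only through Lemma~\ref{lem:al_n}, and you avoid the paper's normal-tail estimates and change-of-variable computations, at no loss in the rate. Two caveats are shared with the paper rather than specific to you: (i) replacing $\log(c_{\xi_n}L_n+1)$ by $C\log(L_n\vee n)$ requires $c_{\xi_n}$ to grow at most polynomially in $n$, which Assumption~\ref{assum:tBM-1} does not state; the paper's own proof in fact ends with $\sqrt{\log\lp\lp c_{\xi_n}L_n\rp\vee n\rp}$, which is the form carried into Proposition~\ref{prop:3}, and you at least make the polynomial-growth hypothesis explicit; (ii) both arguments quote \eqref{eq:E-al_n} as stated, even though the proof of Lemma~\ref{lem:al_n} actually concludes with an extra $\sqrt{\log n}$ factor, so this minor internal discrepancy affects the paper's proof and yours equally and is not a gap in your argument.
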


\begin{proof}
Using integration by parts, and denoting $c_{\xi_n}L_n$ by $L_n'$, the first term in \eqref{eq:cond-E} yields
\begin{align}\label{eq:int-sqrtlog-ub-2}
\int_0^{\al_n} \sqrt{\log \lp \dfrac{L_n'}{\ep^2} + 1 \rp} d\ep &= \al_n \sqrt{\log \lp \dfrac{L_n'}{\al_n^2} + 1 \rp} + \int_0^{\al_n} \dfrac{L_n'/(L_n'+\ep^2)}{\sqrt{\log(\frac{L_n'}{\ep^2} + 1)}}  d\ep \nonumber\\
&= \al_n \sqrt{\log \lp \dfrac{L_n'}{\al_n^2} + 1 \rp} + \sqrt{L_n'} \int_{\sqrt{\log(\frac{L_n'}{\al_n^2} + 1)}}^{\infty} \dfrac{1}{\sqrt{e^{t^2} -1}} dt,
\end{align}
 where the last step is obtained by a change of variable ($\sqrt{\log(L_n'/\ep^2 + 1)} \mapsto t$). It is readily checked that 
$$
\dfrac{1}{\sqrt{e^{t^2} - 1}} \leq e^{-t^2/2} \sqrt{1+\dfrac{\al_n^2}{L_n'}} ~\text{ for}~t \geq \sqrt{\log\lp \dfrac{L_n'}{\al_n^2} + 1 \rp}.
$$  
Consequently we obtain from \eqref{eq:int-sqrtlog-ub-2}:
\beq\label{eq:int-sqrt-log-ub-3}
\int_0^{\al_n} \sqrt{\log \lp \dfrac{L_n'}{\ep^2} + 1 \rp} d\ep \leq \al_n \sqrt{\log \lp \dfrac{L_n'}{\al_n^2} + 1 \rp} + \sqrt{L_n'+\al_n^2} \int_{\sqrt{\log(L_n'/\al_n^2 + 1)}}^{\infty} e^{-t^2/2} dt.
\eeq
Now, \eqref{eq:int-sqrt-log-ub-3} can be represented using the standard normal distribution function as follows:
\beq\label{eq:int-sqrtlog-1}
\int_0^{\al_n} \sqrt{\log \lp \dfrac{L_n'}{\ep^2} + 1 \rp} d\ep = \al_n \sqrt{\log\lp \dfrac{L_n'}{\al_n^2} + 1 \rp} + \sqrt{ 2\pi \lp L_n'+\al_n^2 \rp} \lp 1-\Phi \lp \sqrt{\log \dfrac{L_n'}{\al_n^2} + 1} \rp \rp.
\eeq
Thus, from \eqref{eq:cond-E} and \eqref{eq:int-sqrtlog-1} we obtain:
\begin{multline}\label{eq:Borel-TIS-3}
\E \lc \sup_{t \in [0,L_n]} \hat{f}_t \rc \leq M \lp \al_n \sqrt{\log \lp \dfrac{L_n'}{\al_n^2} + 1 \rp} + \sqrt{2 \pi \lp L_n'+\al_n^2 \rp} \lp 1-\Phi \lp \sqrt{\log \lp \dfrac{L_n'}{\al_n^2}+1 \rp} \rp  \rp \right.\\ + \left.C \al_n \sqrt{\log n}.\right.
\end{multline}
Having completed the first step in our attempt to bound $\ga_n$, we now proceed to obtain an upper bound to the right hand side of \eqref{eq:Borel-TIS-3}. The expectation of the first term:
\begin{align*}
\bex\lc \al_n \sqrt{\log(L_n'/\al_n^2 + 1)} \rc &= \int_0^{\infty} \bp \lp \al_n \sqrt{\log\lp\dfrac{L_n'}{\al_n^2} + 1\rp} \geq x \rp dx\\ 
&= \int_{0}^{s_g} \bp \lp \al_n \geq {g}^{-1}(x) \rp dx, 
\end{align*}
where ${g}$ is the function $\tilde{g}(y)=y \sqrt{\log(L_n'/y^2 + 1)}$ restricted to the domain $[0, s_g]$ and $s_g$ is the point of global maxima of $\tilde{g}$, thereby making ${g}$ invertible. In addition it is readily checked by comparing values of $\tilde{g}'$ that $s_g < \sqrt{L_n'/(e-1)}$. Following the above steps we thus obtain:
\begin{align}\label{eq:UB-1}
\bex& \lc \al_n \sqrt{\log (L_n'/\al_n^2 + 1)} \rc = \int_0^{s_g} \bp \lp \al_n \geq t \rp g'(t) dt 
= \int_0^{s_g} \bp \lp \al_n \geq t \rp \dfrac{\log(\frac{L_n'}{t^2} + 1)- \frac{L_n'}{L_n'+t^2}}{\sqrt{\log(\frac{L_n'}{t^2} + 1)}}  \nonumber \\
&\leq \int_0^{k_0 \frac{\log n}{n}} \sqrt{\log\lp\frac{L_n'}{t^2} + 1\rp} dt  + \int_{k_0 \frac{\log n}{n}}^{s_g} \bp \lp \al_n \geq t \rp \sqrt{\log\lp\frac{L_n'}{t^2} + 1\rp} dt. 
\end{align}
The first term on the right hand side in \eqref{eq:UB-1} can be bounded above using \eqref{eq:int-sqrtlog-1} as follows:
\begin{multline*}
 \int_0^{k_0 \frac{\log n}{n}} \sqrt{\log \lp \frac{L_n'}{t^2} + 1 \rp} dt \\
 \leq  k_0 \dfrac{\log n}{n} \sqrt{\log\lp \dfrac{n^2 L_n'}{k^2 (\log n)^2} + 1 \rp} + \sqrt{ 2\pi \lp L_n'+k^2 \frac{(\log n)^2}{n^2} \rp} \lp 1-\Phi \lp \sqrt{\log \lp \dfrac{n^2 L_n'}{k^2 (\log n)^2} + 1\rp} \rp \rp.
\end{multline*}
It is readily checked by using the standard upper bound for normal tail probability that there exists a constant $C'$ such that the right hand side is bounded above by $C_1' \frac{\sqrt{\log(L_n' \vee n)}}{n^{\ga/4}}$ and thus we have:
\beq\label{eq:UB-2}
\int_0^{k_0 \frac{\log n}{n}} \sqrt{\log \lp \frac{L_n'}{t^2} + 1 \rp} dt  \leq C_1' \frac{\sqrt{\log(L_n' \vee n)}}{n^{\ga/4}}.
\eeq
In order to bound the second term in \eqref{eq:UB-1}, let us perform a change of variable manipulation, namely replace $n^{\ga}t^4$ by $e^{-z}$. We now obtain from Assumption~\ref{assum:tBM-3}:
\begin{align*}
&\int_{k_0 \frac{\log n}{n}}^{s_g} \bp \lp \al_n \geq t \rp  \sqrt{\log\lp\frac{L_n'}{t^2} + 1\rp} dt = \int_{0}^{s_g - k_0 \frac{\log n}{n}} k_1 e^{-k_2 4 n^{\ga} t^4 \wedge 2 k_3 n^{\ga} t^2} \sqrt{\log \lp \dfrac{L_n'}{(t+k_0\frac{\log n}{n})^2} + 1\rp}dt\\ 
&\leq \int_{0}^{s_g - k_0 \frac{\log n}{n}} k_1 e^{-k_2 4 n^{\ga} t^4 \wedge 2 k_3 n^{\ga} t^2} \sqrt{\log \lp \dfrac{L_n'}{t^2} + 1\rp}dt \\
&\leq \dfrac{k_1}{4 n^{\ga/4}} \int_{-\infty}^{\infty} \exp{\lp -4k_2e^{-z}\wedge 2k_3 n^{\ga/2} e^{-z/2} -\dfrac{z}{4} \rp} {\sqrt{\log\lp n^{\ga/2} L_n' e^{z/2} + 1 \rp}}dz\\
&\leq \dfrac{k_1}{4 n^{\ga/4}} \int_{-\infty}^{\infty} \exp{\lp -4k_2e^{-z}\wedge 2k_3 n^{\ga/2} e^{-z/2} -\dfrac{z}{4} \rp} {\sqrt{\dfrac{\ga}{2}\log n + \log\lp L_n' e^{z/2} + 1 \rp}}dz.
\end{align*}
Consequently we have:
\begin{multline}
 \int_{k_0 \frac{\log n}{n}}^{s_g} \bp \lp \al_n \geq t \rp \sqrt{\log\lp\frac{L_n'}{t^2} + 1\rp} dt \\ 
\leq \dfrac{k_1 \sqrt{\log (L_n' \vee n)}}{4 n^{\ga/4}} \int_{-\infty}^{\infty} \exp{\lp -4k_2e^{-z}\wedge 2k_3 n^{\ga/2} e^{-z/2} -\dfrac{z}{4} \rp} \sqrt{\frac{\ga}{2} + \frac{\log(L_n' e^{z/2} + 1)}{\log (L_n' \vee n)}} dz.
\end{multline}
It is readily checked that the integral on the right is finite and thus we have:
\beq\label{eq:UB-4}
\int_{k_0 \frac{\log n}{n}}^{s_g} \bp \lp \al_n \geq t \rp \sqrt{\log\lp\frac{L_n'}{t^2} + 1\rp} dt 
 \leq C_2' \dfrac{\sqrt{\log (L_n' \vee n)}}{n^{\ga/4}},
\eeq
for some generic constant $C_2'$. Using \eqref{eq:UB-2} and \eqref{eq:UB-4} in \eqref{eq:UB-1} we now obtain:
\beq\label{eq:UB-5}
\bex\lc \al_n \sqrt{\log (L_n/\al_n^2 + 1)} \rc \leq C_3' \lp\dfrac{\sqrt{\log(L_n' \vee n)}}{n^{\ga / 4}}\rp,
\eeq
where $C_3' = C_1'+C_2'$. For the second term in \eqref{eq:Borel-TIS-3}, we use the bound on the normal tail probability, namely, $1-\Phi(t) \leq \frac{e^{-t^2/2}}{t \sqrt{2 \pi}}$. Thus, we have
$$
\sqrt{2 \pi \lp L_n'+\al_n^2 \rp} \lp 1-\Phi \lp \sqrt{\log\lp  \dfrac{L_n'}{\al_n^2}+ 1 \rp} \rp  \rp \leq \dfrac{\al_n}{\sqrt{\log ({L_n'}+ {\al_n^2})}}.
$$
The right hand side can be bounded above by using the fact that $\frac{1}{\sqrt{\log(L_n' + \al_n^2)}} \leq \frac{1}{\sqrt{\log T_1'}}$, and the bound for $\bex[\al_n]$ achieved in Lemma~\ref{lem:al_n}. Consequently, combining \eqref{eq:E-al_n}, \eqref{eq:Borel-TIS-3} and \eqref{eq:UB-5} we have thus obtained
$$
\bex \lc \sup_{t \in [0,L_n]}  \hat{f_t}  \rc \leq C \lp\dfrac{\sqrt{\log \lp \lp c_{\xi_n}L_n\rp \vee n \rp}}{n^{\ga / 4}}\rp,$$
for some constant $C$.
\end{proof}
We finally arrive at the main result of this section, namely, in Proposition~\ref{prop:3} we state a general non-asymptotic probabilistic bound on the difference between a time-changed Brownian motion evaluated on a stochastic jump process and its fluid limit.
\begin{proposition}\label{prop:3}
Let Assumptions~\ref{assum:tBM-1}, \ref{assum:tBM-2}, \ref{assum:tBM-3} and \ref{assum:tBM-4} hold. Then there exist a constant $C$ such that for all $n \geq 1$ and $x > 0$:
\beq\label{eq:tBM-main}
\bp \lp \sup_{s \in [0,L_n]} \lln B_{\Xi_n(s)} - B_{\xi_n(s)} \rrn > C\dfrac{\sqrt{\log\lp \lp c_{\xi_n}L_n \rp \vee n\rp}}{n^{1/4}} + x \rp \leq 2 e^{-\frac{x^2 }{2 \ups_n^2}},
\eeq
where $\ups_n^2 = \sup_{s \in [0,L_n]} \bex[ |\Xi_n(s) - \xi_n(s)| ]$.
\end{proposition}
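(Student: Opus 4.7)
The plan is to apply the Borell--TIS concentration inequality to the conditionally Gaussian process $\hat{f}$ and then unfold the conditioning using the DKW-type tail bound of Assumption~\ref{assum:tBM-3}. I would first observe that, conditional on $\Xi_n$, the process $\hat{f}_s = B_{\xi_n(s)} - B_{\Xi_n(s)}$ is centered Gaussian with $\textnormal{Var}(\hat{f}_s \mid \Xi_n) = |\xi_n(s) - \Xi_n(s)|$, so the supremum of the conditional variance over $s$ equals $2\al_n^2$, with $\al_n$ as in \eqref{eq:al_n}. A one-sided Borell--TIS applied conditionally, combined with the elementary identity $\sup_s |\hat{f}_s| = \max(\sup_s \hat{f}_s,\, \sup_s (-\hat{f}_s))$ and a union bound over the two signs, would yield
\[
\bP \lp \sup_{s \in [0,L_n]} |\hat{f}_s| > \E\lc \sup_{s \in [0,L_n]} |\hat{f}_s|\rc + u \rp \leq 2 e^{-u^2/(4\al_n^2)}.
\]

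Lemma~\ref{lem:ga_n} already controls $\E[\sup_s \hat{f}_s]$ by $C \sqrt{\log((c_{\xi_n}L_n) \vee n)}/n^{\ga/4}$, and essentially the same argument yields the same scale for $\E[\sup_s |\hat{f}_s|]$; this supplies the deterministic drift term in \eqref{eq:tBM-main}. Setting $u = x$ and taking an unconditional expectation, I would split the sample space on the event $E_n := \{2\al_n^2 \leq \ups_n^2\}$. On $E_n$ the conditional Gaussian tail is bounded by $2 e^{-x^2/(2 \ups_n^2)}$; on $E_n^c$ Assumption~\ref{assum:tBM-3} supplies a residual probability of order $k_1 \exp(-k_2 n^{\ga} (\ups_n^2/2)^2 \wedge k_3 n^{\ga} \ups_n^2/2)$, which is negligible relative to the main term. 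Choosing the prefactor $C$ in the drift term of \eqref{eq:tBM-main} sufficiently large lets this residual be absorbed into the constants, producing the stated bound.

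The main technical difficulty is the passage from the random conditional variance $2\al_n^2$ to the deterministic quantity $\ups_n^2$: swapping the supremum and expectation gives $\ups_n^2 \leq \bex[2\al_n^2]$, so the inequality runs the wrong way for a direct substitution inside the Gaussian exponent. The splitting argument above circumvents this by exploiting the strong concentration of $\sup_s |\Xi_n - \xi_n|$ furnished by Assumption~\ref{assum:tBM-3}, rather than a pointwise moment comparison. Once this step is in place, the rest is bookkeeping: the tower property to combine the conditional Borell--TIS bound with Lemma~\ref{lem:ga_n}, and a two-sided union bound to recover $|\hat{f}_s|$ from $\hat{f}_s$.
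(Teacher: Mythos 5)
You have correctly identified the crux---the conditional variance of $\hat f$ given $\Xi_n$ is the random quantity $2\al_n^2$, while the exponent in \eqref{eq:tBM-main} involves the deterministic $\ups_n^2$, and $\ups_n^2\le \bex[2\al_n^2]$ runs the wrong way---but the patch you propose does not close the gap. Splitting on $E_n=\{2\al_n^2\le\ups_n^2\}$ leaves an additive remainder $\bp(E_n^c)=\bp\lp\sup_{t\in[0,L_n]}|\Xi_n(t)-\xi_n(t)|>\ups_n^2\rp$ that does not depend on $x$, whereas the right-hand side of \eqref{eq:tBM-main} is exactly $2e^{-x^2/(2\ups_n^2)}$ and tends to $0$ as $x\to\infty$; no choice of the drift prefactor $C$ can absorb an additive probability, since there is no additive slack or free constant on the right-hand side, and enlarging $C$ shrinks the event but does not remove the contribution of $E_n^c$, on which the conditional variance exceeds $\ups_n^2$. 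Worse, in the regimes where the proposition is used one has $\ups_n^2$ of order $n^{-\ga/2}$ (e.g.\ $\ups_n^2\le 1/(2\sqrt n)$ in Proposition~\ref{prop:BM-dropouts}), so Assumption~\ref{assum:tBM-3} with $\ep\approx\ups_n^2$ gives at best $\bp(E_n^c)\le k_1e^{-O(1)}$, a constant (and nothing at all if $\ups_n^2\le k_0\log n/n$, because of the offset in the assumption); this is not ``negligible relative to the main term.'' There is a second gap: the conditional Borell--TIS bound is centered at the \emph{random} conditional mean $\E[\sup_t|\hat f_t|]$, while Lemma~\ref{lem:ga_n} controls only the unconditional mean $\ga_n=\bex[\sup_t\hat f_t]$; neither ``taking an unconditional expectation'' nor the tower property converts a deviation inequality centered at a random quantity into one centered at the deterministic drift $C\sqrt{\log((c_{\xi_n}L_n)\vee n)}/n^{1/4}$. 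To run your conditional route you would need a pathwise bound on the conditional mean valid on the good event, e.g.\ \eqref{eq:cond-E} combined with $\al_n\le\ups_n/\sqrt2$ on $E_n$, which your sketch does not carry out.

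For comparison, the paper's proof does not condition at all: it computes $\bex[\hat f_t^2]=\bex[|\Xi_n(t)-\xi_n(t)|]$, so that $\sup_{t\in[0,L_n]}\bex[\hat f_t^2]=\ups_n^2$, and then applies Borell--TIS in one step to $\hat f$ with the unconditional mean $\ga_n$, which Lemma~\ref{lem:ga_n} bounds by the stated drift; no event splitting occurs and $\al_n$ never enters the final step. (One may question whether that one-shot unconditional application is fully justified for the mixture process $\hat f$, but it is a genuinely different argument from yours, and your proposed repair of the conditional route does not produce the inequality as stated.)
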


\begin{proof}Observe that we have:
\begin{align}\label{eq:E-f_t^2}
\E \lc \hat{f_t}^2 \rc &= \E \lc B_{\xi_n(t)}^2 + B_{\Xi_n(t)}^2 - 2 B_{\xi_n(t)} B_{\Xi_n(t)} \rc \nonumber\\
&= \lc \xi_n(t)+\Xi_n(t) - 2 \xi_n(t) \wedge \Xi_n(t) \rc \nonumber \\
&= \lc |\Xi_n(t) - \xi_n(t)| \rc.
\end{align} 
This implies
\beq\label{eq:si^2-ub}
\sup_{t \in [0,L_n]} \bex \lc  \hat{f}_t^2 \rc = \sup_{t \in [0,L_n]} \bex[|\Xi_n(t) - \xi_n(t)|] = \ups_n^2. 
\eeq
Recall $\ga_n := \bex \lc  \sup_{t \in [0,1]} \hat{f_t}  \rc$. Then by the Borell-TIS inequality (see e.g. \cite[Theorem 2.1.1]{adler}) we have:
\beq\label{eq:Borell-TIS}
\bp  \lp \sup_{t \in [0, L_n]} \lln \hat{f_t} \rrn > x + \ga_n  \rp \leq 2 \exp \lp \dfrac{-x^2}{2 \ups_n^2} \rp.
\eeq
%\beq
%\si^2 \leq \sup_{t\in [0,1]} \sqrt{t(1-t)/n} = \dfrac{1}{2\sqrt{n}}.
%\eeq
%Using \eqref{eq:si^2-ub} in \eqref{eq:Borell-TIS} we have
%$$
%P \lc \sup_{t \in [0,\infty)} \lln B_{F_n(t)} - B_t \rrn > x + \ga_n \rc \leq 2 \exp \lp {-x^2\sqrt{n}} \rp.
%$$
Now invoking Lemma~\ref{lem:ga_n} we obtain our desired result \eqref{eq:tBM-main}.
\end{proof}
{
Finally, we will require the following proposition for proving strong embeddings under Assumption~\ref{assum:arriv-dropouts-2}. Consider the following regularity condition.

\begin{assumption}\label{assum:tBM-nonstoch}
For every $n \geq 1$, let $\xi^{(n)}, \xi:[0,T]\mapsto \R$ satisfy:
\beq
\sup_{s \in [0,T]} \lln \xi^{(n)}(s) - \xi(s) \rrn = O \lp \dfrac{1}{\sqrt{n}} \rp.
\eeq
In addition, let $\xi^{(n)}$ and $\xi$ both be Lipschitz continuous with the Lipschitz coefficient of $\xi^{(n)}$ growing at most polynomially in $n$. 
\end{assumption}
\begin{proposition}\label{prop:tBM-nonstoch}
Let Assumption~\ref{assum:tBM-nonstoch} hold and $B^n, n\geq 1$ be any sequence of Brownian motions defined on a probability space $({\Omega}, {\cf}, \bp)$. Then there exists constants $C$, $K$ and $\la$ such that:
$$
\bp \lp \sup_{s \in [0,T]} \lln B_{\xi^{(n)}(s)} - B_{\xi(s)} \rrn > C \dfrac{\sqrt{\log n}}{n^{1/4}} + x \rp < K e^{-\la x^2 \sqrt{n}}.
$$
\end{proposition}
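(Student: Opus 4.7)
The idea is to view $g_s := B^n_{\xi^{(n)}(s)} - B^n_{\xi(s)}$ as a centered Gaussian process indexed by $s \in [0,T]$ and apply the Borell--TIS inequality, exactly in the spirit of Proposition~\ref{prop:3}, but with the added simplification that everything in sight is deterministic (no conditioning on a stochastic time change is needed). So the plan is to (i) compute the variance and canonical metric of $g$, (ii) control $\bex[\sup_s g_s]$ via Dudley's entropy bound, and (iii) invoke Borell--TIS.

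First, using the same computation as in~\eqref{eq:E-f_t^2}, $\bex[g_s^2] = |\xi^{(n)}(s) - \xi(s)|$, hence
\[
\ups_n^2 := \sup_{s\in[0,T]} \bex[g_s^2] = O\lp \tfrac{1}{\sqrt{n}} \rp.
\]
By the triangle inequality for $L^2$ norms, the diameter of $[0,T]$ in the canonical metric $d(s,t) = \sqrt{\bex[(g_s-g_t)^2]}$ satisfies $d(s,t) \leq 2\ups_n = O(n^{-1/4})$. For the entropy estimate, the inequality
\[
|g_s - g_t| \leq |B^n_{\xi^{(n)}(s)} - B^n_{\xi^{(n)}(t)}| + |B^n_{\xi(s)} - B^n_{\xi(t)}|
\]
gives $d(s,t) \leq \sqrt{|\xi^{(n)}(s) - \xi^{(n)}(t)|} + \sqrt{|\xi(s)-\xi(t)|} \leq 2\sqrt{c_n|s-t|}$, where $c_n$ is the larger of the two Lipschitz coefficients. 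Since $c_n$ grows at most polynomially in $n$, the metric entropy of $[0,T]$ satisfies $\log N(\ep) \leq C_1 \log n + 2 \log(1/\ep)$ for a constant $C_1$.

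Second, Dudley's bound (as in the derivation leading to~\eqref{eq:E-sup-ub}--\eqref{eq:int-sqrtlog-ub-2}) yields
\[
\bex\lc \sup_{s\in[0,T]} g_s \rc \leq M \int_0^{\ups_n} \sqrt{C_1 \log n + 2\log(1/\ep)}\, d\ep \leq C_2\, \ups_n \sqrt{\log n} + M \int_0^{\ups_n} \sqrt{2\log(1/\ep)}\, d\ep,
\]
and the final integral is $O(\ups_n \sqrt{\log(1/\ups_n)}) = O(\ups_n \sqrt{\log n})$ since $\ups_n = O(n^{-1/4})$. Combining, $\bex[\sup_s g_s] \leq C\, n^{-1/4} \sqrt{\log n}$ for some constant $C$.

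Finally, Borell--TIS (cf.~\eqref{eq:Borell-TIS}) gives
\[
\bp \lp \sup_{s\in[0,T]} |g_s| > C \dfrac{\sqrt{\log n}}{n^{1/4}} + x \rp \leq 2 \exp\lp -\dfrac{x^2}{2\ups_n^2} \rp \leq K e^{-\la x^2 \sqrt{n}},
\]
with $K = 2$ and $\la$ depending on the implicit constant in $\ups_n^2 = O(1/\sqrt{n})$. The only mildly delicate step is keeping track of the polynomial-in-$n$ Lipschitz growth in the entropy estimate, which must only contribute logarithmically so that $\bex[\sup g_s]$ remains of order $n^{-1/4}\sqrt{\log n}$; otherwise the argument is a direct deterministic analogue of Proposition~\ref{prop:3}.
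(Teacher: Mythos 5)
Your proposal is correct and takes essentially the same route as the paper: define $g_s = B^n_{\xi^{(n)}(s)} - B^n_{\xi(s)}$ as a centered Gaussian process, compute the pointwise variance and canonical-metric diameter ($O(n^{-1/4})$ via the $O(n^{-1/2})$ uniform closeness of $\xi^{(n)}$ and $\xi$), obtain an entropy bound $\log N(\ep) \lesssim \log n + \log(1/\ep)$ from the Lipschitz hypotheses, feed this into Dudley's bound to get $\bex[\sup_s g_s] = O(n^{-1/4}\sqrt{\log n})$, and finish with Borell--TIS. The only cosmetic differences are at the calculational level: you bound the diameter via the $L^2$ triangle inequality $d(s,t) \le \|g_s\|_2 + \|g_t\|_2 \le 2\ups_n$ and evaluate the Dudley integral by subadditivity of the square root, whereas the paper computes the canonical metric exactly from the ordered values of the four time arguments (giving a marginally tighter $\hat D \le \sqrt 2\,\ups_n$) and evaluates the entropy integral by an explicit integration by parts and a normal-tail bound. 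Neither variation changes the rate or the exponential tail, and both arguments are sound.
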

\begin{proof}
The key ingredient of the proof is again the Borel-TIS inequality. First, let us reuse the same notations as before;  viz., let $\hat{f_s}$ be the Gaussian process defined on $[0,T]$ by 
$$
\hat{f_s} = B_{\xi^{(n)}(s)} - B_{\xi(s)}.
$$
As before, the canonical metric for $\hat{f}$ in $({\Omega}, {\cf}, \bp)$ is given by \eqref{eq:can-metr-1} and let $\hat{D}$ denote the diameter of $[0,T]$ with respect to the canonical metric, i.e.,
$$
\hat{D} = \sup_{s,t \in [0,T]} \hat{d}(s,t).
$$
Let $\hat{N}(\ep)$ be the metric entropy defined by the smallest number of balls of diameter $\ep$ (with respect to the canonical metric $\hat{d}$) that cover $[0,T]$. Then from \cite[Theorem 1.3.3]{adler} there exists a universal constant $M$ such that 
\beq\label{eq:E-sup-ub-2}
\bex \lc \sup_{t \in [0,T]} \hat{f_t} \rc \leq M \int_0^{\hat{D}/2} {\lp \log \hat{N}(\ep) \rp}^{1/2} d\ep.
\eeq
It can be easily shown that the canonical metric $\hat{d}$ as defined in \eqref{eq:can-metr-1} satisfies:
\begin{multline*}
\hat{d}(s,t)^2 =  \lc \lln \xi^{(n)}(s)-\xi(s) \rrn + \lln \xi^{(n)}(t)-\xi(t) \rrn -2 (\xi^{(n)}(s) \wedge \xi(t) + \xi^{(n)}(s) \wedge \xi(t) \right.\\
 \left.- \xi^{(n)}(s) \wedge \xi(t) -  \xi^{(n)}(t) \wedge \xi(s)) \rc.
\end{multline*}
If the numbers $\xi(s),\xi^{(n)}(s),\xi(t),\xi^{(n)}(t)$ be arranged in ascending order form the vector $(d_1,d_2,d_3,d_4)$, then it can be shown that 
\beq\label{eq:dhat-2}
\hat{d}(s,t) = \sqrt{(d_4-d_3) + (d_2 - d_1)}.
\eeq
This implies that we have
$$
\hat{d}(s,t) \leq \sqrt{|\xi^{(n)}(t)-\xi(t)|+ |\xi^{(n)}(s) - \xi(s)|}.
$$
Consequently from Assumption~\ref{assum:tBM-nonstoch} we have
\beq\label{eq:diam-O}
\hat{D} = O \lp \dfrac{1}{n^{1/4}} \rp.
\eeq
In addition, \eqref{eq:dhat-2} also implies:
$$
\hat{d}(s,t) \leq \sqrt{|\xi^{(n)}(t)-\xi^{(n)}(s)|+ |\xi(t)-\xi(s)|}.
$$
Using the Lipschitz continuity of $\xi^{(n)}$ and $\xi$, we obtain that
\beq\label{eq:dhat-2-2}
\sup_{s, t \in [0,T]} \dfrac{\hat{d}(s,t)}{\sqrt{|s-t|}} = l_n,
\eeq
where $l_n$ grows polynomially in $n$. This implies $\hat{d}(s,t) \leq l_n \sqrt{|s-t|}$ for all $s,t \in [0,T]$. Thus $\{s \in [0,T]:|s-x_0| \leq \frac{\ep^2}{l_n^2} \}$ is contained in the $\ep-$ball around $x_0$. The length of this ball is thus at least $\frac{2 \ep^2}{l_n^2}$. Hence the number of $\ep-$balls which cover $[0,T]$ is atmost $\frac{T l_n^2}{2 \ep^2}$. This leads to an upper bound for $\hat{N}(\ep)$, namely
\beq\label{eq:N(ep)-ub-2}
\hat{N}(\ep) \leq \dfrac{T l_n^2}{2 \ep^2}.
\eeq 
Thus, using \eqref{eq:N(ep)-ub-2} we get from \eqref{eq:E-sup-ub-2}:
\beq\label{eq:Borell-TIS-2-2}
\bex \lc \sup_{t \in [0,T]} \hat{f}_t \rc \leq M \int_0^{\hat{D}/2} \sqrt{\log \lp \dfrac{T l_n^2}{\ep^2} \rp} d\ep.
\eeq
It is readily checked using integration by parts that
$$
\bex \lc \sup_{t \in [0,T]} \hat{f}_t \rc \leq M \lp \dfrac{\hat{D}}{2} \sqrt{\log \lp \dfrac{4 T l_n^2}{\hat{D}^2}\rp} + \int_0^{\hat{D}/2} \dfrac{1}{\sqrt{\log \frac{T l_n^2}{\ep^2}}} d\ep \rp .
$$
A change of variable $\sqrt{\log \frac{T l_n^2}{\ep^2}} \mapsto t$ in the integral on the right hand side yields
$$
\bex \lc \sup_{t \in [0,T]} \hat{f}_t \rc \leq M \lp \dfrac{\hat{D}}{2} \sqrt{\log \lp \dfrac{4T l_n^2}{\hat{D}^2}\rp} + \sqrt{2\pi T} l_n \lp 1- \Phi\lp \sqrt{2 \log \lp \dfrac{2l_n\sqrt{T}}{\hat{D}} \rp} \rp \rp \rp.
$$
The standard upper bound to the normal tail probability now gives:
\beq\label{eq:E-sup-f-ub-1}
\bex \lc \sup_{t \in [0,T]} \hat{f}_t \rc \leq M \lp \dfrac{\hat{D}}{2} \sqrt{\log \lp \dfrac{4 T l_n^2}{\hat{D}^2}\rp} + \dfrac{\hat{D}}{2} \dfrac{1}{\sqrt{2 \log \frac{2 l_n \sqrt{T}}{\hat{D}}}}\rp.
\eeq
Observe that since $l_n \geq 1$ and $\hat{D} \leq 2$ we have:
\beq\label{eq:ub-A}
\dfrac{1}{\sqrt{2 \log \frac{2 l_n \sqrt{T}}{\hat{D}}}} \leq \dfrac{1}{\sqrt{\log T}}
\eeq
In addition, since $x\log x \geq -1$ for all $x>0$:
\beq\label{eq:ub-B}
\dfrac{\hat{D}}{2} \sqrt{\log \lp \dfrac{4Tl_n^2}{\hat{D}^2} \rp} = \dfrac{1}{2} \sqrt{\hat{D}^2 \log(4Tl_n^2) - \hat{D}^2 \log \hat{D^2}} \leq \dfrac{1}{2} \sqrt{\hat{D}^2 \log(4Tl_n^2) + 1}.
\eeq
Using the inequalities \eqref{eq:ub-A} and \eqref{eq:ub-B} in the right hand side of \eqref{eq:E-sup-f-ub-1} we get:
\beq\label{eq:E-sup-f-ub-2}
\bex \lc \sup_{t \in [0,T]} \hat{f}_t \rc \leq M \lp \dfrac{1}{2} \sqrt{\hat{D}^2 \log(4Tl_n^2)+1} + \dfrac{\hat{D}}{2} \dfrac{1}{\sqrt{\log T}}\rp.
\eeq
Finally using \eqref{eq:diam-O} and \eqref{eq:dhat-2-2} in \eqref{eq:E-sup-f-ub-2} we obtain:
$$
\bex \lc \sup_{t \in [0,T]} \hat{f}_t \rc = O \lp \dfrac{\sqrt{\log n}}{n^{1/4}} \rp.
$$
Observe that 
$$
\sup_{s \in [0,T]} \bex \lc |f_t^2| \rc = \sup_{s \in [0,T]} \lln \xi^{(n)}(s)-\xi(s) \rrn = O\lp \dfrac{1}{\sqrt{n}} \rp.
$$
Then by the Borell-TIS inequality (see e.g. \cite[Theorem 2.1.1]{adler}) there exists constants $C$, $K$ and $\la$ such that:
$$
\bp \lp \sup_{s \in [0,1]} \lln B_{\xi^{(n)}(s)} - B_{\xi(s)} \rrn > C \dfrac{\sqrt{\log n}}{n^{1/4}} + x \rp < K e^{-\la x^2 \sqrt{n}}.
$$
\end{proof}
}
\section{A Strong Embedding for the Arrival Process}\label{sec:arrive}
In this section we derive a strong embedding for the arrival process. The following proposition is an extension of Theorem~\ref{thm:KMT-part-sum} when the length of the random walk is provided by a time-varying not necessarily determinstic function.
\begin{proposition}\label{prop:1}
Let $X_1, \ldots, X_n$ be iid samples from a distribution which admits existence of a moment generating function in a neighborhood of zero. Let $\mu$ and $\si$ denote the mean and standard deviation respectively of this distrbution. Let $J_n :[0, \infty) \mapsto \{1,\ldots,n\}$ be any process. Then there exists a standard Brownian motion $B$ and a version of $X_1, \ldots, X_n$, along with constants $C_1$, $K_1$ and $\la_1$ such that for all $x>0$ we have:
$$
\bp \lc \sup_{t \in [0,\infty)}  \lln \dfrac{1}{\sqrt{n}} \sum_{i=1}^{J_n(t)} \lp X_i - \mu \rp - \si B_{\frac{J_n(t)}{n}} \rrn > C_1 \dfrac{\log n}{\sqrt{n}} + x\rc < K_1 e^{-\la_1 x \sqrt{n}}.
$$
\end{proposition}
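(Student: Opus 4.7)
\textbf{Proof plan for Proposition~\ref{prop:1}.} The plan is to reduce the statement to a direct application of the KMT embedding for random walks (Theorem~\ref{thm:KMT-part-sum}) via standardization and Brownian rescaling, exploiting the crucial fact that $J_n(t)$ takes values only in the finite set $\{1,\ldots,n\}$.

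First, I would standardize the summands by setting $Y_i := (X_i - \mu)/\sigma$, so that $Y_1,Y_2,\ldots$ are iid with mean $0$, variance $1$, and moment generating function finite in a neighborhood of the origin. Theorem~\ref{thm:KMT-part-sum} then produces a Brownian motion $\tilde B$ and a version of the $Y_i$ (equivalently of the $X_i$) on a common probability space such that
$$
\mathbb{P}\!\left(\sup_{1 \leq k \leq n}\left|\sum_{i=1}^{k} Y_i - \tilde B_k\right| > C\log n + y\right) < K e^{-\lambda y}, \qquad y>0,
$$
for constants $C,K,\lambda$ depending only on the law of $Y_1$. Next I would define $B_s := n^{-1/2}\tilde B_{ns}$, which by the Brownian scaling property is again a standard Brownian motion; in particular $\sigma B_{k/n} = (\sigma/\sqrt{n})\tilde B_k$ for every $k$.

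The key observation is that $J_n(t)\in\{1,\ldots,n\}$ for all $t\ge 0$, so the supremum over $t$ collapses to a supremum over integers:
$$
\sup_{t\in[0,\infty)}\left|\frac{1}{\sqrt{n}}\sum_{i=1}^{J_n(t)}(X_i-\mu) - \sigma B_{J_n(t)/n}\right|
= \frac{\sigma}{\sqrt{n}}\sup_{t\in[0,\infty)}\left|\sum_{i=1}^{J_n(t)}Y_i - \tilde B_{J_n(t)}\right|
\leq \frac{\sigma}{\sqrt{n}}\sup_{1\leq k\leq n}\left|\sum_{i=1}^{k}Y_i - \tilde B_k\right|.
$$
Applying the KMT tail bound with $y=x\sqrt{n}/\sigma$ then yields
$$
\mathbb{P}\!\left(\frac{\sigma}{\sqrt{n}}\sup_{1\leq k\leq n}\left|\sum_{i=1}^{k}Y_i - \tilde B_k\right| > \frac{C\sigma\log n}{\sqrt{n}} + x\right) < K\exp\!\left(-\frac{\lambda x\sqrt{n}}{\sigma}\right),
$$
which is precisely the desired inequality with $C_1 = C\sigma$, $K_1 = K$, and $\lambda_1 = \lambda/\sigma$.

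There is no serious obstacle here: the work is essentially bookkeeping once one recognizes that the arbitrary time change $J_n$ does nothing harmful because its range is already contained in the integer grid on which the KMT coupling controls deviations uniformly. The only points requiring a modicum of care are (i) standardizing to apply KMT in its stated form, (ii) using Brownian scaling to align $\tilde B_k$ with $\sqrt{n}\,B_{k/n}$, and (iii) absorbing $\sigma$ into the constants so that the final exponential rate in $x$ carries the correct $\sqrt{n}$ factor. This $\sqrt{n}$ factor in the exponent is what makes the proposition the natural non-asymptotic companion to the forthcoming strong embeddings for the workload and queue length processes.
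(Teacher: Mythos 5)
Your proposal is correct and follows essentially the same route as the paper's own proof: apply the KMT embedding (Theorem~\ref{thm:KMT-part-sum}) to the standardized increments, rescale the Brownian motion via $B_s := n^{-1/2}\tilde B_{ns}$, observe that the supremum over $t$ collapses to a supremum over $k\in\{1,\ldots,n\}$ because $J_n$ has finite range, and substitute $y = x\sqrt{n}/\sigma$ in the tail bound. If anything, your explicit pathwise definition of $B$ is slightly cleaner than the paper's phrasing ``another version of $\tilde B$ such that $\sqrt{n}B_{k/n}\stackrel{d}{=}\tilde B_k$,'' which reads as a distributional identity but is actually needed as a pathwise coupling.
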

\begin{proof}
From Theorem~\ref{thm:KMT-part-sum} we have that there exists a standard Brownian motion $\tilde{B}$, {a version of $X_1, \ldots, X_n$}, along with constants $C$, $K$ and $\la$ (depending on the distribution of $V$) such that for all $x>0$ we have:
\beq\label{eq:2-a}
\bp \lc \sup_{0 \leq k \leq n} \lln \sum_{i=1}^k \lp \dfrac{X_i - \mu}{\si} \rp - \tilde{B}_k \rrn > C {\log n} + x \rc < K e^{-\la x},
\eeq
where $\sum_{i=1}^k$ is defined to be the null sum for $k=0$. Since $A_n$ takes values in $\{1,\ldots, n\}$, we may replace the supremum in the left hand side of \eqref{eq:2-a} by a supremum over $k$ taking values in $\lbrace A_n(t), t \in [0,\infty) \rbrace$. Consider another version $B$ of the standard Brownian motion $\tilde{B}$ such that 
$$
\sqrt{n}B_{k/n} \stackrel{d}{=} \tilde{B}_{k}
$$
Then from \eqref{eq:2-a} there exist constants $C_1$, $K_1$ and $\la_1$ such that for all $x>0$ the desired strong-embedding holds:
$$
\bp \lc \sup_{t \in [0,\infty)}  \lln \dfrac{1}{\sqrt{n}} \sum_{i=1}^{J_n(t)} \lp {X_i - \mu}\rp - \si B_{\frac{A_n(t)}{n}}  \rrn > C_1 \dfrac{\log n}{\sqrt{n}} + x \rc < K_1 e^{-\la_1 x \sqrt{n}}.
$$
\end{proof}

The following proposition is a consequence of Theorem~\ref{thm:KMT-emp-pro} and holds for any general distribution as opposed to the uniform distributional assumption made in Theorem~\ref{thm:KMT-emp-pro}. %We could not locate this general result in the literature and have included a proof for completeness.
\begin{proposition}\label{prop:2}
Let Assumption~\ref{assum:arriv-dropouts} hold with $p=1$, that is, let us consider the $\Delta_{(i)}/G/1$ model as explained in Remark~\ref{rem:Delta_i}. Then for every $n \geq 1$ there exists a Brownian bridge $\lbrace{B^{\textnormal{br},n}};t \in [0,1]\rbrace$ {and a version of $T_1, \ldots, T_n$} along with constants $C_2$, $K_2$ and $\la_2$ such that for all $x>0$ we have:
\beq\label{eq:2-k}
\bp \lp \sup_{t \in [0, \infty)} \lln \sqrt{n} \lp G_n(t) - G(t) \rp - B_{G(t)}^{\textnormal{br},n} \rrn > C_2 \dfrac{\log n}{\sqrt{n}} + x \rp < K_2 e^{-\la_2 x \sqrt{n}}.
\eeq
\end{proposition}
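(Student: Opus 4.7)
The plan is to reduce the general $G$ case to the uniform empirical process result of Theorem~\ref{thm:KMT-emp-pro} via the probability integral transform, and then convert the tail scaling. First I would start on a probability space supporting a sequence of iid Uniform$[0,1]$ random variables $U_1,\ldots,U_n$ together with a Brownian bridge $B^{\textnormal{br},n}$ as furnished by Theorem~\ref{thm:KMT-emp-pro}, so that for constants $C$, $K$, $\la$ and every $y>0$,
\[
\bp\lp \sup_{s \in [0,1]} \sqrt{n}\,\lln \sqrt{n}(F_n(s)-s) - B^{\textnormal{br},n}_s \rrn > C\log n + y \rp < K e^{-\la y},
\]
where $F_n$ is the empirical cdf of the $U_i$'s. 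Define the version $T_i := G^{-1}(U_i)$, using the generalized inverse $G^{-1}(u):=\inf\{t:G(t)\geq u\}$; since $G^{-1}(u)\leq t$ iff $u\leq G(t)$, the $T_i$'s are iid with distribution $G$ (this is the only place independence and the distributional property of $T_i$ are needed).

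Next I would identify the empirical cdf of the $T_i$'s with the uniform empirical cdf evaluated at $G(t)$:
\[
G_n(t) = \frac{1}{n}\sum_{i=1}^n \mathbf{1}_{\{T_i\leq t\}} = \frac{1}{n}\sum_{i=1}^n \mathbf{1}_{\{U_i\leq G(t)\}} = F_n(G(t)).
\]
Consequently $\sqrt{n}(G_n(t)-G(t)) = \sqrt{n}(F_n(G(t))-G(t))$, so for every realization
\[
\sup_{t\in[0,\infty)} \lln \sqrt{n}(G_n(t)-G(t)) - B^{\textnormal{br},n}_{G(t)} \rrn \leq \sup_{s\in[0,1]} \lln \sqrt{n}(F_n(s)-s) - B^{\textnormal{br},n}_s \rrn,
\]
because the range $\{G(t):t\in[0,\infty)\}$ is contained in $[0,1]$. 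This deterministic domination is the key step; everything else is arithmetic.

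Finally I would convert scales. Substitute $y = x\sqrt{n}$ into the KMT bound displayed above and divide the inequality inside the probability by $\sqrt{n}$, to get
\[
\bp\lp \sup_{s\in[0,1]}\lln \sqrt{n}(F_n(s)-s) - B^{\textnormal{br},n}_s \rrn > \tfrac{C\log n}{\sqrt{n}} + x \rp < K e^{-\la x \sqrt{n}}.
\]
Combined with the domination from the previous step, this yields the claim with $C_2=C$, $K_2=K$, $\la_2=\la$. I do not anticipate a genuine obstacle: the only point requiring care is using the generalized (rather than classical) inverse so that no continuity/strict monotonicity hypothesis on $G$ is needed, and keeping track that the tail exponent $e^{-\la y}$ with $y=x\sqrt{n}$ produces the stated $e^{-\la_2 x \sqrt{n}}$ decay while the additive $C\log n$ term, after being divided by $\sqrt{n}$, becomes the $C_2\log n/\sqrt{n}$ term in the statement.
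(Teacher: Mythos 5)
Your proof is correct and follows the same essential route as the paper — reduce to the uniform empirical process result of Theorem~\ref{thm:KMT-emp-pro} via the probability integral transform, observe that the supremum over $\{G(t) : t \in [0,\infty)\}\subset[0,1]$ is dominated by the supremum over $[0,1]$, and rescale $y = x\sqrt{n}$ to get the stated tail. The one place you genuinely improve on the paper's execution is the direction of the transform: you define $T_i := G^{-1}(U_i)$ using the left-continuous generalized inverse $G^{-1}(u) = \inf\{t : G(t) \geq u\}$, for which the Galois connection $G^{-1}(u) \leq t \Longleftrightarrow u \leq G(t)$ gives the exact pathwise identity $G_n(t) = F_n(G(t))$ with no hypothesis on $G$. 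The paper instead goes $T \to U$ by taking $G(T_i)$ to be uniform (which implicitly uses continuity of $G$) and defines a right-continuous generalized inverse $G^{-1}(t) := \sup\{x : G(x) \leq t\}$, which then forces an extra almost-sure argument (their equation~(\ref{eq:2-h})) to handle flat stretches of $G$ where $G_n(G^{-1}(G(s)))$ might otherwise differ from $G_n(s)$. Your direction sidesteps both of those wrinkles and handles general distribution functions cleanly; otherwise the two arguments are the same.
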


\begin{proof}
We will first consider the random variables $\{ G(T_i) : i=1,\ldots,n \}$. Observe that the $G(T_i)$'s are independent and identically distribued as $U[0,1]$ random variables. Consider the corresponding empirical distribution function $F_n$ given by:
\beq\label{eq:2-b}
F_n(t) = \dfrac{1}{n} \sum_{i=1}^n \mathbf{1}_{\{ G(T_i) \leq t \}}.
\eeq
Then by a little modification of \eqref{eq:strong-approx-1} there exist a Brownian bridge $B^{\textnormal{br}, n}$ (observe by Remark~\ref{rem:bridge-diff-n}, the Brownian bridge under consideration depends on $n$), constants $C_2$, $K_2$ and $\la_2$ such that:
\beq\label{eq:2-e}
\bp \lp \sup_{t \in [0,1]} \lln \sqrt{n} \lp F_n(t)-t \rp - B_t^{\textnormal{br},n} \rrn > C_2 \dfrac{\log n}{\sqrt{n}} + x\rp < K_2 e^{-\la_2 x \sqrt{n}}.
\eeq
Let the inverse distribution function $G^{-1}$ be defined as:
\beq\label{eq:2-1}
G^{-1} (t) := \sup \lcl x \in \R: G(x) \leq t \rcl.
\eeq
Observe that owing to our definition of $G^{-1}$ we have:
\beq\label{eq:2-c}
G(x) \leq t ~~\textnormal{iff}~~x \leq G^{-1}(t).
\eeq
Applying relation \eqref{eq:2-c} in \eqref{eq:2-b}, and using \eqref{eq:2-d} we obtain:
\beq\label{eq:2-f}
F_n(t) = \dfrac{1}{n} \sum_{i=1}^n \mathbf{1}_{\{ T_i \leq G^{-1}(t)\}} = G_n (G^{-1}(t)).
\eeq
Inserting \eqref{eq:2-f} in \eqref{eq:2-e} yields:
\beq\label{eq:2-g}
\bp \lp \sup_{t \in [0,1]} \lln \sqrt{n} \lp G_n (G^{-1}(t)) - t \rp - B_t^{\textnormal{br},n} \rrn > C_2 \dfrac{\log n}{\sqrt{n}} + x \rp < K_2 e^{-\la_2 x \sqrt{n}}.
\eeq
In addition, observe that for any $s_1 < s_2$ such that $G(s_1) = G(s_2)$ we have for all $i = 1, \ldots, n$:
$$
P \lc T_i \in (s_1, s_2] \rc = 0.
$$
This implies even though $G^{-1}(G(s)) \geq s$, we still have:
$$
\mathbf{1}_{\{T_i \leq G^{-1}(G(s))\}} = \mathbf{1}_{\{T_i \leq s\}} ~ \textnormal{a.s.}
$$
Consequently we obtain:
\beq\label{eq:2-h}
G_n(G^{-1}(G(s))) = \dfrac{1}{n} \sum_{i=1}^n \mathbf{1}_{\{ T_i \leq G^{-1}(G(s)) \}} = \dfrac{1}{n} \sum_{i=1}^n \mathbf{1}_{\{T_i \leq s\}} = G_n(s) ~\textnormal{a.s.}
\eeq
We utilize this property in \eqref{eq:2-g}. Notice that 
$$
\lcl G(s) : s \in [0, \infty) \rcl \subset [0,1].
$$
Thus we have the following inequality between the suprema of the same function over these two sets:
\beq\label{eq:2-i}
\sup_{t \in [0,1]} \lln \sqrt{n} \lp G_n(G^{-1}(t)) - t \rp - B_t^{\textnormal{br},n} \rrn \geq \sup_{s \in [0, \infty)} \lln \sqrt{n} \lp G_n(G^{-1}(G(s))) - G(s) \rp - B_{G(s)}^{\textnormal{br},n} \rrn. 
\eeq
Inserting \eqref{eq:2-h} in \eqref{eq:2-i} we thus get
\beq\label{eq:2-j}
\sup_{t \in [0,1]} \lln \sqrt{n} \lp G_n (G^{-1}(t)) - t \rp - B_t^{\textnormal{br},n} \rrn \geq \sup_{s \in [0, \infty)} \lln \sqrt{n} \lp G_n(s) - G(s) \rp - B_{G(s)}^{\textnormal{br},n} \rrn~\textnormal{a.s.}
\eeq
Looking at the complement probability in the left hand side of \eqref{eq:2-g} we obtain as a result of \eqref{eq:2-j}:
\begin{align*}
1-K_2e^{-\la_2 x \sqrt{n}} &< \bp \lp \sup_{t \in [0,1]} \lln \sqrt{n} \lp G_n(G^{-1}(t))-t \rp - B_{t}^{\textnormal{br},n} \rrn \leq C_2 \dfrac{\log n}{\sqrt{n}} + x \rp \\
&\leq \bp \lp \sup_{s \in [0,\infty)} \lln \sqrt{n}  \lp G_n(s) - G(s) \rp - B_{G(s)}^{\textnormal{br},n}\rrn \leq C_2 \dfrac{\log n}{\sqrt{n}} + x \rp.
\end{align*}
This yields our desired result \eqref{eq:2-k}.
\end{proof}

{
\begin{remark}\label{rem:const-str-emp}
Observe that the constants $C_2$, $K_2$ and $\la_2$ in \eqref{eq:2-e} do not depend on $G$ and that the same constants satisfy \eqref{eq:2-k}. Thus owing to Remark~\ref{rem:KMT-emp-pro-const} we have that $C_2 =100$, $K_2 = 10$ and $\la_2 = 1/50$ satisfy \eqref{eq:2-k}.
\end{remark}
We now adapt the statement of Proposition~\ref{prop:2} under Assumption~\ref{assum:arriv-dropouts-2}.
\begin{corollary}\label{cor:prop-2-cor}
Let Assumption~\ref{assum:arriv-dropouts-2} hold. Then for every $n \geq 1$ there exists a Brownian bridge $\lbrace{\tilde{B}^{\textnormal{br},n}};t \in [0,1]\rbrace$ {and a version of $T_1, \ldots, T_n$} such that for all $x>0$, the same constants $C_2$, $K_2$ and $\la_2$ as in Proposition~\ref{prop:2} satisfy:
$$
\bp \lp \sup_{t \in [0, \infty)} \lln \sqrt{n}\lp G_n^{(n)}(t) - G^{(n)}(t) - \tilde{B}_{G^{(n)}(t)}^{\textnormal{br},n} \rp \rrn > C_2 \dfrac{\log n}{\sqrt{n}} + x \rp < K_2 e^{-\la_2 x \sqrt{n}}.
$$
\end{corollary}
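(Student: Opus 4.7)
The plan is to observe that the proof of Proposition~\ref{prop:2} goes through essentially verbatim with $G$ replaced by $G^{(n)}$, and that the universality of the KMT constants (flagged in Remark~\ref{rem:const-str-emp}) lets the \emph{same} $C_2, K_2, \la_2$ be used uniformly in $n$. First, I would note that under Assumption~\ref{assum:arriv-dropouts-2} each $G^{(n)}$ is Lipschitz (hence continuous), so if $T_1,\ldots,T_n$ are iid with cdf $G^{(n)}$, then $U_i^{(n)} := G^{(n)}(T_i)$ are iid $U[0,1]$. Applying Theorem~\ref{thm:KMT-emp-pro} to the empirical process
\[
F_n^{(n)}(t) := \frac{1}{n}\sum_{i=1}^n \mathbf{1}_{\{U_i^{(n)} \leq t\}}, \qquad t \in [0,1],
\]
provides a version of $T_1,\ldots,T_n$ and a Brownian bridge $\tilde{B}^{\textnormal{br},n}$ such that
\[
\bp\lp \sup_{t\in[0,1]} \lln \sqrt{n}\lp F_n^{(n)}(t) - t \rp - \tilde{B}^{\textnormal{br},n}_t \rrn > C_2\dfrac{\log n}{\sqrt{n}} + x \rp < K_2 e^{-\la_2 x \sqrt{n}},
\]
with the same constants as in Proposition~\ref{prop:2}, since those constants depend only on the embedding of the uniform empirical process.

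Second, I would execute the change-of-variable step exactly as in Proposition~\ref{prop:2}. Defining the generalized inverse $\lp G^{(n)} \rp^{-1}(t) := \sup\{x \in \R : G^{(n)}(x) \leq t\}$, the equivalence $G^{(n)}(x) \leq t \iff x \leq \lp G^{(n)} \rp^{-1}(t)$ yields
\[
F_n^{(n)}(t) = G_n^{(n)}\lp \lp G^{(n)} \rp^{-1}(t) \rp.
\]
Moreover, for any $s_1 < s_2$ with $G^{(n)}(s_1) = G^{(n)}(s_2)$ we have $\bp(T_i \in (s_1,s_2]) = 0$, so $G_n^{(n)}\lp \lp G^{(n)} \rp^{-1}\lp G^{(n)}(s) \rp \rp = G_n^{(n)}(s)$ almost surely.

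Third, since $\{G^{(n)}(s) : s \in [0,\infty)\} \subset [0,1]$, the almost-sure inequality
\[
\sup_{t \in [0,1]} \lln \sqrt{n}\lp F_n^{(n)}(t) - t \rp - \tilde{B}^{\textnormal{br},n}_t \rrn \;\geq\; \sup_{s \in [0,\infty)} \lln \sqrt{n}\lp G_n^{(n)}(s) - G^{(n)}(s) \rp - \tilde{B}^{\textnormal{br},n}_{G^{(n)}(s)} \rrn
\]
holds, and comparing the complementary events in the KMT inequality above produces the stated bound. There is no substantial obstacle: the argument is essentially bookkeeping, and the only point worth stressing is that Theorem~\ref{thm:KMT-emp-pro}'s constants are intrinsic to the uniform case, so replacing $G$ by the $n$-dependent cdf $G^{(n)}$ does not degrade the constants (in contrast with, say, a strong approximation whose constants depend on moment bounds of the underlying distribution).
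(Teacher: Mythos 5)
Your proof is correct and is essentially the paper's argument, just written out in full: the paper simply invokes Proposition~\ref{prop:2} applied to the distribution $G^{(k)}$ (valid since Remark~\ref{rem:const-str-emp} establishes the constants do not depend on $G$), then specializes to $k=n$, whereas you re-run the proof of Proposition~\ref{prop:2} from Theorem~\ref{thm:KMT-emp-pro} with $G$ replaced by $G^{(n)}$. The content — probability-integral transform to uniforms, KMT embedding, generalized-inverse change of variable, null-event argument, and the observation that the KMT constants are intrinsic to the uniform case — is identical.
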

\begin{proof}
Observe from Remark~\ref{rem:const-str-emp} for every $k \geq 1$, there exists a Brownian bridge $B^{\textnormal{br},k,n}$ such that for all $x>0$, the same constants $C_2$, $K_2$ and $\la_2$ as in Proposition~\ref{prop:2} satisfy:
$$
\bp \lp \sup_{t \in [0, \infty)} \lln \sqrt{n} \lp G_n^{(k)}(t) - G^{(k)}(t) \rp - B_{G^{(k)}(t)}^{\textnormal{br},k,n} \rrn > C_2 \dfrac{\log n}{\sqrt{n}} + x \rp < K_2 e^{-\la_2 x \sqrt{n}}.
$$
In particular, for $k=n$ and writing $B^{\textnormal{br},n,n}$ as $\tilde{B}_{G^{(n)}(t)}^{\textnormal{br},n}$ we have:
$$
\bp \lp \sup_{t \in [0, \infty)} \lln \sqrt{n} \lp G_n^{(n)}(t) - G^{(n)}(t) \rp - \tilde{B}_{G^{(n)}(t)}^{\textnormal{br},n} \rrn > C_2 \dfrac{\log n}{\sqrt{n}} + x \rp < K_2 e^{-\la_2 x \sqrt{n}}.
$$
\end{proof}}

In the sequel, we will require control over Brownian motion evaluated at the fluid-scaled arrival process $A_n/n$ and the corresponding fluid limit. This is achieved for $U[0,1]$ distributed time epochs in the following proposition.
\begin{proposition}\label{prop:BM-dropouts}
Let $T_1, \ldots, T_n$ be iid samples from the $U[0,1]$ distribution. Let $\hat{A}_n$ be the arrival process with dropouts given by
$$
\hat{A}_n(t) = \sum_{i=1}^{nF_n(t)} \zeta_i,
$$
where $F_n$ is the empirical distribution function corresponding to the sample $T_1, \ldots T_n$, and $\zeta_i$ are iid $\textnormal{Ber}(p)$. Let $B$ be a Brownian motion independent of $T_1, \ldots, T_n$ and $\zeta_1, \ldots, \zeta_n$. for each $n \geq 1$. Then, there exist constants $C_3$, $K_3$ and $\la_3$ such that
$$
\bp \lp \sup_{s \in [0,1]} \lln  B_{\frac{\hat{A}_n(s)}{n}} - B_{ps} \rrn > C_3 \dfrac{\sqrt{\log n}}{n^{1/4}}  + x \rp \leq K_3 e^{-\la_3 x^2 \sqrt{n}}
$$
\end{proposition}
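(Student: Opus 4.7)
The plan is to invoke Proposition~\ref{prop:3} with the choices $\xi_n(s) = ps$ and $\Xi_n(s) = \hat{A}_n(s)/n$ on the interval $[0,L_n]$ with $L_n = 1$. Proposition~\ref{lem:tBM-satisfy-1} has already verified Assumptions~\ref{assum:tBM-1}, \ref{assum:tBM-2} and~\ref{assum:tBM-3} for exactly this pair (with Lipschitz coefficient $c_{\xi_n} = p$, bound $D = 1$, and DKW-style tail $\bp(\sup_s|\hat A_n(s)/n - ps| > \ep) \leq k_1 e^{-k_2 n \ep^2}$, so that $\ga = 1$). Assumption~\ref{assum:tBM-4} is immediate from the hypothesis that $B$ is independent of the $T_i$'s and $\zeta_i$'s; in particular $B$ is independent of $\Xi_n$, which makes the conditional analysis used in the proof of Proposition~\ref{prop:3} valid.

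Applying Proposition~\ref{prop:3} then yields, for some universal constant $C > 0$,
$$
\bp\lp \sup_{s \in [0,1]} \lln B_{\hat A_n(s)/n} - B_{ps} \rrn > C \dfrac{\sqrt{\log(p \vee n)}}{n^{1/4}} + x \rp \leq 2\,e^{-x^2/(2 \ups_n^2)},
$$
where $\ups_n^2 = \sup_{s \in [0,1]} \bex\lc |\hat A_n(s)/n - ps| \rc$. Since $\log(p \vee n) \leq \log n$ for all $n \geq 2$, the deterministic prefactor is of the required form $C_3 \sqrt{\log n}/n^{1/4}$.

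It remains to estimate $\ups_n^2$. Using the same DKW-style inequality from the proof of Proposition~\ref{lem:tBM-satisfy-1}, namely $\bp(\sup_s|\hat A_n(s)/n - ps| > \ep) \leq k_1 e^{-k_2 n \ep^2}$, I would write
$$
\ups_n^2 \;\leq\; \bex\lc \sup_{s \in [0,1]} \lln \tfrac{\hat A_n(s)}{n} - ps \rrn \rc \;=\; \int_0^{\infty} \bp\lp \sup_{s} \lln \tfrac{\hat A_n(s)}{n} - ps \rrn > \ep \rp d\ep \;\leq\; \int_0^{\infty} k_1 e^{-k_2 n \ep^2} d\ep \;=\; \dfrac{k_1}{2}\sqrt{\dfrac{\pi}{k_2 n}},
$$
so that $\ups_n^2 \leq \kappa/\sqrt{n}$ for a constant $\kappa$ independent of $n$. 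Substituting this into the exponent gives $e^{-x^2/(2 \ups_n^2)} \leq e^{-x^2 \sqrt{n}/(2\kappa)}$, which produces the desired bound $K_3 e^{-\la_3 x^2 \sqrt{n}}$ with $K_3 = 2$ and $\la_3 = 1/(2\kappa)$.

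No real obstacle is anticipated: the entire argument is a careful bookkeeping of the machinery already developed in Section~\ref{sec:bm}. The only subtle point is recognizing that the very same DKW bound used to verify Assumption~\ref{assum:tBM-3} also controls $\ups_n^2$ at the rate $1/\sqrt{n}$, which is precisely what converts the generic exponent $-x^2/(2\ups_n^2)$ from Proposition~\ref{prop:3} into the sharper rate $-\la_3 x^2 \sqrt{n}$ asserted here.
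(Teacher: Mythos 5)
Your proposal is correct and follows the same backbone as the paper: both invoke Proposition~\ref{prop:3} with the hypotheses verified by Proposition~\ref{lem:tBM-satisfy-1} (so $\gamma = 1$, $c_{\xi_n} = p$, $L_n = 1$), reducing the problem to bounding $\ups_n^2$. The one place you diverge is in that last estimate. You upper-bound $\ups_n^2 = \sup_s \bex|\hat A_n(s)/n - ps|$ by $\bex[\sup_s|\hat A_n(s)/n - ps|]$ and then integrate the sub-Gaussian DKW tail from Proposition~\ref{lem:tBM-satisfy-1} to land at $\ups_n^2 \leq \kappa/\sqrt n$. The paper instead works pointwise in $s$: it applies Cauchy--Schwarz to get $\bex|\hat A_n(s)/n - ps| \leq \sqrt{\bex(\hat A_n(s)/n - ps)^2}$ and then uses the exact marginal law $\hat A_n(s)\sim\textnormal{Bin}(n,ps)$ to read off the variance $ps(1-ps)/n$, giving the cleaner explicit bound $\ups_n^2 \leq 1/(2\sqrt n)$. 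Both routes yield the same $O(n^{-1/2})$ rate for $\ups_n^2$ and hence the same $e^{-\lambda_3 x^2\sqrt n}$ tail; the paper's is marginally sharper and more self-contained (no need to pass through the maximal DKW inequality for this step), while yours has the advantage of recycling a bound you already have in hand and would generalize to settings where the marginal of $\Xi_n(s)$ is not a recognizable distribution.
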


\begin{proof}
The proof follows from the DKW-type inequality established for the Brownian motion in Proposition~\ref{prop:3}, the conditions for which are satisfied in Proposition~\ref{lem:tBM-satisfy-1}. Consequently we obtain there exists constants $C_3$, $K_3$ and $\la_3$ such that:
\beq\label{eq:3-a-}
\bp \lp \sup_{s \in [0,1]} \lln B_{\frac{\hat{A}_n(s)}{n}} - B_{ps} \rrn > C_3 \dfrac{\sqrt{\log n}}{n^{1/4}} + x \rp \leq 2 e^{-\frac{x^2}{\si_n^2}},
\eeq
where 
\beq\label{eq:3-a}
\si_n^2 = \sup_{s \in [0,1]} \bex\lp \lln \frac{\hat{A}_n(s)}{n}-ps \rrn \rp.
\eeq
In order to bound $\si_n^2$, we first apply Cauchy-Schwarz inequality to get:
\beq\label{eq:3-b}
\bex\lp \lln \dfrac{\hat{A}_n(s)}{n} - ps \rrn \rp \leq \sqrt{\bex{\lp \dfrac{\hat{A}_n(s)}{n} - ps \rp}^2}.
\eeq
Observe $\hat{A}_n(s)$ has the ${\rm Bin} (n, ps)$ distribution, which implies:
\beq\label{eq:3-c}
\bex{\lp \dfrac{\hat{A}_n(s)}{n} - ps \rp}^2 = \dfrac{\textnormal{Var}(\hat{A}_n(s))}{n^2}= \dfrac{ps(1-ps)}{n}.
\eeq
Combining \eqref{eq:3-b} and \eqref{eq:3-c} we obtain from \eqref{eq:3-a}:
\beq\label{eq:3-d}
\si_n^2 = \sup_{s \in [0,1]} \bex\lp \lln \dfrac{\hat{A}_n(s)}{n} - ps \rrn \rp \leq \sup_{s \in [0,1]} \sqrt{\dfrac{ps(1-ps)}{n}}= \dfrac{1}{2 \sqrt{n}}.
\eeq
Using inequality \eqref{eq:3-d} in \eqref{eq:3-a-} we now have our desired strong embedding result:
$$
\bp \lp \sup_{s \in [0,1]} \lln B_{\frac{\hat{A}_n(s)}{n}} - B_{ps} \rrn > C_3 \dfrac{\sqrt{\log n}}{n^{1/4}} + x \rp \leq 2 e^{-2 x^2 \sqrt{n}},
$$
which holds for all $n \geq 1$ and $x > 0$.
\end{proof}
We now extend our result in Proposition~\ref{prop:BM-dropouts} to generally distributed time epochs. This is the subject of the following corollary.
\begin{corollary}\label{cor:1}
	Let Assumption~\ref{assum:arriv-dropouts} holds. Let $B$ be a Brownian motion independent of $T_1, \ldots, T_n$ and $\zeta_1, \ldots, \zeta_n$ for each $n \geq 1$. Then for every $n \geq 1$ and $x > 0$, the same constants $C_3$, $K_3$ and $\la_3$ as in Proposition~\ref{prop:BM-dropouts} satisfy:
	$$
	\bp \lp \sup_{t \in [0,\infty)} \lln B_{\frac{A_n(t)}{n}} - B_{pG^{(n)}(t)}\rrn > C_3 \dfrac{\sqrt{\log n}}{n^{1/4}} +x \rp \leq K_3 e^{-\la_3 x^2 \sqrt{n}}.
	$$
\end{corollary}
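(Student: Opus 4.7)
The plan is to reduce the corollary to Proposition~\ref{prop:BM-dropouts} via the probability integral transform, mimicking the reduction carried out in Proposition~\ref{prop:2} for the empirical process.

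First, I would set $U_i := G^{(n)}(T_i)$ for $i=1,\ldots,n$. Since $G^{(n)}$ is Lipschitz continuous (hence continuous), the $U_i$'s are iid $U[0,1]$ random variables. Let $F_n$ denote their empirical distribution function and define the auxiliary arrival process $\hat{A}_n(s) := \sum_{i=1}^{nF_n(s)} \zeta_i$ as in Proposition~\ref{prop:BM-dropouts}. The aim is then to verify the almost sure identity $A_n(t) = \hat{A}_n(G^{(n)}(t))$ for all $t \in [0,\infty)$, which amounts to showing $G_n(t) = F_n(G^{(n)}(t))$ a.s. This is exactly the content of equations~\eqref{eq:2-f}--\eqref{eq:2-h} in the proof of Proposition~\ref{prop:2}: even though the generalized inverse satisfies $(G^{(n)})^{-1}(G^{(n)}(s)) \geq s$ in general, we have $\mathbf{1}_{\{T_i \leq (G^{(n)})^{-1}(G^{(n)}(s))\}} = \mathbf{1}_{\{T_i \leq s\}}$ a.s. because $G^{(n)}$ places no mass on its flat intervals.

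Second, because $G^{(n)}(t) \in [0,1]$ for every $t \in [0,\infty)$, I can bound the supremum over $t$ by a supremum over $s \in [0,1]$:
$$
\sup_{t \in [0,\infty)} \lln B_{\frac{A_n(t)}{n}} - B_{pG^{(n)}(t)} \rrn = \sup_{t \in [0,\infty)} \lln B_{\frac{\hat{A}_n(G^{(n)}(t))}{n}} - B_{pG^{(n)}(t)} \rrn \leq \sup_{s \in [0,1]} \lln B_{\frac{\hat{A}_n(s)}{n}} - B_{ps} \rrn \quad \text{a.s.}
$$
The right-hand side is precisely the quantity controlled by Proposition~\ref{prop:BM-dropouts}, so applying that proposition delivers the claimed bound with the same constants $C_3, K_3, \la_3$.

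The routine but most delicate step will be Step~1: carefully justifying $A_n(t) = \hat{A}_n(G^{(n)}(t))$ pathwise. This requires replicating the a.s.\ argument from Proposition~\ref{prop:2} and checking that the indicator identity continues to hold when composed with the Bernoulli summation, which is essentially automatic since the reindexing $nG_n(t) = nF_n(G^{(n)}(t))$ produces the same upper summation limit and the $\zeta_i$'s are independent of the arrival epochs. Beyond this, the argument is a direct chain of inequalities and does not require any new probabilistic estimate.
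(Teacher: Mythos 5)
Your proof is correct and takes essentially the same route as the paper: a probability-integral-transform reduction of the general arrival distribution to the $U[0,1]$ case, followed by an appeal to Proposition~\ref{prop:BM-dropouts}. The only cosmetic difference is the direction of the reparametrization — the paper establishes $\hat{A}_n(s)=A_n(G^{-1}(s))$ and bounds $\sup_{s\in[0,1]}$ from below by $\sup_{t\in[0,\infty)}$, whereas you establish $A_n(t)=\hat{A}_n(G(t))$ and bound $\sup_{t}$ from above by $\sup_{s\in[0,1]}$ — which is logically equivalent; note also that under Assumption~\ref{assum:arriv-dropouts} the arrival distribution is $G$ (the $G^{(n)}$ in the display is a slip carried over from the statement), and ordinary continuity of $G$ suffices for the transform — Lipschitz continuity is not needed here.
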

\begin{proof}
The proof of this result is similar to the reasonings we adopted in Proposition~\ref{prop:2}. Consequently, let us again consider the random variables $\{ G(T_i): i=1,\ldots,n \}$. Observe that the $G(T_i)$'s are independent and identically distributed as $U[0,1]$ random variables. Consider the corresponding distribution function $F_n$ given by:
$$
F_n(t) = \dfrac{1}{n} \sum_{i=1}^n \mathbf{1}_{\{ G(T_i) \leq t \}}.
$$ 
Let $\hat{A}_n(t) = \sum_{i=1}^{nF_n(t)} \zeta_i$.
In addition, recall the definition of $G^{-1}(t)$ in \eqref{eq:2-1}. Using \eqref{eq:2-f} we obtain:
\beq\label{eq:3-d-e}
\sup_{s \in [0,1]} \lln B_{ps} - B_{\frac{\hat{A}_n(s)}{n}} \rrn = \sup_{s \in [0,1]} \lln B_{ps} - B_{\frac{A_n({G}^{-1}(s))}{n}} \rrn.
\eeq
In a spirit similar to what is used to obtain \eqref{eq:2-j} we have from the analogue to \eqref{eq:2-h}:
\beq\label{eq:3-e}
\sup_{s \in [0,1]} \lln B_{ps} - B_{\frac{{A}_n({G}^{-1}(s))}{n}} \rrn \geq \sup_{s \in [0, \infty)} \lln B_{\frac{A_n(s)}{n}} - B_{pG(s)} \rrn~ \textnormal{a.s.}
\eeq
Our desired result now follows from Proposition~\ref{prop:BM-dropouts}. Observe Proposition~\ref{prop:BM-dropouts} guarantees existence of constants $C_3$, $K_3$ and $\la_3$ such that, for all $n \geq 1$ and $x > 0$ we have:
\beq\label{eq:3-f}
\bp \lp \sup_{s \in [0,1]} \lln B_{\frac{\hat{A}_n(s)}{n}} - B_{ps} \rrn > C_3 \dfrac{\sqrt{\log n}}{n^{1/4}} + x \rp \leq K_3 e^{-\la_3 x^2 \sqrt{n}}.
\eeq
We now complete our proof by looking at the complement probability in \eqref{eq:3-f} and using \eqref{eq:3-d-e} we have:
\begin{align}\label{eq:3-g}
1-K_3 e^{-\la_3 x^2 \sqrt{n}} &\leq \bp \lp \sup_{s \in [0,1]} \lln B_{\frac{\hat{A}_n(s)}{n}} - B_{ps} \rrn \leq C_3 \dfrac{\sqrt{\log n}}{n^{1/4}} + x \rp \nonumber \\
&= \bp \lp \sup_{s \in [0,1]} \lln B_{\frac{A_n({G}^{-1}(s))}{n}} - B_{ps} \rrn \leq C_3 \dfrac{\sqrt{\log n}}{n^{1/4}} + x \rp
\end{align}
Now using \eqref{eq:3-e} in \eqref{eq:3-g} we obtain:
\beq
1-K_3 e^{-\la_3 x^2 \sqrt{n}} \leq \bp \lp \sup_{s \in [0,\infty)} \lln B_{\frac{A_n(s)}{n}} - B_{pG(s)} \rrn \leq C_3 \dfrac{\sqrt{\log n}}{n^{1/4}} \rp,
\eeq
which yields our desired result.
\end{proof}
\begin{remark}\label{rem:cor-1}
Observe that the constants $C_3$, $K_3$ and $\la_3$ in Corollary~\ref{cor:1} do not depend on the particular distribution $G$.
\end{remark}
\begin{comment}
{ For $p=1$ we have the following corollary.
\begin{corollary}
$$
\bp \lc \sup_{t \in [0,\infty)} \lln B_{G_n^{(n)}(t)} - B_{G(t)}\rrn > C_3 \dfrac{\sqrt{\log n}}{n^{1/4}} +x \rc \leq K_3 e^{-\la_3 x^2 \sqrt{n}}.
$$
\end{corollary}
}
Remark~\ref{rem:Delta_i} provides the result corresponding to Corollary~\ref{cor:1} for the $\Delta_{(i)}/G/1$ model (without dropouts) by considering $p=1$. This is stated in the following corollary.
\begin{corollary}\label{cor:1-1}
	Let $T_1, \ldots, T_n$ be i.i.d. samples from a general distribution with distribution function $G$. Let $G_n$ be the empirical distribution function given by 
	$$
	G_n(t) = \dfrac{1}{n} \sum_{i=1}^n \mathbf{1}_{\{T_i \leq t\}}.
	$$
	Let $B$ be a Brownian motion independent of $T_1, \ldots, T_n$ for each $n \geq 1$. Then there exists constants $C_3$, $K_3$ and $\la_3$ such that for all $n \geq 1$ and $x > 0$ we have:
	$$
	\bp \lc \sup_{t \in [0,\infty)} \lln B_{G_n(t)} - B_{G(t)}\rrn > C_3 \dfrac{\sqrt{\log n}}{n^{1/4}} +x \rc \leq K_3 e^{-\la_3 x^2 \sqrt{n}}.
	$$
\end{corollary}
\end{comment}
Corollaries~\ref{cor:prop-2-cor} and \ref{cor:1} provide approximations in terms of $G^{(n)}$. In order to further simplify our approximation processes we require the following lemma.
{\begin{lemma}\label{lem:B_G^n-G}
Let Assumption~\ref{assum:arriv-dropouts-2} hold. Then for any $q>0$ and any sequence of Brownian motions $B^n$, there exist constants $C_4$, $K_4$ and $\la_4$ such that
$$
\bp \lp \sup_{t \in [0, \infty)} \lln B_{qG^{(n)}(t)}^n - B_{qG(t)}^n \rrn > C_4 \dfrac{\sqrt{\log n}}{n^{1/4}} + x \rp < K_4 e^{-\la_4 x^2 \sqrt{n}}.
$$
\end{lemma}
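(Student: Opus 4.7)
The plan is to reduce the claim to an application of Proposition~\ref{prop:tBM-nonstoch} with $\xi^{(n)}(t) = qG^{(n)}(t)$ and $\xi(t) = qG(t)$. Assumption~\ref{assum:arriv-dropouts-2} directly gives the uniform rate $\sup_{t\ge 0}|qG^{(n)}(t) - qG(t)| \leq q\, r_n(G) = O(1/\sqrt n)$, as well as Lipschitz regularity of both functions. The only structural mismatch is that Proposition~\ref{prop:tBM-nonstoch} is formulated on a compact interval $[0,T]$, whereas here $t$ ranges over $[0,\infty)$.

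To remedy this, I would exploit that $G$ is strictly increasing and perform the change of variable $u = G(t)$, using $G^{-1}$ as defined in \eqref{eq:2-1}. Writing $\tilde G^{(n)}(u) := G^{(n)}(G^{-1}(u))$, this gives the identity
$$
\sup_{t\in[0,\infty)}\lln B^n_{qG^{(n)}(t)} - B^n_{qG(t)}\rrn \;=\; \sup_{u\in[0,G(\infty))}\lln B^n_{q\tilde G^{(n)}(u)} - B^n_{qu}\rrn,
$$
where the right-hand index set lies in the compact interval $[0,1]$. Along this reparametrization, $|q\tilde G^{(n)}(u) - qu| = q|G^{(n)}(G^{-1}(u)) - G(G^{-1}(u))| \leq q\, r_n(G)$, so the centered Gaussian process $\hat f_u := B^n_{q\tilde G^{(n)}(u)} - B^n_{qu}$ has uniform variance of order $O(1/\sqrt n)$.

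Having recast the problem on a compact index set, I would run the Borell-TIS argument of Proposition~\ref{prop:tBM-nonstoch} verbatim. The key input is a bound on the canonical metric $\hat d(u_1,u_2)^2 = \bex[(\hat f_{u_1} - \hat f_{u_2})^2]$. A triangle-inequality detour through $u\mapsto qu$ (which is $q$-Lipschitz) yields $|q\tilde G^{(n)}(u_1) - q\tilde G^{(n)}(u_2)| \leq 2q\, r_n(G) + q|u_1-u_2|$, and then the same calculation as in the proof of Proposition~\ref{prop:tBM-nonstoch} gives $\hat d(u_1,u_2)^2 \leq C_0(r_n(G) + |u_1-u_2|)$ for a universal $C_0$. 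The Dudley metric-entropy integral over $[0, G(\infty)] \subseteq [0,1]$ then produces the bound $\bex[\sup_u \hat f_u] = O(\sqrt{\log n}/n^{1/4})$, and Borell-TIS with variance $O(1/\sqrt n)$ delivers the exponential tail $K_4\, e^{-\la_4 x^2\sqrt n}$ at the rate $C_4 \sqrt{\log n}/n^{1/4}$.

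The main subtlety is that Assumption~\ref{assum:arriv-dropouts-2} does not guarantee that $G^{-1}$ is Lipschitz, so one cannot simply invoke the Lipschitz hypothesis of Proposition~\ref{prop:tBM-nonstoch} for the composition $\tilde G^{(n)} = G^{(n)}\circ G^{-1}$. The triangle-inequality bound above sidesteps this, since we only need to control the canonical metric (not the Lipschitz constant of the reparametrized functions) in order to carry out the Dudley entropy argument. Once this observation is in hand, the rest of the proof is a direct transcription of Proposition~\ref{prop:tBM-nonstoch}.
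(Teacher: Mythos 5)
Your observation that Assumption~\ref{assum:arriv-dropouts-2} does not guarantee Lipschitz continuity of $G^{-1}$ is a perceptive one and exposes a genuine soft spot in the paper's argument. The paper sets $\xi^{(n)}(s)=qG^{(n)}(G^{-1}(s))$ and invokes Proposition~\ref{prop:tBM-nonstoch}, asserting ``Lipschitz continuity of $\xi^{(n)}$ follows from that of $G^{(n)}$ and $G^{-1}$''; but since $G$ is a strictly increasing CDF supported on $[0,\infty)$, its derivative tends to $0$ at infinity and $G^{-1}$ has unbounded slope near $1$, so $G^{-1}$ need not be Lipschitz. In short, the hypotheses of Proposition~\ref{prop:tBM-nonstoch} are not verified by the stated assumptions alone, and the paper's proof silently uses extra regularity of $G$.

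Unfortunately, your proposed triangle-inequality fix does not close the gap. The bound you derive, $\hat d(u_1,u_2)^2 \le C_0\bigl(r_n(G) + |u_1-u_2|\bigr)$ with $C_0 = 2q$, carries an additive term of order $r_n(G) = O(n^{-1/2})$ that does not vanish as $|u_1-u_2|\to 0$. Meanwhile the diameter bound (which comes from the \emph{other} metric inequality, not yours) gives $\hat D/2 \le \sqrt{qr_n(G)/2}$, so the Dudley integral $\int_0^{\hat D/2}\sqrt{\log N(\ep)}\,d\ep$ ranges entirely over $\ep^2 < C_0 r_n(G)$. In that regime, forcing $\hat d \le \ep$ from your bound would require $|u_1-u_2| \le \ep^2/C_0 - r_n(G) < 0$, which is vacuous; the bound gives no control whatsoever on the covering numbers $N(\ep)$ on the \emph{entire} interval of integration, so the ``verbatim Borell-TIS/Dudley argument'' you propose cannot proceed. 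A clean workaround that sidesteps the Lipschitz issue altogether is to notice that $\sup_u \lln B_{q\tilde G^{(n)}(u)} - B_{qu} \rrn$ is dominated by the modulus of continuity $\sup_{s,t\in[0,q],\,|s-t|\le q r_n(G)}|B_s - B_t|$, whose expectation is $O(\sqrt{r_n(G)\log(1/r_n(G))}) = O(n^{-1/4}\sqrt{\log n})$ with sub-Gaussian concentration at scale $\sqrt{r_n(G)}$, yielding exactly the stated tail $K_4 e^{-\la_4 x^2 \sqrt{n}}$; this requires no regularity of $G^{-1}$ beyond continuity. That route is, however, genuinely different from both the paper's proof and your own.
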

\begin{proof}
We apply Proposition~\ref{prop:tBM-nonstoch}. Take $T=1$, $\xi^{(n)}(s) = qG^{(n)}(G^{-1}(s))$, $\xi(s)=qs$ and observe that continuity of $G$ implies
\begin{multline*}
\sup_{s \in [0,1]} \lln \xi^{(n)}(s) - \xi(s) \rrn = \sup_{s \in [0,1]} q \lln G^{(n)}(G^{-1}(s)) - s \rrn = \sup_{s \in [0, \infty)} q\lln G^{(n)}(G^{-1}(G(s))) - G(s) \rrn.
\end{multline*}
Notice $G(G^{-1}(G(s)))=G(s)$ for all $s$. Thus we have:
$$
\sup_{s \in [0,1]} \lln \xi^{(n)}(s) - \xi(s) \rrn = \sup_{s \in [0, \infty)} \lln G^{(n)}(G^{-1}(G(s))) - G(G^{-1}(G(s))) \rrn \leq \sup_{s \in [0, \infty)} \lln G^{(n)}(s) - G(s) \rrn.
$$
From \eqref{eq:Gn-dkw} we now obtain that
$$
\sup_{s \in [0,1]} \lln \xi^{(n)}(s) - \xi(s) \rrn = O\lp \dfrac{1}{\sqrt{n}} \rp.
$$
The Lipschitz continuity of $\xi$ is obvious, while Lipschitz continuity of $\xi^{(n)}$ follows from that of $G^{(n)}$ and $G^{-1}$. In addition, from a similar property for $G^{(n)}$, the Lipschitz coefficient of $\xi^{(n)}$ grows at most polynomially in $n$. 
Thus we have constants $C_4$, $K_4$ and $\la_4$ such that:
\beq\label{eq:B-xi^n-xi}
\bp \lp \sup_{t \in [0,1]} \lln B_{qG^{(n)}(G^{-1}(s))} - B_{qs} \rrn > C_4 \dfrac{\sqrt{\log n}}{n^{1/4}} + x \rp < K_4 e^{-\la_4 x^2 \sqrt{n}}.
\eeq
Since $G$ is strictly increasing in $[0, \infty)$, we have $G^{-1}(G(s))=s$ for $s \in [0, \infty)$. Consequently we have:
$$
\sup_{s \in [0,1]} \lln B_{qG^{(n)}(G^{-1}(s))} - B_{qs} \rrn = \sup_{s \in [0, \infty)} \lln B_{qG^{(n)}(G^{-1}(G(s)))} - B_{qG(s)} \rrn = \sup_{s \in [0, \infty)} \lln B_{qG^{(n)}(s)} - B_{qG(s)} \rrn.
$$
This provides our desired result from \eqref{eq:B-xi^n-xi}.
\end{proof}
We can now extend Corollary~\ref{cor:1} courtesy Lemma~\ref{lem:B_G^n-G}.
\begin{corollary}
Suppose Assumption~\ref{assum:arriv-dropouts-2} holds. Let $B$ be a Brownian motion independent of $T_1, \ldots, T_n$ and $\zeta_1,\ldots,\zeta_n$ for each $n \geq 1$. Then there exist constants $C_5$, $K_5$ and $\la_5$ such that for all $n \geq 1$ and $x>0$ we have:
$$
\bp \lp \sup_{t \in [0, \infty)} \lln B_{\frac{A_n(t)}{n}} - B_{pG(t)} \rrn > C_5 \dfrac{\sqrt{\log n}}{n^{1/4}} + x \rp < K_5 e^{-\la_5 x^2 \sqrt{n}}.
$$
\end{corollary}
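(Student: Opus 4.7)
The plan is to decompose the target supremum via a triangle inequality and then apply the two preceding results. Specifically, I would insert $B_{pG^{(n)}(t)}$ as an intermediate quantity:
$$
\sup_{t \in [0, \infty)} \lln B_{\frac{A_n(t)}{n}} - B_{pG(t)} \rrn \leq \sup_{t \in [0, \infty)} \lln B_{\frac{A_n(t)}{n}} - B_{pG^{(n)}(t)} \rrn + \sup_{t \in [0, \infty)} \lln B_{pG^{(n)}(t)} - B_{pG(t)} \rrn.
$$
This choice is natural because $G^{(n)}$ is precisely the distribution from which the $T_i$ are sampled under Assumption~\ref{assum:arriv-dropouts-2}, so the first summand is exactly of the form handled by Corollary~\ref{cor:1}, while the second summand measures only the deterministic distance between $G^{(n)}$ and $G$ (as propagated through $B$) and is precisely the quantity controlled by Lemma~\ref{lem:B_G^n-G}.

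For the first supremum, I would invoke Corollary~\ref{cor:1} with the role of the arrival distribution played by $G^{(n)}$ itself. By Remark~\ref{rem:cor-1}, the constants $C_3$, $K_3$, $\la_3$ from that corollary do not depend on the specific distribution involved, so the substitution $G \mapsto G^{(n)}$ does not introduce hidden $n$-dependence into the constants. For the second supremum, Lemma~\ref{lem:B_G^n-G} applied with $q = p$ yields a bound of the identical form, with constants $C_4$, $K_4$, $\la_4$.

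Finally, I would combine these two estimates via a union bound, splitting the slack $x$ as $x/2 + x/2$, producing
$$
\bp \lp \sup_{t \in [0, \infty)} \lln B_{\frac{A_n(t)}{n}} - B_{pG(t)} \rrn > (C_3 + C_4) \dfrac{\sqrt{\log n}}{n^{1/4}} + x \rp \leq (K_3 + K_4) e^{-\min(\la_3, \la_4) x^2 \sqrt{n}/4},
$$
from which one reads off $C_5 = C_3 + C_4$, $K_5 = K_3 + K_4$, and $\la_5 = \min(\la_3, \la_4)/4$. I do not anticipate any serious obstacle: the argument is a routine concatenation of the two established bounds. The only point requiring verification is the $G$-independence of the constants in Corollary~\ref{cor:1} (so that substituting the $n$-dependent $G^{(n)}$ is legitimate), which is flagged explicitly in Remark~\ref{rem:cor-1} and traces back through Proposition~\ref{prop:BM-dropouts} to the universal constants in the DKW-type estimate of Proposition~\ref{prop:3}.
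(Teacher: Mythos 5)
Your proposal is correct and follows essentially the same route as the paper: insert $B_{pG^{(n)}(t)}$ via the triangle inequality, apply Corollary~\ref{cor:1} (invoking Remark~\ref{rem:cor-1} to justify that the constants are $G$-independent so the substitution $G \mapsto G^{(n)}$ is legitimate), apply Lemma~\ref{lem:B_G^n-G} with $q=p$, and combine by a union bound. The paper's proof is identical in structure, merely less explicit about splitting the slack $x$.
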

\begin{proof}
From Corollary~\ref{cor:1} and observing Remark~\ref{rem:cor-1} we have
$$
\bp \lp \sup_{t \in [0, \infty)} \lln B_{\frac{A_n(t)}{n}} - B_{p G^{(n)}(t)} \rrn > C_3 \dfrac{\sqrt{\log n}}{n^{1/4}} + x \rp < K_3 e^{-\la_3 x^2 \sqrt{n}}.
$$
From Lemma~\ref{lem:B_G^n-G} with $q=p$ we have constants $C_4'$, $K_4'$ and $\la_4'$ such that
$$
\bp \lp \sup_{t \in 0, \infty)} \lln B_{pG^{(n)}(t)} - B_{pG(t)} \rrn > C_4' \dfrac{\sqrt{\log n}}{n^{1/4}} + x \rp < K_4' e^{-\la_4' x^2 \sqrt{n}}.
$$
Combining the above we have our desired result.
\end{proof}
Corollary~\ref{cor:prop-2-cor} can be extended using the following result which again is a consequence of Lemma~\ref{lem:B_G^n-G}.
\begin{corollary}\label{cor:B-br_n(G)^n-G}
Let Assumption~\ref{assum:arriv-dropouts-2} hold. Then we have:
$$
\bp \lp \sup_{t \in [0, \infty)} \lln \tilde{B}_{G^{(n)}(t)}^{\textnormal{br},n} - \tilde{B}_{G(t)}^{\textnormal{br},n} \rrn > C_6 \dfrac{\sqrt{\log n}}{n^{1/4}} + x \rp < K_6 e^{-\la_6 x^2\sqrt{n}}.
$$
\end{corollary}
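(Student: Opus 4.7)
The plan is to exploit the classical distributional identity $\{\tilde{B}^{\textnormal{br},n}_s\}_{s \in [0,1]} \stackrel{d}{=} \{B^n_s - s B^n_1\}_{s \in [0,1]}$, where $B^n$ is a standard Brownian motion. Since the event whose probability we wish to bound is a measurable functional of the Brownian bridge sampled along the (deterministic) paths $G^{(n)}$ and $G$, we may without loss of generality work with this representation. Substituting $s = G^{(n)}(t)$ and $s=G(t)$, both of which lie in $[0,1]$, and subtracting yields the decomposition
\[
\tilde{B}^{\textnormal{br},n}_{G^{(n)}(t)} - \tilde{B}^{\textnormal{br},n}_{G(t)} = \bigl(B^n_{G^{(n)}(t)} - B^n_{G(t)}\bigr) + \bigl(G(t) - G^{(n)}(t)\bigr) B^n_1,
\]
so by the triangle inequality the problem reduces to bounding the two terms separately, each with a budget of $x/2$.

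For the first term, I would invoke Lemma~\ref{lem:B_G^n-G} with $q=1$ applied to the Brownian motion $B^n$, which produces constants $C'$, $K'$, $\la'$ with
\[
\bp \lp \sup_{t \in [0,\infty)} \lln B^n_{G^{(n)}(t)} - B^n_{G(t)} \rrn > C' \dfrac{\sqrt{\log n}}{n^{1/4}} + \dfrac{x}{2} \rp < K' e^{-\la' x^2 \sqrt{n}/4}.
\]
For the second term, Assumption~\ref{assum:arriv-dropouts-2} gives $r_n(G) \leq K_0/\sqrt{n}$ for some constant $K_0$, hence $\sup_{t} |(G(t) - G^{(n)}(t)) B^n_1| \leq (K_0/\sqrt{n}) |B^n_1|$. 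Since $B^n_1$ is standard normal, the Gaussian tail bound yields
\[
\bp \lp \dfrac{K_0}{\sqrt{n}}|B^n_1| > \dfrac{x}{2} \rp \leq 2 \exp\lp -\dfrac{n x^2}{8 K_0^2} \rp,
\]
which decays strictly faster than $e^{-\la_6 x^2 \sqrt{n}}$ once $n$ is sufficiently large.

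A union bound combines the two estimates. Since $1/\sqrt{n} = o(\sqrt{\log n}/n^{1/4})$, the deterministic $K_0/\sqrt{n}$ slack can be absorbed into the leading $\sqrt{\log n}/n^{1/4}$ term, yielding constants $C_6$, $K_6$, $\la_6$ as claimed. I do not anticipate a serious obstacle: the decomposition is elementary, the main probabilistic input is Lemma~\ref{lem:B_G^n-G}, and the only care needed is to confirm that the second term's $\exp(-x^2 n)$ decay and its $1/\sqrt{n}$ scale are both negligible against the target rates $\exp(-\la_6 x^2 \sqrt n)$ and $\sqrt{\log n}/n^{1/4}$ respectively.
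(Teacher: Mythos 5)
Your proposal is correct and follows essentially the same route as the paper: represent $\tilde{B}^{\textnormal{br},n}_s$ as $B^n_s - sB^n_1$, split into the Brownian-motion increment term handled by Lemma~\ref{lem:B_G^n-G} with $q=1$ and the term $|G^{(n)}(t)-G(t)|\,|B^n_1|$ handled by $r_n(G)=O(1/\sqrt{n})$ and the Gaussian tail, then take a union bound. The only difference is bookkeeping in how the threshold is split between the two terms, which does not affect the argument.
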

\begin{proof}
Using the fact that the Brownian bridge $\tilde{B}^{\textnormal{br},n}$ can be represented as:
$$
\tilde{B}^{\textnormal{br},n}_t \stackrel{D}{=} B_t^n - t B_1^n,
$$
for a Brownian motion $B^{n}$ we have that
\begin{align}\label{eq:B-br_n(G)^n-G-1}
\bp \left(\sup_{t \in [0, \infty)}\lln \tilde{B}_{G^{(n)}(t)}^{\textnormal{br},n} - \tilde{B}_{G(t)}^{\textnormal{br},n}  \rrn > 2 z\right) \leq &\bp \left(\sup_{t \in [0, \infty)}\lln B_{G^{(n)}(t)}^n - B_{G(t)}^n \rrn > z \right) \nonumber \\ &+ \bp\left(\sup_{t \in [0, \infty)}\lln G^{(n)(t)} - G(t) \rrn \lln B_1^n \rrn > z \right)~\forall z > 0.
\end{align}
From Lemma~\ref{lem:B_G^n-G} with $q=1$, there exist constants $C_4''$, $K_4''$ and $\la_4''$ such that for $z = (C_4'' \frac{\sqrt{\log n}}{n^{1/4}}+ x)$ we have:
\beq\label{eq:B-br_n(G)^n-G-2}
\bp \lp \sup_{t \in [0, \infty)} \lln B_{G^{(n)}(t)}^n - B_{G^(t)}^n \rrn > z \rp < K_4'' e^{-\la_4'' x^2 \sqrt{n}}.
\eeq
Recall notation $r_n(G)$ introduced in \eqref{eq:Gn-dkw}. Since $z \geq x$ we have
$$
\bp \lp \sup_{t \in [0, \infty)} \lln G^{(n)}(t) - G(t) \rrn |B_1^n| > z \rp = \bp \lp |B_1^n| > \dfrac{z}{r_n(G)} \rp \leq \bp \lp |B_1^n| > \dfrac{x}{r_n(G)} \rp.
$$
Since $r_n(G) = O(\frac{1}{\sqrt{n}})$ and $\bp (|B_1^n| > u) \leq 2 e^{-u^2/2}$, we now have:
\beq\label{eq:B-br_n(G)^n-G-3}
\bp \lp \sup_{t \in [0, \infty)} \lln G^{(n)}(t) - G(t) \rrn |B_1^n|  > z \rp \leq 2 e^{- \frac{x^2}{2 {r_n(G)}^2}} \leq 2 e^{-\la_4''' x^2 n},
\eeq
for some constant $\la_4'''$. Using \eqref{eq:B-br_n(G)^n-G-2} and \eqref{eq:B-br_n(G)^n-G-3} in \eqref{eq:B-br_n(G)^n-G-1} we obtain that there exist constants $C_6$, $K_6$ and $\la_6$ such that:
$$
\bp \lp \sup_{t \in [0, \infty)} \lln \tilde{B}_{G^{(n)}(t)}^{\textnormal{br},n} - \tilde{B}_{G(t)}^{\textnormal{br},n} \rrn > C_6 \dfrac{\sqrt{\log n}}{n^{1/4}} + x \rp < K_6 e^{-\la_6 x^2\sqrt{n}}.
$$
\end{proof}
}

We now arrive at our main result for this section, namely a strong embedding for the arrival process $A_n$. 
\begin{proposition}\label{prop:dropout-strong}
Let Assumption~\ref{assum:arriv-dropouts} or \ref{assum:arriv-dropouts-2} hold. Then there exists a Brownian motion $\hat{B}$, a Brownian bridge $B^{\textnormal{br},n}$ such that if $\hat{H}_n$ be defined as:
$$
\hat{H}_n(t) = 
\begin{cases}
\sqrt{n} p G(t) + p B_{G(t)}^{\textnormal{br},n} + \sqrt{p(1-p)} \hat{B}_{G(t)}, &\text{under Assum.~\ref{assum:arriv-dropouts}},\\
\sqrt{n} p \lp G(t) + r_n(G) \rp + p B_{G(t)}^{\textnormal{br},n} + \sqrt{p(1-p)} \hat{B}_{G(t)}, &\text{under Assum.~\ref{assum:arriv-dropouts-2}},
\end{cases}
$$
then there exists a version of $T_1, \ldots, T_n$, a version of $\zeta_1, \ldots, \zeta_n$, along with constants $C_7$, $K_7$ and $\la_7$ such that
$$
\bp \lp \sup_{t \in [0, \infty)} \lln \dfrac{A_n(t)}{\sqrt{n}} - \hat{H}_n(t) \rrn > C_7 \dfrac{\sqrt{\log n}}{n^{1/4}} + x \rp < 
 K_7 e^{-\la_7 x^2 \sqrt{n}}. 
$$
\end{proposition}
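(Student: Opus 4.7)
The plan is to start from the algebraic decomposition
$$\frac{A_n(t)}{\sqrt{n}} = p\sqrt{n}\,G_n(t) + \frac{1}{\sqrt{n}}\sum_{i=1}^{nG_n(t)}(\zeta_i - p),$$
and approximate each summand by means of the strong embeddings and time-change controls already in place. Note that the indicators $\zeta_i - p$ have zero mean, variance $p(1-p)$ and (being bounded) a moment generating function everywhere, while $nG_n(t)$ takes values in $\{0,1,\ldots,n\}$ and hence is an admissible choice of $J_n$ for Proposition~\ref{prop:1}.

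First, apply Proposition~\ref{prop:1} with $X_i = \zeta_i$, $\mu = p$, $\sigma = \sqrt{p(1-p)}$ and $J_n(t) = nG_n(t)$ to obtain a Brownian motion $\hat{B}$ and a version of the $\zeta_i$'s for which
$$\sup_{t}\left|\frac{1}{\sqrt{n}}\sum_{i=1}^{nG_n(t)}(\zeta_i - p) - \sqrt{p(1-p)}\,\hat{B}_{G_n(t)}\right| \leq C\,\frac{\log n}{\sqrt{n}} + x$$
with probability at least $1 - K e^{-\lambda x\sqrt{n}}$. Second, under Assumption~\ref{assum:arriv-dropouts}, Proposition~\ref{prop:2} yields a Brownian bridge $B^{\textnormal{br},n}$ and a version of the $T_i$ such that $\sup_t|\sqrt{n}(G_n(t)-G(t)) - B^{\textnormal{br},n}_{G(t)}| \leq C\log n/\sqrt{n} + x$ with the same exponential tail, so that $p\sqrt{n}\,G_n(t)$ is approximated by $p\sqrt{n}\,G(t) + p\,B^{\textnormal{br},n}_{G(t)}$ to the same accuracy. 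Combining these two steps already yields the formula in the statement except that the Brownian motion is evaluated at $G_n(t)$ rather than at $G(t)$.

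Third, I close the gap by invoking Proposition~\ref{prop:3} to replace $\hat{B}_{G_n(t)}$ by $\hat{B}_{G(t)}$. Passing through the uniformization $G(T_i) \sim U[0,1]$ as in Proposition~\ref{prop:2}, this reduces to controlling $\sup_{s\in[0,1]}|\hat{B}_{F_n(s)} - \hat{B}_s|$ where $F_n$ is the empirical distribution of the $G(T_i)$'s; Assumption~\ref{assum:tBM-1} holds trivially with $\xi_n(s) = s$, Assumption~\ref{assum:tBM-2} holds since $F_n$ has at most $n$ jumps and is $[0,1]$-valued, and Assumption~\ref{assum:tBM-3} is the classical DKW inequality. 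Proposition~\ref{prop:3} therefore delivers a bound of order $\sqrt{\log n}/n^{1/4}$ with tail $e^{-\lambda x^2 \sqrt{n}}$. A single union bound over the three error events produces the claim in the case of Assumption~\ref{assum:arriv-dropouts}. Under Assumption~\ref{assum:arriv-dropouts-2}, I make the following substitutions: Corollary~\ref{cor:prop-2-cor} replaces Proposition~\ref{prop:2}, giving an approximation in terms of $G^{(n)}$ rather than $G$; the Brownian bridge argument is corrected from $G^{(n)}$ to $G$ via Corollary~\ref{cor:B-br_n(G)^n-G} and the shift in $G^{(n)}(t) = G(t) + O(r_n(G))$ is absorbed into the deterministic $\sqrt{n}\,p\,r_n(G)$ term in $\hat{H}_n$; analogously Lemma~\ref{lem:B_G^n-G} (applied with the Brownian motion $\hat B$ and $q=p$) handles the $G^{(n)}\to G$ transition for the random time change of $\hat B$.

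The main obstacle and source of the final rate is step three: the KMT-style embeddings in steps one and two each contribute only $O(\log n/\sqrt{n})$, but replacing $G_n$ by $G$ inside $\hat B$ is bottlenecked by the Hölder-$\tfrac12$ modulus of Brownian motion together with the $O(1/\sqrt{n})$ size of $G_n - G$, giving the dominant $O(\sqrt{\log n}/n^{1/4})$ error. Some bookkeeping is also needed in the Assumption~\ref{assum:arriv-dropouts-2} case to check that the $r_n(G)$ offset is transferred cleanly through each of the substitutions and that the Gaussian tails from Proposition~\ref{prop:3}, Lemma~\ref{lem:B_G^n-G} and Corollary~\ref{cor:B-br_n(G)^n-G} dominate the $e^{-\lambda x \sqrt{n}}$ tails from Proposition~\ref{prop:1} for the range of $x$ of interest, so that the final $K_7 e^{-\lambda_7 x^2\sqrt{n}}$ bound is obtained after adjusting constants.
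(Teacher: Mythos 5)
Your proposal is correct and follows essentially the same route as the paper: the same decomposition of $A_n(t)/\sqrt{n}$ into the KMT random-walk embedding for the Bernoulli partial sums (Proposition~\ref{prop:1}), the KMT empirical-process embedding (Proposition~\ref{prop:2}, resp.\ Corollary~\ref{cor:prop-2-cor}), and the replacement of $\hat{B}_{G_n(t)}$ by $\hat{B}_{G(t)}$ via the DKW-style time-change control of Proposition~\ref{prop:3} (which is exactly what the paper invokes through Corollary~\ref{cor:1}), with Lemma~\ref{lem:B_G^n-G}, Corollary~\ref{cor:B-br_n(G)^n-G} and the deterministic $\sqrt{n}\,p\,r_n(G)$ shift handling the $G^{(n)}\to G$ corrections under Assumption~\ref{assum:arriv-dropouts-2}. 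The only nitpick is that the transition $\hat{B}_{G^{(n)}(t)}\to\hat{B}_{G(t)}$ uses Lemma~\ref{lem:B_G^n-G} with $q=1$ rather than $q=p$, an immaterial point since the lemma holds for every $q>0$.
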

\begin{proof}

\noindent
\emph{Step 1: Assumption~\ref{assum:arriv-dropouts}}: From Proposition~\ref{prop:1}, there exists a Brownian motion $\hat{B}$ along with constants $\hat{C}_1$, $\hat{K}_1$ and $\hat{\la}_1$ such that for all $n \geq 1$ and $x > 0$ we have:
\beq\label{eq:arrival-a}
\bp \lp \sup_{t \in [0, \infty)} \lln \dfrac{1}{\sqrt{n}} \sum_{i=1}^{nG_n(t)} \dfrac{(\zeta_i - p)}{\sqrt{p(1-p)}} - \hat{B}_{G_n(t)}  \rrn > \hat{C}_1 \dfrac{\log n}{\sqrt{n}} + x \rp < \hat{K}_1 e^{-\hat{\la}_1 x \sqrt{n}}.
\eeq
From Proposition~\ref{prop:2}, there exists a Brownian bridge $B^{\textnormal{br},n}$ along with constants $\hat{C}_2$, $\hat{K}_2$ and $\hat{\la}_2$ such that for all $n \geq 1$ and $x > 0$ we have:
\beq\label{eq:arrival-b}
\bp \lp \sup_{t \in [0, \infty)} \lln \sqrt{n} \lp G_n(t) - G(t) \rp - B_{G(t)}^{\textnormal{br},n} \rrn > \hat{C}_2 \dfrac{\log n}{\sqrt{n}} + x \rp <\hat{K}_2 e^{-\hat{\la}_2 x \sqrt{n}}.
\eeq
Observe that the arrival process $A_n$ given by \eqref{eq:arriv-dropouts} may be decomposed as follows:\beq\label{eq:arriv-decom}
\dfrac{A_n(t)}{\sqrt{n}} = \sqrt{p(1-p)} A_{1,n}(t) + p A_{2,n}(t) + \sqrt{p(1-p)} A_{3,n}(t)  + \hat{H}_n(t),
\eeq
where 
\begin{align*}
A_{1,n}(t) &= \dfrac{1}{\sqrt{n}} \sum_{i=1}^{nG_n(t)} \dfrac{(\zeta_i - p)}{\sqrt{p(1-p)}} - \hat{B}_{G_n(t)}\\
A_{2,n}(t) &= \sqrt{n} \lp G_n(t) - G(t) \rp - B_{G(t)}^{\textnormal{br},n},
\end{align*}
and 
$$
A_{3,n}(t) = \hat{B}_{G_n(t)} - \hat{B}_{G(t)}.
$$
From Corollary~\ref{cor:1}, there exist constants $\hat{C}_3$, $\hat{K}_3$ and $\hat{\la}_3$ such that for all $n \geq 1$ and $x > 0$:
\beq\label{eq:arrival-c}
\bp \lp \sup_{t \in [0, \infty)} \lln A_{3,n}(t) \rrn > \hat{C}_3 \dfrac{\sqrt{\log n}}{n^{1/4}} + x \rp < \hat{K}_3 e^{-\hat{\la}_3 x^2 \sqrt{n}}.
\eeq
Using the bounds \eqref{eq:arrival-a}, \eqref{eq:arrival-b} and \eqref{eq:arrival-c} in the decomposition~\eqref{eq:arriv-decom}, we now obtain existence of constants $C_7$, $K_7$ and $\la_7$ such that for all $n \geq 1$ and $x > 0$:
$$
\bp \lp \sup_{t \in [0, \infty)} \lln \dfrac{A_n(t)}{\sqrt{n}} - \hat{H}_n(t) \rrn > C_7 \dfrac{\sqrt{\log n}}{n^{1/4}} + x \rp < K_7 e^{-\la_7 x^2 \sqrt{n}}.
$$

\noindent
\emph{Step 2: Assumption~\ref{assum:arriv-dropouts-2}}: From Proposition~\ref{prop:1}, there exists a Brownian motion $\hat{B}$ along with constants $\hat{C}_1$, $\hat{K}_1$ and $\hat{\la}_1$ such that for all $n \geq 1$ and $x > 0$ we have:
\beq\label{eq:arrival-2-a}
\bp \lp \sup_{t \in [0, \infty)} \lln \dfrac{1}{\sqrt{n}} \sum_{i=1}^{nG_n^{(n)}(t)} \dfrac{(\zeta_i - p)}{\sqrt{p(1-p)}} - \hat{B}_{G_n^{(n)}(t)}  \rrn > \hat{C}_1 \dfrac{\log n}{\sqrt{n}} + x \rp < \hat{K}_1 e^{-\hat{\la}_1 x \sqrt{n}}.
\eeq
From Corollary~\ref{cor:prop-2-cor}, there exists a Brownian bridge $B^{\textnormal{br},n}$ along with constants $\hat{C}_2$, $\hat{K}_2$ and $\hat{\la}_2$ such that for all $n \geq 1$ and $x > 0$ we have:
\beq\label{eq:arrival-2-b}
\bp \lp \sup_{t \in [0, \infty)} \lln \sqrt{n} \lp G_n^{(n)}(t) - G^{(n)}(t) \rp - B_{G^{(n)}(t)}^{\textnormal{br},n} \rrn > \hat{C}_2 \dfrac{\log n}{\sqrt{n}} + x \rp <\hat{K}_2 e^{-\hat{\la}_2 x \sqrt{n}}.
\eeq
Observe that the arrival process $A_n$ given by \eqref{eq:arriv-dropouts} may be decomposed as follows:
\begin{multline}\label{eq:arriv-decom-2}
\dfrac{A_n(t)}{\sqrt{n}} = \sqrt{p(1-p)} A_{1,n}(t) + p A_{2,n}(t) + \sqrt{p(1-p)} A_{3,n}(t) + p A_{4,n}(t) + pA_{5,n}(t)\\ + \lp \hat{H}_n(t)-\sqrt{n}pr_n(G) \rp,
\end{multline}
where 
\begin{align*}
A_{1,n}(t) &= \dfrac{1}{\sqrt{n}} \sum_{i=1}^{nG_n^{(n)}(t)} \dfrac{(\zeta_i - p)}{\sqrt{p(1-p)}} - \hat{B}_{G_n^{(n)}(t)},\\
A_{2,n}(t) &= \sqrt{n} \lp G_n^{(n)}(t) - G^{(n)}(t) \rp - B_{G^{(n)}(t)}^{\textnormal{br},n},\\
A_{3,n}(t) &= \lp \hat{B}_{G_n^{(n)}(t)} - \hat{B}_{G(t)} \rp,\\
A_{4,n}(t) &= B_{G^{(n)}(t)}^{\textnormal{br},n} - B_{G(t)}^{\textnormal{br},n},
\end{align*}
and 
$$
A_{5,n}(t) = \sqrt{n} \lp G^{(n)}(t) - G(t) \rp.
$$
From Corollary~\ref{cor:1} and Lemma~\ref{lem:B_G^n-G}, there exist constants $\hat{C}_3$, $\hat{K}_3$ and $\hat{\la}_3$ such that for all $n \geq 1$ and $x > 0$:
\beq\label{eq:arrival-2-c}
\bp \lp \sup_{t \in [0, \infty)} \lln A_{3,n}(t) \rrn > \hat{C}_3 \dfrac{\sqrt{\log n}}{n^{1/4}} + x \rp < \hat{K}_3 e^{-\hat{\la}_3 x^2 \sqrt{n}}.
\eeq
Next from Corollary~\ref{cor:B-br_n(G)^n-G}, there exist constants $\hat{C}_4$, $\hat{K}_4$ and $\hat{\la}_4$ such that for all $n \geq 1$ and $x > 0$ we have:
\beq\label{eq:arrival-2-d}
\bp \lp \sup_{t \in [0, \infty)} \lln A_{4,n}(t) \rrn > \hat{C}_4 \dfrac{\sqrt{\log n}}{n^{1/4}} + x  \rp < \hat{K}_4 e^{-\hat{\la}_4 x^2 \sqrt{n} \}}.
\eeq
Ultimately note that according to Assumption~\ref{assum:arriv-dropouts-2} we have: 
\beq\label{eq:arrival-2-e}
\sup_{t \in [0, \infty)} \lln A_{5,n}(t) \rrn  = \sup_{t \in [0, \infty)} \sqrt{n} \lp G^{(n)}(t) - G(t) \rp = \sqrt{n} r_n(G) < \infty
\eeq
Using the bounds \eqref{eq:arrival-2-a}, \eqref{eq:arrival-2-b}, \eqref{eq:arrival-2-c}, \eqref{eq:arrival-2-d} and \eqref{eq:arrival-2-e} in the decomposition \eqref{eq:arriv-decom-2}, we now obtain existence of constants $C_7$, $K_7$ and $\la_7$ such that for all $n \geq 1$ and $x > 0$:
$$
\bp \lp \sup_{t \in [0, \infty)} \lln \dfrac{A_n(t)}{\sqrt{n}} - \hat{H}_n(t) \rrn > C_7 \dfrac{\sqrt{\log n}}{n^{1/4}} + x \rp\\ < K_7 e^{-\la_7 x^2 \sqrt{n}}.
$$
\end{proof}

\section{A Strong Embedding for the Workload Process}\label{sec:w}
In this section we derive a strong embedding for the workload process, as well as the total remaining workload.
\begin{comment}
\begin{corollary}\label{cor:total-incoming-work-Delta_i}
Let Assumptions~\ref{assum:arriv-dropouts} and \ref{assum:1} holds with $p=1$. Then for every $n \geq 1$, there exists a standard Brownian motion $B$ {and a version of $V_1, \ldots, V_n$}, along with constants $C_1$, $K_1$ and $\la_1$ such that for all $x>0$ we have:
$$
\bp \lc \sup_{t \in [0,\infty)}  \lln \dfrac{1}{\sqrt{n}} \sum_{i=1}^{nG_n(t)} \lp V_i - \mu \rp - \si B_{G_n(t)} \rrn > C_1 \dfrac{\log n}{\sqrt{n}} + x\rc < K_1 e^{-\la_1 x \sqrt{n}}.
$$
\end{corollary}
\end{comment}

\begin{proposition}\label{prop:workload}
Let Assumptions~\ref{assum:arriv-dropouts} or \ref{assum:arriv-dropouts-2} and \ref{assum:1} hold. Then there exist Brownian motions $B$ and $\hat{B}$ and a Brownian bridge $B^{\textnormal{br}}$ such that if $\hat{R}_n$ be defined as:
$$
\hat{R}_n(t) = \si B_{p G(t)} + \mu \hat{H}_n(t),
$$
where $\hat{H}_n$ has been defined in Proposition~\ref{prop:dropout-strong},
then there exists a version of $T_1, \ldots, T_n$, a version of $V_1,\ldots, V_n$, a version of $\zeta_i, \ldots, \zeta_n$ along with constants $C_8$, $K_8$ and $\la_8$ such that for all $n \geq 1$ and $x > 0$:
$$
\bp \lp \sup_{t \in [0, \infty)} \lln \dfrac{W_n(t)}{\sqrt{n}} - \hat{R}_n(t) \rrn > C_8 \dfrac{\sqrt{\log n}}{n^{1/4}} + x \rp \leq K_8 e^{-\la_8 x^2 \sqrt{n}}.
$$
\end{proposition}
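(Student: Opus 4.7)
The plan is to decompose the workload into a centered random-walk component (whose fluctuations produce the $\sigma B_{pG(t)}$ term) and a mean component (whose fluctuations are already controlled by Proposition~\ref{prop:dropout-strong}). Concretely, write
\[
\frac{W_n(t)}{\sqrt{n}} = \frac{1}{\sqrt{n}}\sum_{i=1}^{A_n(t)} (V_i - \mu) + \mu\,\frac{A_n(t)}{\sqrt{n}},
\]
so that
\[
\frac{W_n(t)}{\sqrt{n}} - \hat{R}_n(t) = \left(\frac{1}{\sqrt{n}}\sum_{i=1}^{A_n(t)}(V_i-\mu) - \sigma B_{pG(t)}\right) + \mu\left(\frac{A_n(t)}{\sqrt{n}} - \hat{H}_n(t)\right).
\]
The second bracket is handled directly by Proposition~\ref{prop:dropout-strong}, which already contributes an error of order $n^{-1/4}\sqrt{\log n}$ with the desired exponential tail. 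The first bracket will be split further and controlled by Proposition~\ref{prop:1} together with Corollary~\ref{cor:1} (under Assumption~\ref{assum:arriv-dropouts}) or its extension via Lemma~\ref{lem:B_G^n-G} (under Assumption~\ref{assum:arriv-dropouts-2}).

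For the first bracket, I would first apply Proposition~\ref{prop:1} with the process $J_n(t) := A_n(t)$, which takes values in $\{0,1,\dots,n\}$. This yields a Brownian motion $B$, a version of $V_1,\dots,V_n$, and constants $C_1,K_1,\lambda_1$ such that
\[
\sup_{t \in [0,\infty)}\left|\frac{1}{\sqrt{n}}\sum_{i=1}^{A_n(t)}(V_i-\mu) - \sigma B_{A_n(t)/n}\right| \le C_1\frac{\log n}{\sqrt{n}} + x
\]
fails with probability at most $K_1 e^{-\lambda_1 x\sqrt{n}}$. Next, I would use Corollary~\ref{cor:1} (respectively its analog obtained from Lemma~\ref{lem:B_G^n-G}) to replace the time index $A_n(t)/n$ by $pG(t)$, with error $C_3 n^{-1/4}\sqrt{\log n}+x$ and sub-Gaussian-in-$\sqrt{n}$ tail. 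A triangle inequality then gives
\[
\sup_{t}\left|\frac{1}{\sqrt{n}}\sum_{i=1}^{A_n(t)}(V_i-\mu) - \sigma B_{pG(t)}\right| \lesssim \frac{\sqrt{\log n}}{n^{1/4}} + x,
\]
with the appropriate exponential tail after a union bound.

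The final step is to combine the two bracketed error bounds via another triangle inequality and union bound. Since Proposition~\ref{prop:dropout-strong} contributes a bound of the form $C_7 n^{-1/4}\sqrt{\log n}+x$ with tail $K_7 e^{-\lambda_7 x^2\sqrt{n}}$, and the first bracket contributes a bound of the same form (the $x\sqrt{n}$ tail from Proposition~\ref{prop:1} is dominated by the $x^2\sqrt{n}$ tails once we absorb constants by taking $x$ small or splitting the budget), we can choose new constants $C_8,K_8,\lambda_8$ so that
\[
\mathbf{P}\!\left(\sup_{t\in[0,\infty)}\left|\frac{W_n(t)}{\sqrt{n}} - \hat{R}_n(t)\right| > C_8\frac{\sqrt{\log n}}{n^{1/4}} + x\right) \le K_8 e^{-\lambda_8 x^2 \sqrt{n}}.
\]
The main technical nuisance will be the usual bookkeeping needed so that all the ingredient processes ($\hat{B}$, $B^{\textnormal{br},n}$, $B$, and the three independent iid sequences $T_i$, $\zeta_i$, $V_i$) live on a common probability space; this is standard and follows by enlarging the probability space at each coupling step (independence of $V_i$ from $(T_i,\zeta_i)$ is used to ensure that applying Proposition~\ref{prop:1} conditionally on $A_n$ is valid). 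A secondary issue is correctly splitting the $x$-budget $x = x/3 + x/3 + x/3$ across the three union-bound applications, and verifying that the weakest tail ($e^{-\lambda x\sqrt{n}}$ from Proposition~\ref{prop:1}) can be absorbed into an $e^{-\lambda' x^2\sqrt{n}}$ form either by restricting to $x \le 1$ and handling $x>1$ directly, or by retaining the weaker bound and noting that the overall statement only needs a single exponentially decaying tail; both options give the stated form.
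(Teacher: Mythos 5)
Your proposal is correct and follows essentially the same route as the paper: the identical decomposition of $W_n(t)/\sqrt{n}$ into the centered sum coupled via Proposition~\ref{prop:1} with $J_n=A_n$, the time-change error $B_{A_n(t)/n}-B_{pG(t)}$ handled by Corollary~\ref{cor:1} (with Lemma~\ref{lem:B_G^n-G} under Assumption~\ref{assum:arriv-dropouts-2}), and the term $\mu(A_n(t)/\sqrt{n}-\hat{H}_n(t))$ handled by Proposition~\ref{prop:dropout-strong}, combined by a union bound. Your explicit remark about reconciling the $e^{-\la x\sqrt{n}}$ tail from Proposition~\ref{prop:1} with the stated $e^{-\la_8 x^2\sqrt{n}}$ form is a point the paper itself passes over silently, so no gap relative to the paper's own argument.
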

\begin{proof}
From Proposition~\ref{prop:1}, there exists a Brownian motion ${B}$ along with constants $\hat{C}_1$, $\hat{K}_1$ and $\hat{\la}_1$ such that for all $n \geq 1$ and $x > 0$ we have:
\beq\label{eq:workload-a}
\bp \lp \sup_{t \in [0, \infty)} \lln \dfrac{1}{\sqrt{n}} \sum_{i=1}^{A_n(t)} \dfrac{(V_i - \mu)}{\si} - {B}_{\frac{A_n(t)}{n}}  \rrn > \hat{C}_1 \dfrac{\log n}{\sqrt{n}} + x \rp < \hat{K}_1 e^{-\hat{\la}_1 x \sqrt{n}}.
\eeq
Observe that the workload process ${W_n}$ given by \eqref{eq:W_n} may be decomposed as follows:
\beq\label{eq:workload-decomp}
\dfrac{W_n(t)}{\sqrt{n}} = \si W_{1,n}(t) + \mu W_{2,n}(t) + \si W_{3,n}(t)  + \hat{R}_n(t),
\eeq
where
\begin{align*}
W_{1,n}(t) &= \dfrac{1}{\sqrt{n}} \sum_{i=1}^{A_n(t)} \dfrac{(V_i - \mu)}{\si} - B_{\frac{A_n(t)}{n}}\\
W_{2,n}(t) &= \dfrac{A_n(t)}{\sqrt{n}} - \hat{H}_n(t),
\end{align*} 
and
$$
W_{3,n}(t) =  B_{\frac{A_n(t)}{n}} - B_{pG(t)}.
$$
From Proposition~\ref{prop:dropout-strong}, there exist constants $\hat{C}_1$, $\hat{K}_1$ and $\hat{\la}_1$ such that for all $n \geq 1$ and $x > 0$:
\beq\label{eq:workload-b}
\bp \lp \sup_{t \in [0, \infty)} \lln W_{2,n}(t) \rrn > \hat{C}_2 \dfrac{\sqrt{\log n}}{n^{1/4}} + x \rp < \hat{K}_2 e^{-\hat{\la}_2 x^2 \sqrt{n}}.
\eeq
From Corollary~\ref{cor:1}, there exists constants $\hat{C}_3$, $\hat{K}_3$ and $\hat{\la}_3$ such that for all $n \geq 1$ and $x > 0$:
\beq\label{eq:workload-c}
\bp \lp \sup_{t \in [0, \infty)} \lln W_{3,n}(t) \rrn > \hat{C}_3 \dfrac{\sqrt{\log n}}{n^{1/4}} + x \rp < \hat{K}_3 e^{- \hat{\la}_3 x^2 \sqrt{n}}.
\eeq
Using the bounds \eqref{eq:workload-a}, \eqref{eq:workload-b} and \eqref{eq:workload-c} in the decomposition~\eqref{eq:workload-decomp}, we now obtain existence of constants $C_8$, $K_8$ and $\la_8$ such that for all $n \geq 1$ and $x > 0$:
$$
\bp \lp \sup_{t \in [0, \infty)} \lln \dfrac{W_n(t)}{\sqrt{n}} - \hat{R}_n(t) \rrn > C_8 \dfrac{\sqrt{\log n}}{n^{1/4}} + x \rp \leq K_8 e^{-\la_8 x^2 \sqrt{n}}.
$$
\end{proof}
\begin{comment}
The following corollary provides the corresponding result for the $\Delta_{(i)}/G/1$ model.
\begin{corollary}\label{prop:4}
Let Assumptions~\ref{assum:arriv-dropouts} and \ref{assum:1} hold with $p=1$. Then there exist a Brownian motion $B$, a  Brownian bridge $B^{\textnormal{br},n}$, {and a version of $V_1, \ldots, V_n$ and $T_1, \ldots, T_n$}, such that if the process $\hat{U}_n(t)$ be defined by
\beq\label{eq:Zn}
\hat{U}_n(t) = \sqrt{n} \mu F(t) + \lp \si B_{F(t)} + \mu B_{F(t)}^\textnormal{br} \rp,
\eeq
then there exist constants $C$, $K$ and $\la$ such that
$$
\bp \lc \sup_{t \in [0,\infty)} \lln \dfrac{W_n(t)}{\sqrt{n}} - \hat{U}_n(t) \rrn > C_4 \dfrac{\sqrt{\log n}}{n^{1/4}} + x \rc < K_4 e^{-\la_4 x^2 \sqrt{n}}.
$$ 
\end{corollary}
\end{comment}
The following result helps in extending strong embedding of processes to strong embedding of their Skorohod reflections defined in the sequel.
\begin{lemma}\label{lem:ana-result-1}
Let $f$ and $g$ be real-valued functions defined on $[0,\infty)$. Assume $f$ and $g$ satisfy the following property:
\beq\label{eq:sup-diff<del}
\sup_{t \in [0, \infty)} \lln f(t) - g(t) \rrn < \der.
\eeq
Then we have the following:
\beq\label{eq:sup-inf-diff<=del}
\sup_{t \in [0,\infty)} \lln \inf_{0\leq u \leq t} f(u) - \inf_{0\leq u \leq t} g(u)\rrn \leq \der.
\eeq
\end{lemma}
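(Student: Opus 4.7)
The plan is to exploit the fact that the functional $h \mapsto \inf_{0 \le u \le t} h(u)$ is $1$-Lipschitz with respect to the uniform norm, so the desired inequality follows from a straightforward two-sided comparison. Fix an arbitrary $t \in [0,\infty)$. From the hypothesis \eqref{eq:sup-diff<del}, for every $u \in [0,t]$ we have $f(u) \le g(u) + \delta$ and $g(u) \le f(u) + \delta$.

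Taking the infimum over $u \in [0,t]$ in the first inequality yields
$$
\inf_{0 \le u \le t} f(u) \le \inf_{0 \le u \le t} \bigl( g(u) + \delta \bigr) = \inf_{0 \le u \le t} g(u) + \delta,
$$
and the symmetric argument from the second inequality gives
$$
\inf_{0 \le u \le t} g(u) \le \inf_{0 \le u \le t} f(u) + \delta.
$$
Combining these two bounds gives
$$
\lln \inf_{0 \le u \le t} f(u) - \inf_{0 \le u \le t} g(u) \rrn \le \delta.
$$
Since the right-hand side is independent of $t$, taking the supremum over $t \in [0,\infty)$ produces the claimed bound \eqref{eq:sup-inf-diff<=del}.

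There is no real obstacle here: the only subtle point is that the hypothesis uses a strict inequality $<\delta$ while the conclusion uses $\le \delta$. This is exactly what one should expect, because passing $\delta$ through the infimum does not preserve strictness (e.g.\ one may approach but not attain equality when taking infima), so the conclusion is stated with the weak inequality. No additional regularity on $f$ or $g$ (continuity, boundedness, measurability) is required for the argument to go through.
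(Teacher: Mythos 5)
Your proof is correct, and it takes a genuinely different route from the paper's. You argue directly: pass the two pointwise inequalities $f(u) \le g(u) + \delta$ and $g(u) \le f(u) + \delta$ through the infimum over $[0,t]$, using that $\inf$ is monotone and commutes with adding a constant, then take the supremum over $t$. The paper instead proceeds by contradiction: it fixes an arbitrary $\ep>0$, supposes there is a $t_0$ with $\lln \inf_{u\le t_0} f(u) - \inf_{u\le t_0} g(u) \rrn \ge \delta + \ep$, selects an $\ep$-approximate infimizer $t_1^\ep$ for $f$ on $[0,t_0]$, and derives $g(t_1^\ep) < \inf_{u\le t_0} g(u)$, a contradiction; then lets $\ep\downarrow 0$. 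Your argument is shorter and reaches the weak bound $\le \delta$ in a single step rather than via a limiting argument, while the paper's contradiction route makes the role of the approximate infimizer explicit (useful, e.g., when one cares about where near-minima sit). One small remark: your closing comment that strictness is ``lost'' at the infimum is not quite accurate here---since $s := \sup_t |f(t)-g(t)| < \delta$ is itself a finite number, your same argument with $\delta$ replaced by $s$ would give the strict conclusion $< \delta$---but this has no bearing on the validity of what you proved, and the lemma only asks for the weak inequality.
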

\begin{proof}
Observe that \eqref{eq:sup-inf-diff<=del} is immediate if the following holds for every $\ep>0$:
\beq\label{eq:sup-inf-diff<del+ep}
\sup_{t \in [0,\infty)} \lln \inf_{0\leq u \leq t} f(u) - \inf_{0\leq u \leq t} g(u)\rrn < \der + \ep.
\eeq
Let us prove \eqref{eq:sup-inf-diff<del+ep} through contradiction. First, assume the contrary, namely, there exists $t_0 \in [0, \infty)$ such that
$$
\lln \inf_{0\leq u \leq t_0} f(u) - \inf_{0\leq u \leq t_0} g(u) \rrn \geq \der + \ep.
$$
Consequently, assume without loss of generality:
\beq\label{eq:lem-1}
\inf_{0 \leq u \leq t_0} f(u) < \inf_{0 \leq u \leq t_0} g(u) -(\der + \ep).
\eeq
Observe there exist points $t_1^{\ep}, t_2^{\ep}$ such that 
\beq\label{eq:lem-2}
f(t_1^{\ep}) < \inf_{0 \leq u \leq t_0} f(u) + \ep, ~\text{and}~g(t_2^{\ep}) < \inf_{0 \leq u \leq t_0} g(u) + \ep.
\eeq
From \eqref{eq:sup-diff<del} and \eqref{eq:lem-2} we obtain:
\beq\label{eq:lem-3}
g(t_1^{\ep}) - \der < f(t_1^{\ep}) < \inf_{0 \leq u \leq t_0} f(u) + \ep.
\eeq
Finally, from \eqref{eq:lem-1} and \eqref{eq:lem-3} we obtain
$$
g(t_1^{\ep}) < \inf_{0 \leq u \leq t_0} g(u),
$$
which contradicts the definition of infimum and is not true. Hence our assumption \eqref{eq:lem-1} is wrong, and we must have
\beq\label{eq:lem-4}
\inf_{0 \leq u \leq t_0} f(u) \geq \inf_{0 \leq u \leq t_0} g(u) -(\der + \ep).
\eeq
Interchanging $f$ and $g$ in \eqref{eq:lem-4} allows us to conclude \eqref{eq:sup-inf-diff<del+ep}, and hence \eqref{eq:sup-inf-diff<=del} as well. 
\end{proof}
We now arrive at a strong embedding of the total remaining workload.
\begin{proposition}\label{prop:vw-sa}
Let $\phi$ be the reflection map functional given by:
$$
\phi(f)(t) = f(t) - \inf_{u \leq t} f(u).
$$
Then, under the same assumptions and notations as in Proposition~\ref{prop:workload} there exist constants $C_9$, $K_9$ and $\la_9$ such that 
$$
\bp \lp \sup_{t \in [0,\infty)} \lln \dfrac{1}{\sqrt{n}} \phi(W_n - c_n \cdot \textnormal{id})(t) - \phi(\hat{R}_n - \dfrac{c_n}{\sqrt{n}} \cdot \textnormal{id})(t) \rrn > C_9 \dfrac{\sqrt{\log n}}{{n}^{1/4}} + x \rp < K_9 e^{-\la_9 x^2 \sqrt{n}},
$$
where $\textnormal{id}:x \to x$ denotes the identity function, and $c_n$ is a positive constant denoting the server efficiency rate.
\end{proposition}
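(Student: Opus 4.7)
The plan is to reduce the claim for the reflected processes to the already-established strong embedding of the (unreflected) workload in Proposition~\ref{prop:workload} via the Lipschitz property of the reflection map $\phi$ on the space of functions equipped with the uniform norm. The first observation is that $\phi$ is positively homogeneous, i.e.\ $\phi(\lambda f) = \lambda\, \phi(f)$ for every $\lambda > 0$, so that
$$
\tfrac{1}{\sqrt{n}}\, \phi(W_n - c_n \cdot \textnormal{id})(t)
= \phi\!\lp \tfrac{W_n}{\sqrt{n}} - \tfrac{c_n}{\sqrt{n}} \cdot \textnormal{id}\rp(t).
$$
Setting $f_n := W_n/\sqrt{n} - (c_n/\sqrt{n})\cdot\textnormal{id}$ and $g_n := \hat{R}_n - (c_n/\sqrt{n})\cdot\textnormal{id}$, the difference $f_n - g_n$ reduces to $W_n/\sqrt{n} - \hat{R}_n$, eliminating the deterministic drift $-(c_n/\sqrt{n})\cdot\textnormal{id}$ from the comparison.

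Next I would invoke Lemma~\ref{lem:ana-result-1} to show that $\phi$ is $2$-Lipschitz in the sup norm. Writing
$$
\phi(f_n)(t) - \phi(g_n)(t)
= \lp f_n(t) - g_n(t)\rp - \lp \inf_{u \leq t} f_n(u) - \inf_{u \leq t} g_n(u)\rp,
$$
the triangle inequality and Lemma~\ref{lem:ana-result-1} yield
$$
\sup_{t \in [0,\infty)} \lln \phi(f_n)(t) - \phi(g_n)(t) \rrn
\leq 2 \sup_{t \in [0,\infty)} \lln f_n(t) - g_n(t) \rrn
= 2 \sup_{t \in [0,\infty)} \lln \tfrac{W_n(t)}{\sqrt{n}} - \hat{R}_n(t) \rrn.
$$

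Having decoupled the reflected comparison from the ambient drift, the probabilistic bound then follows by a direct rescaling of the bound in Proposition~\ref{prop:workload}. Namely, setting $C_9 := 2 C_8$, one has
$$
\bp \lp \sup_{t} \lln \phi(f_n)(t) - \phi(g_n)(t) \rrn > C_9 \tfrac{\sqrt{\log n}}{n^{1/4}} + x \rp
\leq \bp \lp \sup_{t} \lln \tfrac{W_n(t)}{\sqrt{n}} - \hat{R}_n(t) \rrn > C_8 \tfrac{\sqrt{\log n}}{n^{1/4}} + \tfrac{x}{2} \rp,
$$
and Proposition~\ref{prop:workload} bounds the right-hand side by $K_8 \exp(-\la_8 x^2 \sqrt{n}/4)$, giving the claim with $K_9 := K_8$ and $\la_9 := \la_8 / 4$.

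There is no real obstacle here; the proof is essentially a one-page deduction, and the only thing that could go mildly wrong is a bookkeeping slip in propagating the $\sqrt{n}$-scaling through the reflection map. The conceptual work was already done in proving Lemma~\ref{lem:ana-result-1} (the contraction property of $f \mapsto \inf_{u \leq \cdot} f(u)$) and in establishing the strong embedding of $W_n/\sqrt{n}$ to $\hat{R}_n$, so this proposition is genuinely a corollary of those two ingredients. An alternative framing would be to observe that $\phi$ is the one-sided Skorokhod reflection on $[0,\infty)$, whose Lipschitz constant is well-known to be at most $2$; the direct argument via Lemma~\ref{lem:ana-result-1} is cleaner and self-contained within the paper.
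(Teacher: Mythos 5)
Your argument is correct and follows essentially the same route as the paper: the drift $c_n\cdot\textnormal{id}$ cancels in the difference, Lemma~\ref{lem:ana-result-1} controls the running infima, and the bound from Proposition~\ref{prop:workload} is transferred with the factor $2$ absorbed into the constants ($C_9=2C_8$, $\la_9=\la_8/4$). Your write-up is in fact a bit more careful about this constant bookkeeping than the paper's, but there is no substantive difference in approach.
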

\begin{proof}
Observe from Proposition~\ref{prop:workload}, we have:
\beq\label{eq:prop-5-1}
1-K_8e^{-\la_8 x^2 \sqrt{n}} < \bp \lp \sup_{t \in [0,\infty)} \lln \dfrac{W_n(t)}{\sqrt{n}} - \hat{R}_n(t) \rrn < C_8 \dfrac{\sqrt{\log n}}{n^{1/4}} + x \rp.
\eeq
From Lemma~\ref{lem:ana-result-1} we have
\begin{align}\label{eq:prop-5-2}
\bp &\lp \sup_{t \in [0,\infty)} \lln \dfrac{W_n(t)}{\sqrt{n}} - \hat{R}_n(t) \rrn < C_8 \dfrac{\sqrt{\log n}}{n^{1/4}} + x \rp \nonumber \\
  &= \bp \lp \sup_{t \in [0, \infty)} \lln \dfrac{W_n(t) - c_n t}{\sqrt{n}} - \lp \hat{R}_n(t) - \dfrac{c_n t}{\sqrt{n}} \rp \rrn < C_8 \dfrac{\sqrt{\log n}}{n^{1/4}} + x \rp \nonumber\\ 
&\leq \bp \lp \sup_{t \in [0,\infty)} \lln \inf_{0\leq u\leq t} \dfrac{W_n(u) - c_n u}{\sqrt{n}} - \inf_{0 \leq u \leq t} \lp \hat{R}_n(u) - \dfrac{c_n u}{\sqrt{n}} \rp \rrn \leq C_8 \dfrac{\sqrt{\log n}}{n^{1/4}} + x \rp.
\end{align}
Combining equations \eqref{eq:prop-5-1} and \eqref{eq:prop-5-2}, and recalling the definition of $\phi$ we obtain:
$$
1-K_9e^{-\la_9 x^2 \sqrt{n}} < \bp \lp \sup_{t \in [0,\infty)} \lln \dfrac{1}{\sqrt{n}} \phi\lp W_n - c_n \cdot \textnormal{id} \rp (t) - \phi \lp \hat{R}_n - \dfrac{c_n}{\sqrt{n}} \cdot \textnormal{id} \rp (t) \rrn \leq C_9 \dfrac{\sqrt{\log n}}{n^{1/4}} + x \rp,
$$
for some constants $C_9$, $K_9$ and $\la_9$.
\end{proof}

\section{A Strong Embedding for the Queue Length Process}\label{sec:q}
In this section we obtain a strong approximation to the queue length process. Control of the truncated renewal process $M_n$ ({recall relation~\ref{eq:M_n}}) would lead to a strong approximation of the queue length.

\begin{lemma}\label{lem:Q-1}
Let $Z_n(t)$ be given by:
$$
Z_n(t) = c_n \lp \dfrac{t}{\mu} - \dfrac{M_n(t)}{c_n} \rp.
$$
Then there exist constants $C_{10}$, $K_{10}$ and $\la_{10}$ such that 
\beq\label{eq:Zn-bnd}
\bp \lp \sup_{t \in [0, \infty)} \lln \dfrac{ Z_n(t)}{\sqrt{n}} - \dfrac{\si}{\mu} B_{\frac{M_n(t)}{n}} \rrn > C_{10} \dfrac{\log n}{\sqrt{n}} + x \rp < K_{10} e^{-\la_{10} x \sqrt{n}}.
\eeq
\end{lemma}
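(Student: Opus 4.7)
The plan is to reduce the lemma to Proposition~\ref{prop:1} applied to the service-time partial sums $S_k := \sum_{i=1}^k V_i$, with the truncated renewal process $M_n(t)$ itself playing the role of the stochastic index $J_n(t) \in \{0,1,\ldots,n\}$. That proposition will furnish, on a common probability space, a Brownian motion $B$, a version of $V_1,\ldots,V_n$, and constants $C_1',K_1',\la_1'$ such that, for every $x>0$,
$$
\bp\lp \sup_{t\in[0,\infty)} \lln \dfrac{S_{M_n(t)} - \mu M_n(t)}{\sqrt{n}} - \si B_{M_n(t)/n} \rrn > C_1'\dfrac{\log n}{\sqrt{n}} + x \rp < K_1' e^{-\la_1' x \sqrt{n}},
$$
which already handles the ``centered random walk'' part of the error.

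Next, I would algebraically split the target as
$$
\dfrac{Z_n(t)}{\sqrt{n}} - \dfrac{\si}{\mu} B_{M_n(t)/n} = \dfrac{c_n t - S_{M_n(t)}}{\mu\sqrt{n}} + \dfrac{1}{\mu}\lc \dfrac{S_{M_n(t)} - \mu M_n(t)}{\sqrt{n}} - \si B_{M_n(t)/n}\rc,
$$
so that the bracketed term is exactly what Proposition~\ref{prop:1} controls. The first summand is the first-passage overshoot: on $\{M_n(t)<n\}$ the defining inequalities $S_{M_n(t)} \leq c_n t < S_{M_n(t)+1}$ yield pathwise
$$
0 \leq c_n t - S_{M_n(t)} \leq V_{M_n(t)+1} \leq \max_{1\leq i \leq n}V_i.
$$

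The final ingredient is a maximal tail estimate for $\max_i V_i$, exploiting Assumption~\ref{assum:1} that the MGF of $V_1$ is finite in a neighborhood of $0$: there exist $\la^\ast>0$ and $M>0$ with $\bp(V_1>u) \leq Me^{-\la^\ast u}$ for every $u\geq 0$. A union bound and a suitable choice of $C''$ with $\la^\ast \mu C''>1$ give
$$
\bp\lp \max_{1\leq i \leq n} V_i > \mu C''\log n + \mu\sqrt{n}\, x \rp \leq Mn\, e^{-\la^\ast\mu C''\log n}\, e^{-\la^\ast \mu\sqrt{n}\, x} \leq M e^{-\la^\ast\mu\sqrt{n}\, x},
$$
which, after dividing by $\mu\sqrt{n}$, bounds the overshoot by $C''\log n/\sqrt{n} + x$. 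Apportioning $x/2$ to each contribution and combining the two exponential tails furnishes the lemma with suitable constants $C_{10},K_{10},\la_{10}$.

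The main obstacle I anticipate is the boundary regime $\{t: M_n(t)=n\}$, where the pointwise bound $c_n t - S_{M_n(t)}\leq V_{M_n(t)+1}$ is no longer available because $c_n t - S_n$ can grow linearly. In the subsequent queue-length arguments this lemma is only invoked with an index process bounded by the arrival count $A_n(t)\leq n$, so the truncation is never active and the pathwise bound above applies verbatim; alternatively, one can absorb $c_n t - S_n = (c_n t - n\mu)+(n\mu - S_n)$ into an adjusted drift, using that $|S_n - n\mu|$ has exponentially decaying tails at scale $\sqrt{n}$ by Lemma~\ref{lem:subexp-1}.
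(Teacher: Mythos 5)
Your proposal matches the paper's proof essentially step for step: both split $Z_n(t)/\sqrt{n} - (\sigma/\mu)B_{M_n(t)/n}$ into the centered random-walk discrepancy $(S_{M_n(t)}-\mu M_n(t))/(\mu\sqrt n) - (\sigma/\mu)B_{M_n(t)/n}$, controlled by Proposition~\ref{prop:1} with $J_n=M_n$, and the first-passage overshoot $(c_nt - S_{M_n(t)})/(\mu\sqrt n)$, controlled via a maximum of service times whose tail is bounded using the finite MGF. Your overshoot estimate is in fact written more carefully than the paper's, whose displayed step slips into ``for all $i$'' and an $n$-th power of a single tail probability where a union bound $n\,\mathbb{P}(V_1>\cdot)$ is what is meant.

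You also identify a genuine defect that the paper glosses over: for $t>S_n/c_n$ one has $M_n(t)=n$, so the overshoot $c_nt-S_n$ grows without bound, the supremum over $t\in[0,\infty)$ of $|Z_n(t)/\sqrt n - (\sigma/\mu)B_{M_n(t)/n}|$ is almost surely infinite, and the stated probability inequality cannot hold. The paper's pathwise estimate $|S_{M_n(t)}-c_nt|\le V_{(M_n(t)+1)\wedge n}$ fails precisely in that range and the proof does not acknowledge it. As you observe, the only use of this lemma (in Proposition~\ref{prop:queue-length}) substitutes the busy-time process $D_n(\cdot)\in[0,S_n/c_n]$ for $t$, on which domain the overshoot is genuinely bounded by $\max_{i\le n}V_i$; restricting the supremum accordingly, or to $[0,L_n]$ on the event $\{S_n/c_n\le L_n\}$ as the paper later does via Lemma~\ref{lem:S_n/c_n}, is the correct reading of the lemma. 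Your flagging of this is an improvement over the paper's treatment.
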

\begin{proof}

 Notice $Z_n(t)$ can be further decomposed as:
\beq\label{eq:Z_n-decomp}
Z_n(t) = \sum_{i=1}^{M_n(t)} \dfrac{(V_i-\mu)}{\mu} - \dfrac{1}{\mu} \lp \sum_{i=1}^{M_n(t)} V_i - c_n t \rp.
\eeq
Henceforth, the two terms in \eqref{eq:Z_n-decomp} will be approximated. Utilizing the definition of $M_n(t)$, the second term in \eqref{eq:Z_n-decomp} can be bounded as follows:
$$
\dfrac{1}{\mu} \lln \sum_{i=1}^{M_n(t)} V_i - c_n t \rrn \leq \dfrac{V_{(M_n(t) +1)\wedge n}}{\mu}.
$$
Hence for any constant $C>0$ we obtain:
\begin{multline*}
\bp \lp \sup_{t \in [0, \infty)} \dfrac{1}{\sqrt{n} \mu} \lln \sum_{i=1}^{M_n(t)} V_i - c_n t \rrn > C \dfrac{\log n}{\sqrt{n}} + x  \rp\\ \leq \bp \lp \dfrac{V_i}{\mu} > C {\log n} + x \sqrt{n}, \text{ for all } i = 1, \ldots, n \rp
= {\lp \bp \lp V_1 > C \mu \log n + \mu x \sqrt{n} \rp \rp}^n.
\end{multline*}
Using Chebyshev's inequality we obtain:
\begin{multline}\label{eq:Zn-bnd-1}
\bp \lp \sup_{t \in [0, \infty)} \dfrac{1}{\sqrt{n} \mu} \lln \sum_{i=1}^{M_n(t)} V_i - c_n t \rrn > C \dfrac{\log n}{\sqrt{n}} + x  \rp \leq {\lp \dfrac{\bex\lc e^{\der \frac{V_1}{n}} \rp}{e^{C \der \mu \frac{\log n}{n} + \der \mu \frac{x}{\sqrt{n}}}} \rp}^n \\
{\dfrac{\bex[e^{\der V_1}]}{e^{C \der \mu \log n + \der \mu x \sqrt{n}}}}\leq K e^{- \la x \sqrt{n}},
\end{multline}
for some constants $K$ and $\la$, where the last step is obtained by choosing $\der$ sufficiently small.

In order to approximate the first term in \eqref{eq:Z_n-decomp}, observe that from Proposition~\ref{prop:1} there exist constants $C_1$, $K_1$ and $\la_1$ such that:
$$
\bp \lp \sup_{t \in [0, \infty)} \lln \dfrac{1}{\sqrt{n}} \sum_{i=1}^{M_n(t)} \lp V_i - \mu \rp - \si B_{\frac{M_n(t)}{n}} \rrn > C_1 \dfrac{\log n}{\sqrt{n}} + x \rp < K_1 e^{- \la_1 x \sqrt{n}},
$$
which implies
\beq\label{eq:Z_n-decomp-1}
\bp \lp \sup_{t \in [0, \infty)} \lln \dfrac{1}{\sqrt{n}} \sum_{i=1}^{M_n(t)} \dfrac{(V_i - \mu)}{\mu} - \dfrac{\si}{\mu} B_{\frac{M_n(t)}{n}} \rrn > \dfrac{C_1}{\mu} \dfrac{\log n}{\sqrt{n}} + x \rp < K_1 e^{-\la_1 \mu x \sqrt{n}}.
\eeq
Using \eqref{eq:Zn-bnd-1} and \eqref{eq:Z_n-decomp-1}, our desired inequality \eqref{eq:Zn-bnd} is obtained.
\end{proof}
We now approximate the Brownian motion evaluated at $M_n$ appearing in Lemma~\ref{lem:Q-1}
\begin{lemma}\label{lem:B_M_nt/n}
There exist constants $C_{11}$, $K_{11}$ and $\la_{11}$ such that
$$
\bp \lp \sup_{0 \leq t \leq L_n} \lln B_{M_n(t)/n} - B_{(\frac{c_n}{n}\frac{t}{\mu}) \wedge 1} \rrn > C_{11} \dfrac{\sqrt{\log \lp \frac{c_n}{n} \frac{L_n}{\mu} \vee n \rp}}{n^{1/4}} + x \rp < K_{11} e^{-\la_{11} x^2 \sqrt{n}}.
$$
\end{lemma}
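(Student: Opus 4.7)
The plan is to invoke Proposition~\ref{prop:3} with the specific choice $\xi_n(t) = (\frac{c_n}{n}\frac{t}{\mu}) \wedge 1$ and $\Xi_n(t) = M_n(t)/n$ on $[0, L_n]$, since these are precisely the objects for which Proposition~\ref{lem:tBM-satisfy-2} has already certified that Assumptions~\ref{assum:tBM-1}, \ref{assum:tBM-2} and \ref{assum:tBM-3} hold. In particular, Proposition~\ref{lem:tBM-satisfy-2} gives $c_{\xi_n} = c_n/(n\mu)$, the bound $\sup_s |\Xi_n(s)| \leq 1$, and the DKW-type inequality with exponent $\ga = 1$. Assumption~\ref{assum:tBM-4} is trivially ensured by enlarging the probability space to carry an independent standard Brownian motion $B$.

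Applying Proposition~\ref{prop:3} directly then yields, for some universal constant $C$,
$$
\bp \lp \sup_{0 \leq t \leq L_n} \lln B_{M_n(t)/n} - B_{(\frac{c_n}{n}\frac{t}{\mu}) \wedge 1} \rrn > C\, \dfrac{\sqrt{\log\lp \lp c_{\xi_n} L_n \rp \vee n\rp}}{n^{1/4}} + x \rp \leq 2 \exp\lp -\dfrac{x^2}{2 \ups_n^2} \rp,
$$
where $\ups_n^2 = \sup_{s \in [0,L_n]} \bex \lc |M_n(s)/n - \xi_n(s)| \rc$ and $c_{\xi_n}L_n = \frac{c_n L_n}{n\mu}$, matching the logarithmic factor in the statement.

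The remaining — and I expect main — technical step is converting the $\ups_n^{-2}$ in the exponent into the advertised $\la_{11} \sqrt{n}$ rate. For this I would bound $\ups_n^2 = O(1/\sqrt{n})$ uniformly in $s \in [0,L_n]$ by integrating the tail inequality of Proposition~\ref{lem:tBM-satisfy-2}:
$$
\bex \lc \lln \dfrac{M_n(s)}{n} - \xi_n(s) \rrn \rc \leq \dfrac{2}{n} + \int_0^{\infty} k_1 e^{-k_2 n t^2 \wedge k_3 n t}\, dt,
$$
where the shift $2/n$ absorbs the additive term in the DKW-style bound. The Gaussian regime $t \leq k_3/k_2$ contributes $O(1/\sqrt{n})$ and the exponential regime contributes $O(1/n)$, so $\ups_n^2 = O(1/\sqrt{n})$ uniformly in $s$. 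Substituting this into the Borell-TIS tail produces the desired bound $K_{11} e^{-\la_{11} x^2 \sqrt{n}}$ for suitable constants $C_{11}$, $K_{11}$, $\la_{11}$, and the lemma follows.
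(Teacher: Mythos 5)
Your proposal is correct and mirrors the paper's argument: both invoke Proposition~\ref{prop:3}, whose hypotheses are verified by Proposition~\ref{lem:tBM-satisfy-2}, and then reduce the matter to showing $\ups_n^2 = O(1/\sqrt{n})$. The only difference is how you obtain that estimate: you integrate the DKW-style tail bound of Proposition~\ref{lem:tBM-satisfy-2}, splitting the Gaussian and exponential regimes, whereas the paper instead reuses the deterministic inequalities \eqref{eq:str-app-ineq-1}--\eqref{eq:str-app-ineq-3} from the proof of Proposition~\ref{lem:tBM-satisfy-2} together with the $L^1$ maximal inequality $\bex(\sup_{0\le k\le n}|\tilde S_k - k|)\le C\,\bex|\tilde S_n - n| = O(\sqrt n)$; both routes are valid and give the same $O(1/\sqrt n)$ rate, after which Borell--TIS delivers $K_{11}e^{-\la_{11}x^2\sqrt n}$.
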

\begin{proof}
It suffices to check the conditions in Proposition~\ref{prop:3}, which are satisfied by Proposition~\ref{lem:tBM-satisfy-2}. Combining equations \eqref{eq:str-app-ineq-1}, \eqref{eq:str-app-ineq-2} and \eqref{eq:str-app-ineq-3} we obtain:
$$
\bex\lc \sup_{s\in [0, L_n]} \lln \dfrac{M_n(t)}{n} - \dfrac{c_n}{n} \dfrac{t}{\mu} \wedge 1 \rrn \rc \leq \dfrac{\sup_{0 \leq k \leq n} \lln \tilde{S}_k - k \rrn}{n} + \dfrac{2}{n} + \dfrac{\bex|S_n -n\mu|}{n \mu}.
$$
Using the fact that $\bex(\sup_{0\leq k \leq n} |\tilde{S}_k - k|) \leq C \bex|\tilde{S}_n - n|$, we obtain:
$$
\bex\lc \sup_{s\in [0, L_n]} \lln \dfrac{M_n(t)}{n} - \dfrac{c_n}{n} \dfrac{t}{\mu} \wedge 1 \rrn \rc \leq \dfrac{C}{\sqrt{n}}.
$$
This yields our desired result.
\end{proof}

\begin{remark}\label{rem:c_n}
Notice from definition, $M_n(t)$ equals $n$ for all $t > \frac{S_n}{c_n}$. Hence, control of $B_{\frac{M_n(t)}{n}}$ for $t \in [0, \infty)$ reduces to a control of $B_{\frac{M_n(t)}{n}}$ for $t \in [0, \frac{S_n}{c_n}]$. However Lemma~\ref{lem:B_M_nt/n} leads a control of $B_{\frac{M_n(t)}{n}}$ over $t \in [0,L_n]$ for a predetermined and fixed sequence $L_n$. Hence we need $\frac{S_n}{c_n}$ to be in an interval $[0,L_n]$ with exponentially high probability (i.e. the complement event has exponentially decreasing probability). This is achieved in the following lemma.
%This is achieved only for $c_n \geq n$ as the following lemma shows. 
\end{remark}

\begin{lemma}\label{lem:S_n/c_n}
Let $S_n = V_1 + \ldots + V_n$. Then for every $\eta>0$, there exists $\der > 0$ such that
$$
\bp \lp \dfrac{S_n}{c_n} \geq L_n \rp \leq \exp \lp -(\der c_n L_n - n\eta \rp.
$$
\end{lemma}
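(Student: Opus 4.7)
The plan is a straightforward Chernoff bound exploiting the finite moment generating function assumption on the service requirements (Assumption~\ref{assum:1}). First I would rewrite the probability of interest as $\bp(S_n \geq c_n L_n)$ and apply Markov's inequality to the exponential moment: for any $\delta > 0$ in the neighborhood of zero where the MGF $R(\delta) := \bex[e^{\delta V_1}]$ exists, independence of the $V_i$'s gives
$$
\bp\lp \frac{S_n}{c_n} \geq L_n \rp \leq \frac{\bex[e^{\delta S_n}]}{e^{\delta c_n L_n}} = \frac{R(\delta)^n}{e^{\delta c_n L_n}} = \exp\lp n \log R(\delta) - \delta c_n L_n \rp.
$$

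Second, I would choose $\delta$ small. Since $R$ is continuous at $0$ with $R(0) = 1$, the function $\delta \mapsto \log R(\delta)$ is continuous at $0$ with value $0$. Hence, for any prescribed $\eta > 0$, there exists $\delta = \delta(\eta) > 0$ (inside the neighborhood where the MGF is finite) such that $\log R(\delta) \leq \eta$. Substituting back yields
$$
\bp\lp \frac{S_n}{c_n} \geq L_n \rp \leq \exp\lp n \eta - \delta c_n L_n \rp = \exp\lp -(\delta c_n L_n - n \eta) \rp,
$$
which is the desired bound.

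There is no real obstacle here; this is a textbook Chernoff argument and the only ingredient needed is the finite MGF in a neighborhood of $0$, which is already in Assumption~\ref{assum:1}. The utility of this lemma, as flagged in Remark~\ref{rem:c_n}, is that when $L_n$ is chosen so that $c_n L_n$ grows sufficiently faster than $n$ (relative to a suitable $\eta$), the right-hand side decays exponentially, allowing $B_{M_n(t)/n}$ to be controlled on $[0, S_n/c_n]$ via the fixed-interval estimate of Lemma~\ref{lem:B_M_nt/n} up to an event of negligible probability.
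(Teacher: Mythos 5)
Your argument is correct and is essentially the paper's proof: the paper applies the same Chernoff bound $\bp(S_n/c_n \geq L_n) \leq (\bex[e^{\delta V_1}])^n / e^{\delta c_n L_n}$ and then picks $\delta$ small enough that $\bex[e^{\delta V_1}] < e^{\eta}$, which is exactly your continuity-of-the-MGF step made implicit. Nothing further is needed.
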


\begin{proof}
$$
\bp \lp \dfrac{S_n}{c_n} \geq L_n \rp \leq \dfrac{\bex e^{t S_n}}{e^{t c_n L_n}} \leq \dfrac{(\bex[e^{tV_1}] )^n}{e^{tc_n L_n}} \leq e^{-(\der c_n L_n - n \eta)},
$$
for some $\der$ small enough such that $\bex e^{\der V_1} < e^{\eta}$.
\end{proof}

From Lemma~\ref{lem:B_M_nt/n} and Remark~\ref{rem:c_n} we obtain the following result, which controls $B_{\frac{M_n(t)}{n}}$ for all $t$ positive.
\begin{lemma}\label{lem:B_M_nt/n-2}
$$
\bp \lp \sup_{t \in [0, \infty)} \lln B_{M_n(t)/n} - B_{(\frac{c_n}{n}\frac{t}{\mu}) \wedge 1} \rrn > C_{11} \dfrac{\sqrt{\log \lp \frac{c_n L_n}{n \mu}\vee n \rp}}{n^{1/4}} + x \Bigg\vert \dfrac{S_n}{c_n} \leq L_n \rp < K_{11} e^{-\la_{11} x^2 \sqrt{n}}.
$$
\end{lemma}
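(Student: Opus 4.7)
The plan is to reduce the supremum over $[0,\infty)$ to a supremum over a compact interval using the structure of $M_n$ and the truncation $(\cdot)\wedge 1$, and then invoke Lemma~\ref{lem:B_M_nt/n} on that enlarged compact interval.

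First I would set $\tilde L_n := L_n \vee (n\mu/c_n)$. By the definition of $M_n$ in~\eqref{eq:M_n}, $M_n(t)=n$ for every $t > S_n/c_n$, so $B_{M_n(t)/n}=B_1$ whenever $t>S_n/c_n$. Likewise, $(\frac{c_n t}{n\mu})\wedge 1=1$ for $t \geq n\mu/c_n$, so $B_{(\frac{c_n t}{n\mu})\wedge 1}=B_1$ there. On the conditioning event $A:=\{S_n/c_n \leq L_n\}$ we have $\max(S_n/c_n,\,n\mu/c_n) \leq \tilde L_n$, and hence both Brownian values coincide (and equal $B_1$) for every $t \geq \tilde L_n$. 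Therefore, pointwise on $A$,
$$
\sup_{t \in [0,\infty)} \lln B_{M_n(t)/n} - B_{(\frac{c_n t}{n\mu}) \wedge 1} \rrn \;=\; \sup_{t \in [0,\tilde L_n]} \lln B_{M_n(t)/n} - B_{(\frac{c_n t}{n\mu}) \wedge 1} \rrn.
$$

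Next, I would apply Lemma~\ref{lem:B_M_nt/n} with $\tilde L_n$ in place of $L_n$. A short case analysis confirms $\frac{c_n \tilde L_n}{n\mu} \vee n = \frac{c_n L_n}{n\mu} \vee n$: if $L_n \geq n\mu/c_n$ then $\tilde L_n=L_n$ and the two sides are literally equal; if $L_n < n\mu/c_n$, then $\frac{c_n \tilde L_n}{n\mu}=1$ and $\frac{c_n L_n}{n\mu}<1$ are both dominated by $n$. Hence the deterministic threshold displayed in the target statement is preserved. Combined with the first step,
$$
\bp \lp \sup_{t \in [0,\infty)} \lln B_{M_n(t)/n} - B_{(\frac{c_n t}{n\mu})\wedge 1} \rrn > C_{11} \dfrac{\sqrt{\log(\frac{c_n L_n}{n\mu} \vee n)}}{n^{1/4}} + x,\; A \rp \leq K_{11}\, e^{-\la_{11} x^2 \sqrt n},
$$
and dividing by $\bp(A)$ yields the conditional bound.

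The main obstacle is controlling the factor $1/\bp(A)$ so that it does not inflate the constants $K_{11},\la_{11}$. This amounts to showing $\bp(S_n/c_n \leq L_n)$ is bounded away from $0$ uniformly in $n$. Under the regime relevant to the queueing analysis (namely $c_n L_n \geq n\mu$, which is precisely the range ensured by the choice of $L_n$ in Remark~\ref{rem:c_n} and enforced via Lemma~\ref{lem:S_n/c_n}), the exponential moment hypothesis in Assumption~\ref{assum:1} forces $\bp(S_n > c_n L_n)$ to decay exponentially in $n$, so $\bp(A) \to 1$ and the resulting multiplicative adjustment can be absorbed into $K_{11}$.
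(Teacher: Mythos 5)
Your proof is essentially the argument the paper intends (the paper itself leaves this as a one-line deduction ``from Lemma~\ref{lem:B_M_nt/n} and Remark~\ref{rem:c_n}''), and it improves on the paper's informal statement in one place: Remark~\ref{rem:c_n} only observes that $M_n(t)=n$ for $t>S_n/c_n$, but as you correctly note this is not enough to collapse the supremum, since $B_{(\frac{c_n t}{n\mu})\wedge 1}$ is still varying on $(S_n/c_n,\, n\mu/c_n)$. Your introduction of $\tilde L_n = L_n \vee (n\mu/c_n)$ closes that gap, and the case check showing $\frac{c_n\tilde L_n}{n\mu}\vee n = \frac{c_n L_n}{n\mu}\vee n$ preserves the stated threshold is correct (and is needed because $n\geq 1$).

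The one step that is weaker than you suggest is the control of the normalizing factor $1/\bp(A)$. You assert that $c_n L_n \geq n\mu$ is ``precisely the range ensured by the choice of $L_n$ in Remark~\ref{rem:c_n} and enforced via Lemma~\ref{lem:S_n/c_n}.'' Neither statement actually ensures this. The choice $L_n=n$ is made only later in Proposition~\ref{prop:queue-length}, and Theorem~\ref{thm:prelim-3} assumes only $\liminf_n c_n>0$, which with $L_n=n$ gives $c_n L_n \geq c_* n$ for some $c_*>0$ but not $c_n L_n \geq n\mu$. Moreover Lemma~\ref{lem:S_n/c_n} itself does not enforce anything: its exponent $\der c_n L_n - n\eta$ is negative (the bound is vacuous) unless $\der c_n L_n > n\eta$, and since $\der$ is chosen so that $\bex[e^{\der V_1}]<e^{\eta}$, this forces roughly $\der\mu<\eta<\der c_n L_n/n$, i.e., $c_n L_n/n > \mu$. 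Indeed, if $c_n<\mu$ and $L_n=n$, the law of large numbers gives $\bp(S_n/c_n\leq L_n)=\bp(S_n/n\leq c_n)\to 0$, and dividing by $\bp(A)$ destroys the constants. So the absorption step requires $c_n L_n/n$ bounded strictly above $\mu$ — an assumption implicit in the paper's use of Lemma~\ref{lem:S_n/c_n}, but not a consequence of the stated hypotheses and not something Remark~\ref{rem:c_n} delivers. You were right to flag $1/\bp(A)$ as the main obstacle; the justification just needs to name the actual condition ($c_n L_n/n$ bounded away from $\mu$ from above) rather than attribute it to the cited remark and lemma.
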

Finally we arrive at the main result for this section, namely a strong embedding for the queue length $Q_n$.
\begin{proposition}\label{prop:queue-length}
Let Assumptions~\ref{assum:arriv-dropouts} or \ref{assum:arriv-dropouts-2} and \ref{assum:1} hold. Let $\tilde{E}_n$ be given by:
\beq\label{eq:Q-tildeE_n}
\tilde{E}_n(t) =
\begin{cases}
pG(t) - \dfrac{c_n}{n}\dfrac{t}{\mu},~&\text{under Assum.~\ref{assum:arriv-dropouts}},\\
p \lp G(t) + r_n(G) \rp - \dfrac{c_n}{n}\dfrac{t}{\mu},~&\text{under Assum.~\ref{assum:arriv-dropouts-2}}.
\end{cases} 
\eeq
Recall $\hat{H}_n$ defined in Proposition~\ref{prop:dropout-strong} and define the process $\hat{Y}_n$ as follows:
$$
\hat{Y}_n(t) = \lp \hat{H}_n(t) - \dfrac{c_n t}{\sqrt{n} \mu} \rp + \dfrac{\si}{\mu} B_{( \frac{c_n t}{n \mu} + \inf_{s \leq t} \tilde{E}_n(s))}.
$$
Then there exists a version of $T_1, \ldots, T_n$, a version of $V_1,\ldots, V_n$, a version of $\zeta_i, \ldots, \zeta_n$ along with constants $C_{12}, K_{12}$, $\la_{12}$ and $\zeta$ independent of $n$, such that for all $n \geq 1$ and $x>0$:
$$
\bp \lp \sup_{t \in [0, \infty)} \lln \dfrac{Q_n(t)}{\sqrt{n}} - \phi(\hat{Y}_n)(t) \rrn > C_{12} \dfrac{\sqrt{\log n}}{{n^{1/4}}} + x \rp < K_{12} e^{-\la_{12} \sqrt{n} x^2 \wedge x} + e^{-n \zeta},
$$
if $c_n = O(n^m)$ for some $m > 0$ and $\liminf_n c_n > 0$, else:
$$
\bp \lp \sup_{t \in [0, \infty)} \lln \dfrac{Q_n(t)}{\sqrt{n}} - \phi(\hat{Y}_n)(t) \rrn > C_{12} \dfrac{\sqrt{\log c_n}}{{n^{1/4}}} + x \rp < K_{12} e^{-\la_{12} \sqrt{n} x^2 \wedge x}  + e^{-n \zeta},
$$
\end{proposition}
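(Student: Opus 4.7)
The plan is to reduce the problem, via the Skorohod reflection, to a sup-norm bound on a pre-reflection process, and then to invoke the strong embeddings already established. Since the queue starts empty, $Q_n \geq 0$, and $I_n(t) = t - D_n(t)$ is nondecreasing with increases only when $Q_n = 0$, I rewrite the identity $Q_n(t) = A_n(t) - M_n(D_n(t))$ using $M_n(s) = c_n s/\mu - Z_n(s)$ as
\[
Q_n(t) = \lc A_n(t) - \frac{c_n t}{\mu} + Z_n(D_n(t)) \rc + \frac{c_n I_n(t)}{\mu}.
\]
Because $c_n I_n/\mu$ is nondecreasing, starts at zero, and can increase only when $Q_n = 0$, the Skorohod reflection formula and the homogeneity of $\phi$ give $Q_n(t)/\sqrt{n} = \phi(\bar X_n/\sqrt{n})(t)$ with $\bar X_n(t) := A_n(t) - c_n t/\mu + Z_n(D_n(t))$, and Lemma~\ref{lem:ana-result-1} reduces the task to bounding $\sup_t \lln \bar X_n(t)/\sqrt{n} - \hat Y_n(t) \rrn$. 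I split this difference as
\begin{align*}
& \lp \frac{A_n(t)}{\sqrt{n}} - \hat H_n(t) \rp + \lp \frac{Z_n(D_n(t))}{\sqrt{n}} - \frac{\si}{\mu} B_{M_n(D_n(t))/n} \rp \\
&\quad + \frac{\si}{\mu}\lp B_{M_n(D_n(t))/n} - B_{(c_n D_n(t)/(n\mu)) \wedge 1} \rp + \frac{\si}{\mu} \lp B_{(c_n D_n(t)/(n\mu)) \wedge 1} - B_{\bar E_n(t)} \rp,
\end{align*}
writing $\bar E_n(t) := c_n t/(n\mu) + \inf_{s \leq t}\tilde E_n(s)$.

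The first three pieces are controlled by earlier results: Proposition~\ref{prop:dropout-strong} handles the first; Lemma~\ref{lem:Q-1}, whose bound is uniform in its argument, handles the second evaluated at $s = D_n(t)$; and Lemma~\ref{lem:B_M_nt/n-2} handles the third on the event $\{S_n/c_n \leq L_n\}$, whose complement contributes the $e^{-n\zeta}$ term via Lemma~\ref{lem:S_n/c_n} for a polynomially growing $L_n$. The heart of the argument is the fourth piece, which requires quantifying the fluid estimate $c_n D_n(t)/(n\mu) \approx \bar E_n(t)$ before invoking continuity of $B$. To obtain this I would bootstrap at fluid scale: dividing the Skorohod decomposition of $Q_n$ by $n$ yields
\[
\frac{c_n I_n(t)}{n\mu} = -\inf_{u \leq t} \lp \frac{A_n(u)}{n} - \frac{c_n u}{n\mu} + \frac{Z_n(D_n(u))}{n}\rp,
\]
and comparing the infimand against $\tilde E_n$ via Lemma~\ref{lem:ana-result-1} bounds the resulting gap by $\sup_t \lln A_n(t)/n - pG(t) \rrn + \sup_s \lln Z_n(s)/n \rrn$, plus a deterministic $p\,r_n(G) = O(1/\sqrt{n})$ term under Assumption~\ref{assum:arriv-dropouts-2}. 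By Proposition~\ref{prop:dropout-strong} and Lemma~\ref{lem:Q-1} (each rescaled by $\sqrt{n}$), both suprema are $O(1/\sqrt{n})$ with the desired exponential tail, hence $\sup_t \lln c_n D_n(t)/(n\mu) - \bar E_n(t) \rrn = O(1/\sqrt{n})$. A Borell-TIS argument in the spirit of Proposition~\ref{prop:3}, or equivalently a standard L\'evy modulus estimate for $B$, then converts a sup-gap of $O(1/\sqrt{n})$ in the time argument of $B$ into an $O(n^{-1/4}\sqrt{\log n})$ bound on the Brownian increment itself, with a Gaussian deviation tail.

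Assembling the four contributions produces the deterministic rate $O(\sqrt{\log n}/n^{1/4})$ together with a tail of the form $e^{-\la_{12}\sqrt{n}(x^2 \wedge x)}$, the minimum reflecting the mixture of Gaussian bounds coming from the time-changed Brownian estimates and the sub-exponential bound of Lemma~\ref{lem:Q-1}. The $c_n$-dichotomy enters exactly through Lemma~\ref{lem:B_M_nt/n-2}, whose contribution $\sqrt{\log(c_n L_n/(n\mu) \vee n)}/n^{1/4}$ collapses to $\sqrt{\log n}/n^{1/4}$ when $c_n = O(n^m)$ with $L_n$ polynomial in $n$, and otherwise to $\sqrt{\log c_n}/n^{1/4}$; Lemma~\ref{lem:S_n/c_n} keeps the $e^{-n\zeta}$ complement negligible in either regime. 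The main technical hurdle throughout is the fluid bootstrap that unwinds the implicit dependence of $D_n$ on $Q_n$; once that gap is controlled, everything else is routine book-keeping on top of the already-established embeddings.
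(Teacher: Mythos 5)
Your skeleton coincides with the paper's: you use the same pre-reflected process (your $\bar X_n/\sqrt n$ is exactly the paper's $Y_n$), the Skorohod identity $Q_n/\sqrt n=\phi(Y_n)$, Lemma~\ref{lem:ana-result-1} to pass the sup bound through the reflection, Proposition~\ref{prop:dropout-strong} for the arrival part, Lemma~\ref{lem:Q-1} for $Z_n(D_n(\cdot))$, the fluid bootstrap giving $\sup_t\lln c_nD_n(t)/(n\mu)-\bar E_n(t)\rrn=O(1/\sqrt n)$ with $\bar E_n(t)=c_nt/(n\mu)+\inf_{s\leq t}\tilde E_n(s)$ (this is the paper's busy-time step, obtained from the Skorohod identity for $I_n$), and Lemma~\ref{lem:S_n/c_n} with a polynomial $L_n$ producing the $e^{-n\zeta}$ term and the $c_n$-dichotomy. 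The only divergence is the final Brownian comparison: the paper pushes the DKW-type bound onto the pure-jump process $M_n(D_n(\cdot))/n$ (combining the busy-time estimate with Proposition~\ref{lem:tBM-satisfy-2}) and then applies Proposition~\ref{prop:3} once, whereas you retain Lemma~\ref{lem:B_M_nt/n-2} for $B_{M_n(D_n(t))/n}-B_{(c_nD_n(t)/(n\mu))\wedge1}$ and handle $B_{(c_nD_n(t)/(n\mu))\wedge1}-B_{\bar E_n(t)}$ separately.

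One caveat on that last piece. Proposition~\ref{prop:3} cannot be invoked verbatim for it, since $c_nD_n(\cdot)/(n\mu)$ is continuous rather than a pure jump process, so Assumption~\ref{assum:tBM-2} fails; this is precisely why the paper routes the comparison through $M_n(D_n(\cdot))/n$. Moreover, the ``fixed-gap plus L\'evy modulus'' shortcut you describe as equivalent is not equivalent at the level of the stated tail: bounding the increment by the modulus $w_B(\ep)$ on the event $\{\sup_t\lln c_nD_n(t)/(n\mu)-\bar E_n(t)\rrn\leq\ep\}$ forces a trade-off between the two error probabilities --- to get $e^{-c\sqrt nx^2}$ from the modulus you need $\ep\lesssim n^{-1/2}$, at which level the gap-event probability does not decay in $x$, while letting $\ep$ grow with $x$ produces tails of order $e^{-cxn^{1/4}}$ for $n^{-1/4}\lesssim x\lesssim 1$, weaker than the claimed $e^{-\la_{12}\sqrt n(x^2\wedge x)}$ (it still yields the $O(n^{-1/4}\sqrt{\log n})$ almost-sure rate needed for Theorem~\ref{thm:prelim-3}). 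To recover the proposition exactly, either run the conditional Borell--TIS argument you allude to, with the monotonicity and boundedness of $c_nD_n/(n\mu)$ replacing the jump-count entropy bound of Proposition~\ref{prop:3} and with $\ups_n^2=\sup_t\bex\lln c_nD_n(t)/(n\mu)-\bar E_n(t)\rrn=O(n^{-1/2})$, or simply follow the paper and compare $B_{M_n(D_n(\cdot))/n}$ directly with $B_{\bar E_n(\cdot)}$ so that Proposition~\ref{prop:3} applies as stated.
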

\begin{proof}
Observe that from \eqref{eq:q-length} the diffusion-scaled queue length $\frac{Q_n}{\sqrt{n}}$ can be further decomposed as:
$$
\dfrac{Q_n(t)}{\sqrt{n}} = Y_n(t) + \dfrac{c_n}{\sqrt{n}\mu} I_n(t),
$$
where the idle time process $I_n$ has been defined in \eqref{eq:idle}, $Y_n$ is given by:
\beq\label{eq:Q-Y_n}
Y_n(t) :=  \lp \dfrac{A_n(t)}{\sqrt{n}} - \hat{H}_n(t) \rp + \dfrac{c_n}{\sqrt{n}} \lp \dfrac{D_n(t)}{\mu} - \dfrac{M_n(D_n(t))}{c_n}\rp + \lp \hat{H}_n(t) - \dfrac{c_n}{\sqrt{n}} \dfrac{t}{\mu} \rp,
\eeq
and $\hat{H}_n$ has been defined in Proposition~\ref{prop:dropout-strong}.
By the Skorohod reflection theorem we have that 
\beq\label{eq:i-time-skor}
\dfrac{c_n}{\sqrt{n} \mu} I_n(t) = -  \inf_{s \leq t} Y_n(s),
\eeq
and the busy time process is given by:
$$
D_n(t) = t+ \dfrac{\sqrt{n} \mu}{c_n}\inf_{s \leq t} Y_n(s). 
$$
The diffusion-scaled queue length process is now given by the Skorohod reflection of $Y_n$:
\beq\label{eq:Q-refl}
\dfrac{Q_n(t)}{\sqrt{n}} = \phi(Y_n)(t).
\eeq
Thus a strong embedding of $\frac{Q_n}{\sqrt{n}}$ would follow from a strong embedding of $Y_n$. Notice that we already have a strong embedding of the arrival process courtesy Proposition~\ref{prop:dropout-strong}, namely there exist constants $C_7$, $K_7$ and $\la_7$ such that for all $n \geq 1$ and $x > 0$:
\beq\label{eq:Q-A_n}
\bp \lp \sup_{t \in [0,\infty)} \lln \dfrac{A_n(t)}{\sqrt{n}} - \hat{H}_n(t) \rrn > C_7 \dfrac{\log n}{\sqrt{n}} + x \rp < K_7 e^{-\la_7 x^2 \sqrt{n}}.
\eeq
Thus to complete the strong embedding of $Y_n$, we need to approximate $\tilde{Z}_n$ given by:
$$
\tilde{Z}_n(t) := c_n \lp \dfrac{D_n(t)}{\mu} - \dfrac{M_n(D_n(t))}{c_n} \rp.
$$
Observe that the busy time process $D_n(t)$ is non-decreasing and takes values in $[0, \frac{S_n}{c_n}]$ where $S_n = \sum_{i=1}^n V_i$. Consequently  from Lemma~\ref{lem:Q-1}, there exist constants $\hat{C}_1$, $\hat{K}_1$ and $\hat{\la}_1$ such that for all $n \geq 1$ and $x > 0$:
\beq\label{eq:Q-tildeZ}
\bp \lp \sup_{t \in [0, \infty)} \lln \dfrac{\tilde{Z}_n(t)}{\sqrt{n}} - \dfrac{\si}{\mu} B_{\frac{M_n(D_n(t))}{n}} \rrn > \hat{C}_1 \dfrac{\log n}{\sqrt{n}} + x \rp < \hat{K}_1 e^{-\hat{\la}_1 x \sqrt{n}}. 
\eeq
%Because $D_n(t) \leq t$ for all $t$, we also have from Lemma~\ref{lem:B_M_nt/n-2}:
%$$
%\bp \lp \sup_{t \in [0, \infty)} \lln B_{M_n(D_n(t))/n} - B_{(\frac{c_nD_n(t)}{n \mu}) \wedge 1} \rrn > C \dfrac{\sqrt{L_n} \sqrt{\log n}}{\sqrt{\log L_n} n^{1/4}} +x \rp < K e^{-\la x^2 \sqrt{n}} + e^{-\der c_n L_n - n}.
%$$
Let
\beq\label{eq:X-tilde}
\tilde{Y}_n(t) = \lp \hat{H}_n(t) - \dfrac{c_n t}{\sqrt{n} \mu} \rp + \dfrac{\si}{\mu} {B_{\frac{M_n(D_n(t))}{n}}}.
\eeq
Using \eqref{eq:Q-A_n} and \eqref{eq:Q-tildeZ} in \eqref{eq:Q-Y_n} we thus have constants $\hat{C}_2$, $\hat{K}_2$ and $\la_2$ such that for all $n \geq 1$ and $x > 0$:
\beq\label{eq:Y_n-tildeY_n}
\bp \lp \sup_{t \in [0, \infty)} \lln {Y_n(t)} - \tilde{Y}_n(t) \rrn > \hat{C}_2 \dfrac{\log n}{\sqrt{n}} + x \rp < \hat{K}_2 e^{-\hat{\la}_2  x \sqrt{n}}.
\eeq
From the converse of Lemma~\ref{lem:ana-result-1}, there exist constants $\hat{C}_3$, $\hat{K}_3$ and $\hat{\la}_3$ such that for all $n \geq 1$ and $x > 0$:
$$
\bp \lp \sup_{t \in [0, \infty)} \lln \inf_{s \leq t} Y_n(s) - \inf_{s \leq t} \tilde{Y}_n(s) \rrn > \hat{C}_3 \dfrac{\log n}{\sqrt{n}} + x \rp < \hat{K}_3 e^{-\hat{\la}_3 x \sqrt{n}}.
$$ 
Recalling the expression \eqref{eq:idle} and using \eqref{eq:i-time-skor} we have for every $n \geq 1$ and $x > 0$:
$$
\bp \lp \sup_{t \in [0, \infty)} \lln \dfrac{c_n}{\sqrt{n} \mu} (t -D_n(t)) + \inf_{s \leq t} \tilde{Y}_n(s) \rrn > \hat{C}_3 \dfrac{\log n}{\sqrt{n}} + x \rp < \hat{K}_3 e^{-\hat{\la}_3 x \sqrt{n}}.
$$
Consequently for every $n \geq 1$ and $x > 0$:
\beq\label{eq:busy-time-1}
\bp \lp \sup_{t \in [0, \infty)} \lln \dfrac{c_n}{{n} \mu} D_n(t) - \lp \dfrac{c_n}{n} \dfrac{t}{\mu} + \dfrac{1}{\sqrt{n}} \inf_{s \leq t} \tilde{Y}_n(s) \rp \rrn > \hat{C}_3 \dfrac{\log n}{{n}} + x \rp < \hat{K}_3 e^{-\hat{\la}_3 x {n}}.
\eeq
Recall $\tilde{E}_n$ given by \eqref{eq:Q-tildeE_n}. From the expression of $\tilde{Y}_n$ in \eqref{eq:X-tilde} and recalling $\hat{H}_n$ from Proposition~\ref{prop:dropout-strong} we have:
\begin{align}\label{eq:X-tilde_DKW}
&\bp \lp \sup_{t \in [0, \infty)} \lln \dfrac{\tilde{Y}_n(t)}{\sqrt{n}} - \tilde{E}_n(t) \rrn > \ep \rp \nonumber \\
&\leq \bp \lp \sup_{t \in [0, \infty)} \lln  \dfrac{p B_{G(t)}^{\textnormal{br},n}}{\sqrt{n}} \rrn + \sup_{t \in [0, \infty)} \lln \dfrac{\sqrt{p(1-p)} \hat{B}_{G(t)}}{\sqrt{n}} \rrn +\sup_{t \in [0, \infty)}  \lln \dfrac{\si}{\mu} \dfrac{B_{M_n(D_n(t))/n}}{\sqrt{n}} \rrn > \ep \rp \nonumber \\
&\leq \bp \lp \sup_{t \in [0, \infty)} \lln B_{G(t)}^{\textnormal{br},n} \rrn > \dfrac{\sqrt{n} \ep}{3p} \rp + \bp \lp \sup_{t \in [0, \infty)} \lln \hat{B}_{G(t)} \rrn > \dfrac{\sqrt{n} \ep}{3 \sqrt{p(1-p)}} \rp\\ \nonumber & \qquad +\bp \lp \sup_{t \in [0, \infty)} \lln B_{M_n(D_n(t))/n} \rrn > \dfrac{\sqrt{n}\mu }{\si} \dfrac{\ep}{3} \rp. 
\end{align}
Observe that both $G(t)$ and $\frac{M_n(D_n(t))}{n}$ are less than $1$. Using the tail probability for the supremum of the standard Brownian bridge on $[0,1]$ we have:
\beq\label{eq:Q-B-1}
 \bp \lp \sup_{t \in [0, \infty)} \lln B_{G(t)}^{\textnormal{br},n} \rrn > \dfrac{\sqrt{n} \ep}{3p} \rp \leq 2 \exp \lp \dfrac{2 n \ep^2}{9p^2} \rp.
\eeq
Using the tail probability for the supremum of the Brownian motion on $[0,1]$ we have
\beq\label{eq:Q-B-2}
 \bp \lp \sup_{t \in [0, \infty)} \lln \hat{B}_{G(t)} \rrn > \dfrac{\sqrt{n} \ep}{3 \sqrt{p(1-p)}} \rp \leq 4  \int_{\sqrt{n} \ep /3\sqrt{p(1-p)}}^{\infty} \dfrac{e^{-s^2/2}}{\sqrt{2\pi}},
\eeq
and
\beq\label{eq:Q-B-3}
\bp \lp \sup_{t \in [0, \infty)} \lln B_{M_n(D_n(t))/n} \rrn > \dfrac{\sqrt{n}\mu }{\si} \dfrac{\ep}{2} \rp
\leq 4 \int_{\sqrt{n} \ep \mu/2\si}^{\infty} \dfrac{e^{-s^2/2}}{\sqrt{2\pi}}.
\eeq
Using \eqref{eq:Q-B-1}, \eqref{eq:Q-B-2} and \eqref{eq:Q-B-3} in \eqref{eq:X-tilde_DKW}, we have that there exist constants $k_1$ and $k_2$ such that:
$$
\bp \lp \sup_{t \in [0, \infty)} \lln \dfrac{\tilde{Y}_n(t)}{\sqrt{n}} -\tilde{E}_n(t) \rrn > \ep \rp \leq k_1 \exp(-k_2 n \ep^2).
$$
From the converse of Lemma~\ref{lem:ana-result-1}, there exist constants $\hat{k}_1$ and $\hat{k}_2$ such that:
\beq\label{eq:tilde-X-fluid}
\bp \lp \sup_{t \in [0, \infty)} \lln \inf_{s \leq t}\dfrac{\tilde{Y}_n(s)}{\sqrt{n}} - \inf_{s \leq t} \tilde{E}_n(s) \rrn > \ep \rp \leq \hat{k}_1 \exp(-\hat{k}_2 n \ep^2).
\eeq
Now, using \eqref{eq:busy-time-1} and \eqref{eq:tilde-X-fluid} we have:
\begin{multline}\label{eq:cnDn/n-approx-1}
\bp \lp \sup_{t \in [0, \infty)} \lln \dfrac{c_n}{n} \dfrac{D_n(t)}{\mu} - \lp \dfrac{c_n t}{n \mu} + \inf_{s \leq t} \tilde{E}_n(s) \rp \rrn > \hat{C}_3 \dfrac{\log n}{n} + 2 \ep \rp\\
\leq \bp \lp \sup_{t \in [0, \infty)} \lln \dfrac{c_n}{n} \dfrac{D_n(t)}{\mu} - \lp \dfrac{c_n}{n} \dfrac{t}{\mu} + \dfrac{1}{\sqrt{n}} \inf_{s \leq t} \tilde{Y}_n(s) \rp \rrn > \hat{C}_3 \dfrac{\log n}{n} + \ep \rp \qquad \\ + \bp \lp \sup_{t \in [0, \infty)} \lln \dfrac{1}{\sqrt{n}} \inf_{s \leq t} \tilde{Y}_n(s) - \inf_{s \leq t}\tilde{E}_n(s) \rrn > \ep \rp \leq \hat{K}_3 e^{-\hat{\la}_3 n \ep} + \hat{k}_1 \exp (-\hat{k}_2 n \ep^2). 
\end{multline}
Since $|x \wedge 1 - y \wedge 1| \leq |x-y|$ we have from \eqref{eq:cnDn/n-approx-1} constants $\hat{k}_3$, $\hat{k}_4$ and $\hat{k}_5$ such that:
\beq\label{eq:cnDn/n-approx-2}
\bp \lp \sup_{t \in [0, \infty)} \lln \lp \dfrac{c_n}{n} \dfrac{D_n(t)}{\mu}\rp \wedge 1 - \lp \dfrac{c_n t}{n \mu} + \inf_{s \leq t} \tilde{E}_n(s) \rp \wedge 1 \rrn > \hat{C}_3 \dfrac{\log n}{n} + \ep \rp \leq \hat{k}_3 e^{-\hat{k}_4 n \ep^2 \wedge \hat{k}_5 n \ep}. 
\eeq
From Proposition~\ref{lem:tBM-satisfy-2} we have constants $\hat{k}_6$ and $\hat{k}_7$ such that:
\beq\label{eq:MnDn-approx}
\bp \lp \sup_{t \in [0, \infty)} \lln \dfrac{M_n(D_n(t))}{n} - \lp \dfrac{c_n}{n} \dfrac{D_n(t)}{\mu}  \rp \wedge 1 \rrn >  \dfrac{2}{n} + \ep ~\Bigg\vert~  \dfrac{S_n}{c_n} \leq L_n  \rp \leq \hat{k}_6 e^{- \hat{k}_7 n \ep^2}.
\eeq
Combining \eqref{eq:cnDn/n-approx-2} and \eqref{eq:MnDn-approx} we have constants $\tilde{k}_0$, $\tilde{k}_1$, $\tilde{k}_2$ and $\tilde{k}_3$ such that:
$$
\bp \lp \sup_{t \in [0, \infty)} \lln \dfrac{M_n(D_n(t))}{n} - \lp \dfrac{c_n t}{n \mu} + \inf_{s \leq t}\tilde{E}_n(s) \rp \wedge 1 \rrn > \tilde{k}_0 \dfrac{\log n}{n} + \ep  ~\Bigg\vert~  \dfrac{S_n}{c_n} \leq L_n  \rp
 \leq \hat{k}_1 e^{-\hat{k}_2 n \ep^2 \wedge \hat{k}_3 n \ep}.
$$
Now, from Proposition~\ref{prop:3} we obtain constants $\hat{C}_4$, $\hat{K}_4$ and $\hat{\la}_4$ such that:
\beq\label{eq:B-SDt-approx}
\bp \lp \sup_{t \in [0, \infty)} \lln B_{\frac{M_n(D_n(t))}{n}} - B_{( \frac{c_n t}{n \mu} +  \inf_{s \leq t} \tilde{E}_n(s))} \rrn > \hat{C}_4 \dfrac{\sqrt{\log (\frac{c_n L_n}{n \mu} \vee n)}}{n^{1/4}} + x \Bigg\vert \dfrac{S_n}{c_n} \leq L_n \rp \leq \hat{K}_4 e^{-\hat{\la}_4 x^2\sqrt{n}}.
\eeq
Let $\hat{Y}_n$ be given by
$$
\hat{Y}_n(t) = \lp \hat{H}_n(t) - \dfrac{c_n t}{\sqrt{n} \mu} \rp + \dfrac{\si}{\mu} B_{( \frac{c_n t}{n \mu} + \inf_{s \leq t} \tilde{E}_n(s))}.
$$ 
We now obtain from \eqref{eq:X-tilde} and \eqref{eq:B-SDt-approx} existence of constants $\hat{C}_5$, $\hat{K}_5$ and $\hat{\la}_5$ such that :
$$
\bp \lp \sup_{t \in [0, \infty)} \lln \tilde{Y}_n(t) - \hat{Y}_n(t) \rrn > \hat{C}_5 \dfrac{\sqrt{\log(\frac{c_n L_n}{n \mu} \vee n)}}{n^{1/4}} + x \Bigg\vert \dfrac{S_n}{c_n} < L_n \rp \leq \hat{K}_5 e^{-\hat{\la}_5 x^2 \sqrt{n}}.
$$
Hence from \eqref{eq:Y_n-tildeY_n} we have constants $\hat{C}_6$, $\hat{K}_6$ and $\hat{\la}_6$ such that
$$
\bp \lp \sup_{t \in [0, \infty)} \lln Y_n(t) - \hat{Y}_n(t) \rrn > \hat{C}_6 \dfrac{\sqrt{\log(\frac{c_n L_n}{n \mu} \vee n)}}{n^{1/4}} + x \Bigg\vert \dfrac{S_n}{c_n} \leq L_n \rp \leq \hat{K}_6  e^{-\hat{\la}_6 \sqrt{n} x \wedge x^2}.
$$
Recalling \eqref{eq:Q-refl}, we now obtain that there exist constants $\hat{C}_7$, $\hat{K}_7$ and $\hat{\la}_7$ such that
$$
\bp \lp \sup_{t \in [0, \infty)} \lln \dfrac{Q_n(t)}{\sqrt{n}} - \phi(\hat{Y}_n)(t) \rrn > \hat{C}_7 \dfrac{\sqrt{\log(\frac{c_n L_n}{n \mu} \vee n)}}{n^{1/4}} + x \Bigg\vert \dfrac{S_n}{c_n} < L_n \rp \leq  \hat{K}_7 e^{-\hat{\la}_7 \sqrt{n} x \wedge x^2}$$
Observe that for any two sets $A$ and $B$, we have that $P(A) \leq P(A|B) + P(B^{c})$. Thus using Lemma~\ref{lem:S_n/c_n} we have:
$$
\bp \lp \sup_{t \in [0, \infty)} \lln \dfrac{Q_n(t)}{\sqrt{n}} - \phi(\hat{Y}_n)(t) \rrn > \hat{C}_7 \dfrac{\sqrt{\log(\frac{c_n L_n}{n \mu} \vee n)}}{n^{1/4}} + x \rp \leq  \hat{K}_7 e^{-\hat{\la}_7 \sqrt{n} x \wedge x^2}
 + e^{-(\der c_n L_n -n \eta)}.
$$
Finally choosing $L_n = n$ yields the desired result.
\end{proof}

\section{Commentary and Conclusions}\label{sec:conclusions}
From a philosophy of science perspective, one can consider the bulk of the nonstationary queueing model literature as {\it phenomenological} (\cite{FrHa2006}) in nature -- that is, accurately reflecting empirical evidence, but not necessarily first principles. For instance, as noted in~Remark 2.7 (see~\cite{Wh2018} as well) a widely used nonstationary traffic model uses a composition construction, where a cumulative intensity function that captures the time-varying effects is posited. However, these models are not necessarily a first principles explanation of how customers choose to arrive at a service system. This distinction between phenomenological and mechanistic modeling is not crucial from a performance analysis/prediction perspective, but it can be important from a system design or optimization and control perspective. For instance, in~\cite{ArAtHo2017} the authors study the design of an optimal appointment schedule to a nonstationary single server queueing system. In this instance, the composition constructed time inhomogeneous queueing models~\cite{Wh2018} are not appropriate and the authors used a transitory queueing model. On the other hand, stationary queueing models\footnote{we define this as the content in either \cite{gross} or~\cite{kleinrock}}, on the other hand, are also { mechanistic} models~\cite{FrHa2006} in nature whereby the arrival and service models are directly related to the behavior and choices of individual customers. The \rsg model provides a mechanistic description of queueing behavior and can be used in optimization, control and game theoretic models of queueing behavior with much flexibility.

The \rsg model generalizes the \dgi model that has been studied in the literature. However, computing performance metrics for the discrete event \rsg queueing model is quite difficult, owing to the complicated time-dependencies in the model. The strong embeddings (and FSATs) proved in this paper provide error bounds from tractable diffusion approximations. Furthermore, these results can be specialized to yield prior diffusion limits obtained via weak convergence in~\cite{HoJaWa2016a,HoJaWa2016b,BeHoLe2019}. We anticipate that our FSAT results to be immensely useful for optimization and control problems involving queueing systems. 

There are several avenues for further exploration. First, our most general conditions on the traffic model in Assumption~\ref{assum:arriv-dropouts-2} allows
 the arrival epoch distribution to depend on the population size -- allowing for the possibility that an increase in the population will change customer behavior since there is an increase in demand for services. We currently assume that the ``dropout'' probability is stationary. A more general model would allow for time-dependent/non-stationary dropout probabilities. It seems possible to extend the current FSETs and FSATs to this setting. More complicated is establishing analogous results for a multi-server queue. In~\cite{HoJaWa2016a}, diffusion limits were established for a fixed multi-server queue in the large population asymptotic limit, relying on the fact that in the large sample limit the regulator process is identical to the single server case. In our current paper, however, the FSETs (which are for finite $n$) are much harder to prove, since we can no longer use the asymptotic simplification. This issue is compounded when the servers are not identical, and we will investigate these results in future papers. A further avenue for investigation is how to prove FSETs in a scaling regime that is analogous to the many-server heavy-traffic (MSHT) scaling. In this case, we anticipate that the diffusion approximation should be some type of a non-stationary Halfin-Whitt diffusion process, but we have not been able to prove the FSET, and it appears we might require some new mathematical innovations to achieve this result.

%\ACKNOWLEDGMENT{Authors supported by the National Science Foundation through grant CMMI/1636069.}

 \section*{Appendix}
\begin{theorem}%\label{thm:KMT-part-sum}
Let $F$ be a distribution function with mean $0$ and variance $1$. In addition, suppose the moment generating function corresponding to $F$, $R(t) = \E (e^{tX})$, $X \sim F$, exists in a neighborhood of $0$. Then, given a Brownian motion $B$, and using it, one can construct a sequence of random variables $X_1, X_2, \ldots$ which are independent and identically distributed to $F$. Furthermore, the partial sums of $X_i$'s are strongly coupled to the Brownian motion $B$ in the following sense. For every $n \in \N$ and $x > 0$:
\beq%\label{eq:strong-approx-1}
\bp \lp \sup_{1\leq k\leq n} \lln \sum_{i=1}^n X_i - B_n \rrn > C \log n + x \rp < K e^{-\la x},
\eeq
where $C$, $K$ and $\la$ are positive constants depending only on $F$.
\end{theorem}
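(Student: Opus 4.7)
The plan is to carry out the classical Hungarian dyadic construction. First I would fix $N$ with $2^N\geq n$ and work on $\{0,1,\dots,2^N\}$, using $B$ at integer times as the ``target.'' On a sufficiently rich probability space I would generate the $X_i$'s \emph{top-down} through a binary tree: starting from $B_{2^N}$, use the quantile transform to produce a random variable with the same law as $S_{2^N}=X_1+\cdots+X_{2^N}$; then, conditionally on $S_{2^N}$, use the conditional law of $S_{2^{N-1}}\mid S_{2^N}$ coupled (again by quantile transform) to the conditional law of $B_{2^{N-1}}\mid B_{2^N}$, which is Gaussian with mean $B_{2^N}/2$ and variance $2^{N-2}$. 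Iterating down to the leaves defines $X_i:=S_i-S_{i-1}$ with the correct joint law, and simultaneously pairs each partial sum $S_k$ with $B_k$ at every integer $k\leq 2^N$.

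The analytic core is the one-step \emph{conditional coupling lemma}: for $k\ge 1$ and $y\in\R$, if $S_{2k}=y$, then the conditional law of $S_k$ can be realized jointly with a $\mathcal N(y/2,k/2)$ variable $Z$ so that
\[
\bp\bigl(|S_k-Z|>C_0+u \,\big|\, S_{2k}=y\bigr)\leq K_0 e^{-\la_0 u},\qquad u>0,
\]
uniformly for $|y|\leq \eta k$ with $\eta$ small. The finite-MGF assumption enters here in an essential way: one uses a Cramér-type local CLT / Edgeworth expansion to compare the conditional density of $k^{-1/2}(S_k-y/2)$ with the standard Gaussian density, with relative error controlled by a saddle-point / tilting argument, and then converts the density comparison to a pointwise deviation via the quantile transform. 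Values of $y$ outside the moderate-deviation range $|y|\leq \eta k$ are handled separately by the usual Cramér large-deviation estimate for $S_{2k}$, which already provides $Ke^{-\la|y|}$ tails.

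Having the one-step lemma, I would then propagate the bound through the dyadic tree. At level $j$ (block length $2^j$) there are $2^{N-j}$ coupling steps, each contributing an independent-looking error with exponential tail. A union bound over the $2^{N-j}$ blocks at level $j$ eats a factor of $\log(2^{N-j})/\la_0$, i.e., $O(\log n)$, and summing the geometric-in-$j$ error contributions gives a total approximation error of the form $C\log n + \Xi$, where $\Xi$ has exponentially decaying tails with rate $\la$ depending only on $F$. This is precisely the bound $\bp(\sup_{1\leq k\leq n}|S_k-B_k|>C\log n+x)<Ke^{-\la x}$ claimed.

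The main obstacle, by a wide margin, is the conditional coupling lemma. Getting the conditional density comparison with \emph{uniform} error in the conditioning value $y$ across the moderate-deviation window, and sharp enough that the quantile coupling yields an \emph{$O(1)$-plus-exponential-tail} discrepancy per step (rather than an $O(k^{1/4})$ or $O(\sqrt{\log k})$ one), is what makes the final rate $O(\log n)$ possible. This is exactly where the Cramér tilting and saddle-point analysis must be executed carefully; every other step of the proof (dyadic recursion, union bound, error accumulation) is essentially bookkeeping once the one-step lemma is in hand.
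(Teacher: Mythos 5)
Your proposal follows the same classical KMT dyadic quantile-coupling construction that the paper's own proof sketch uses: the paper phrases the refinement step via the sum/difference pair $(U_{2n},\tilde U_{2n})$ conditionally quantile-coupled to the corresponding Gaussian pair, which is equivalent to your top-down coupling of $S_{2^{N-1}}\mid S_{2^N}$ to the law of $B_{2^{N-1}}\mid B_{2^N}$, and both arguments defer the hard uniform one-step estimate (your conditional coupling lemma, the paper's citation of Lemma~1 of KMT) before the same $O(\log n)$ accumulation over dyadic levels. So the approach is essentially identical and correct at the level of detail the paper itself provides.
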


\begin{proof}[Proof ideas and construction:]
Let us first give a sketch of the construction that yields the random variables $\mathbf{X}:=\{X_i, i\in \N\}$. Instead of fixing the Brownian motion it would suffice to  fix an infinite sequence of independent standard normals $\mathbf{Y}:=\{ Y_i, i \in \N \}$.  Let $G$ be the distribution function for a generic element from $\mathbf{Y}$ and recall $F$ is the distribution function for a generic element from $\mathbf{X}$.

Observe that $F(X)$ and $G(Y)$ are both $U[0,1]$ distributed random variables, and a natural suggestion to generate $X$ given $Y$ (or vice versa) is to equate them:
\beq\label{eq:quant-trans}
F(X) = G(Y), ~\text{i.e.}~X=F^{-1}(G(Y)),
\eeq
assuming $F^{-1}$ is properly defined. Equation~\eqref{eq:quant-trans} is the quantile transform which forms the backbone of the construction, details of which are to follow.

The above technique turns out to be useful especially if one is interested in bounding $|X-Y|$ as well (which is our aim too), e.g., when $F$ is the distribution of sum of $n$ iid mean $0$ random variables with finite moment generating function, and $G$ is the distribution of appropriately normalized normal random variable, then
$$
|X-Y| \leq C_1 \dfrac{X^2}{n} + C_2 ~~~\text{if}~~|X| < \ep n.
$$
Note the error of approximation does not increase with the number of components in $X$.

If someone wants to approximate $S_n = X_1+\cdots + X_n$ by $T_n = Y_1 + \cdots + Y_n$ (we are interested in $Y_i$'s distributed as normals), such that the components are independent, then we may divide $S_n$ into blocks, in particular if:
$$
S_n = X_1 + \cdots + \lp X_{n_i} + \cdots + X_{n_{i+1}} \rp + \cdots + X_n,
$$
we may approximate the block $X_{n_i} + \cdots +  X_{n_{i+1}}$ using $Y_{n_i} + \cdots Y_{n_{i+1}}$ whereby this approximation is first achieved by quantile transformation of  $Y_{n_i} + \cdots + Y_{n_{i+1}}$ and then finding the individual $X_i$'s by conditioning on the sum $X_{n_i} + \cdots + X_{n_{i+1}}$ (details to follow). This is the key idea in the KMT construction.

Suppose $(X_{m+1} + \cdots + X_{m+2n})$ has already been obtained as quantile transform of $(Y_{m+1}+ \cdots + Y_{m+2n})$. For normally distributed $Y_i$'s we have $[(Y_{m+1}+\cdots+Y_{m+n}) + (Y_{m+n+1} + \cdots + Y_{m+2n})]$ independent of $[(Y_{m+1} + \cdots + Y_{m+n}) - (Y_{m+n+1} + \cdots + Y_{m+2n})]$. Thus one may expect:
$$
U_{2n} := \lc \lp X_{m+1} + X_{m+n} \rp + \lp X_{m+n+1} + \cdots + X_{m+2n} \rp \rc
$$
to be approximately independent of 
$$
\tilde{U}_{2n} := \lc \lp X_{m+1}+ \cdots + X_{m+n} \rp - \lp X_{m+n+1} + \cdots + X_{m+2n} \rp \rc.
$$
Also let 
$$
V_{2n} := \lc \lp Y_{m+1} + \cdots + Y_{m+n} \rp +\lp Y_{m+n+1} + \cdots + Y_{m+2n} \rp \rc,
$$
and
$$
\tilde{V}_{2n} = \lc \lp Y_{m+1} + \cdots + Y_{m+n} \rp  - \lp Y_{m+n+1} + \cdots + Y_{m+2n} \rp \rc.
$$

As a consequence, one can obtain two blocks out of $U_{2n}$, having already obtained $U_{2n}$ from quantile transform of $V_{2n}$: 
\begin{enumerate}
\item[(i)] Consider $F(x|y) = P (\tilde{U}_{2n} < x | U_{2n} = y)$. 
\item[(ii)] Transform $G(V_{2n}/\sqrt{2n})$ which is $U[0,1]$ distributed by the application of the quantile transform, i.e., 
$$
\tilde{U}_{2n}| U_{2n} = F^{-1} \lp \cdot | U_{2n} \rp \lp G \lp \dfrac{V_{2n}}{\sqrt{2n}} \rp \rp.
$$
\item[(iii)] Now, let 
$$
X_{m+1} + \cdots + X_{m+n} = (U_{2n} + \tilde{U}_{2n})/2,
$$ 
and 
$$
X_{m+n+1} + \cdots + X_{m+2n} = (U_{2n} - \tilde{U}_{2n})/2.
$$
Observe that $(U_{2n} + \tilde{U}_{2n})/2$ and $(U_{2n} - \tilde{U}_{2n})/2$ are independent.
\end{enumerate}

In the construction these steps are applied iteratively, until the individual random variables get constructed. To this end, the following structure/blocking is considered in the beginning: the set of standard normals $\mathbf{Y}$ is broken down into: $\{Y_1, Y_2\} \cup_{j=1}^{\infty} \{ Y_{2^j+1}, \ldots, Y_{2^{j+1}} \}$. From each of these blocks we construct $\{X_1, X_2\} \cup_{j=1}^{\infty}\{ X_{2^j +1} , \ldots, X_{2^{j+1}} \}$ and then apply the three steps outlined above repeatedly until we arrive at individual random elements. This is possible because the block sizes are powers of $2$. 

The random variables $X_i$'s thus generated are clearly independent of each other. To see this, observe that the blocks $\{ Y_{2^{j}+1}, \ldots, Y_{2^{j+1}} \}$ are independent and hence the blocks $\{ X_{2^j + 1}, \ldots, X_{2^{j+1}} \}$ are independent as well. Each division of the block also generate independent random variables. Thus the newly constructed $\mathbf{X}$ is composed of independent entries. Furthermore each of them is distributed with distributed with distribution function $F$. 

The only remaining argument is to show that the $X_i$'s thus generated are close to the $Y_i$'s. For some $m \in \N$, let $j = \max \{ i: 2^i | m \}$. Then $m=(2k+1)2^j$ for some $k \in \N$. One can write $S_m$ as follows:
$$
2S_m = S_{(2k+2)2^j}  + S_{(2k)2^j} + (S_{(2k+1)2^j} - S_{(2k)2^j}) - (S_{(2k+2)2^j} - S_{(2k+1)2^j}),
$$
or written according to the notations in \cite{KMT}:
$$
2S_m = S_{(2k+2)2^j} + S_{(2k)2^j} + \tilde{U}_{j+1,k}.
$$
Applying this recursion repeatedly for $2^n < m \leq 2^{n+1}$ one arrives at:
$$
S_m = \tilde{S}_m + \sum_{i=j+1}^n c(i) \tilde{U}_{i, k(i)}, 
$$
where $\tilde{S}_m$ is the linear interpolation between $S_{2^n}$ and $S_{2^{n+1}}$ given by:
$$
\tilde{S}_m = \dfrac{2^{n+1}-m}{2^n} S_{2^n} + \dfrac{m-2^n}{2^n} S_{2^{n+1}},
$$
and $c(i)$ and indices $k(i)$ depend on $m$, $0 \leq c(i) \leq 1$, $k(i) = \lceil \frac{m}{2^{i}} \rceil$.

A similar representation is possible for $T_n = Y_1 + \cdots + Y_n$, $n \in \N$. One obtains:
$$
\lln (S_m - T_m) - (\tilde{S}_m - \tilde{T}_m) \rrn \leq \sum_{i=j+1}^n \lln \tilde{U}_{i, k(i)} - \tilde{V}_{i, k(i)} \rrn.
$$
Lemma~1 in \cite{KMT} gives bounds on these differences. These are used in a string of inequalities to bound the probability:
\beq\label{eq:main-prob}
P \lp \sup_{1 \leq m \leq 2^N} |S_m - T_m| > x\rp.
\eeq
Existence of moment generating function is required in order to apply Chernoff inequality to obtain the best possible bound. Note that non-existence of moment generating function will not provide an exponentially decreasing upper bound to \eqref{eq:main-prob}.
\end{proof}

\begin{theorem}%\label{thm:KMT-emp-pro}
There exists a probability space with independent $U[0,1]$ random variables  $U_1, U_2, \ldots$ and a sequence of Brownian bridges $B_1^{\textnormal{br}}, B_2^{\textnormal{br}}, \ldots$ such that for all $n \geq 1$ and $x \in \R$:
\beq%\label{eq:strong-approx-2}
\bp \lp \sup_{s \in [0,1]} \sqrt{n} \lln \al_n(s) - B_n^{br}(s) \rrn > C \log n + x \rp < K e^{-\la x},
\eeq
where the empirical process $\al_n$ is given by
$$
\al_n (s) = \sqrt{n} (F_n(s) - s), 
$$
and 
$$
F_n(s) = \dfrac{1}{n} \sum_{i=1}^n \mathbf{1}_{\{U_i \leq s\}}.
$$
\end{theorem}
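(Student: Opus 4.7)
The plan is to mirror the dyadic conditional quantile construction used for Theorem~\ref{thm:KMT-part-sum}, adapted to exploit the structural parallel between the Brownian bridge and the uniform empirical process. The key observation is this: if $U_1, \ldots, U_n$ are iid $U[0,1]$, then for any $s \in (0,1)$ the count $N_s := n F_n(s)$ is $\textnormal{Bin}(n,s)$ distributed, and conditional on $N_s$, the order statistics falling in $[0,s]$ and $(s,1]$ are independent. On the Gaussian side, the Brownian bridge $B^{\textnormal{br},n}$ has value at $s$ that is centered Gaussian with variance $s(1-s)$, and the bridge restricted to $[0,s]$ and $[s,1]$, after subtracting the appropriate linear interpolant, are independent given $B^{\textnormal{br},n}(s)$. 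This conditional-independence parallel is what powers the dyadic construction.

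First, I would fix a Brownian bridge $B^{\textnormal{br},n}$ on $[0,1]$ and, at the top level, couple $N_{1/2}\sim\textnormal{Bin}(n,1/2)$ to $n/2 + \sqrt{n}\,B^{\textnormal{br},n}(1/2)$ via the quantile transform. The binomial analogue of Lemma~1 of \cite{KMT} yields, on the event that $|Y| \le \ep\sqrt{n}$ with $Y := 2 B^{\textnormal{br},n}(1/2)$,
\[
\bigl| N_{1/2} - n/2 - \sqrt{n}\, B^{\textnormal{br},n}(1/2) \bigr| \le C_1 \frac{Y^2}{\sqrt{n}} + C_2,
\]
with exponentially decaying tails when $|Y|$ exceeds $\ep\sqrt{n}$. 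I would then recurse: on each dyadic subinterval $[k/2^j,(k+1)/2^j]$, conditional on the counts at coarser levels and the bridge values at coarser dyadic points, the count of $U_i$'s in the left half is Binomial, and the matching bridge increment is conditionally Gaussian; applying the same quantile-coupling lemma to each such pair constructs the $U_i$'s node by node.

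Second, I would terminate the recursion at level $j \approx \log_2 n$, at which point each leaf subinterval contains $O(1)$ points and the empirical measure has been matched to the dyadic skeleton of the Brownian bridge. Summing local errors along any root-to-leaf path uses the chaining argument from the proof of Theorem~\ref{thm:KMT-part-sum}: at level $j$ the local error is at most $C_1 Y_{j,k}^2/\sqrt{n 2^{-j}} + C_2$, and exponential concentration of each $Y_{j,k}$ together with a union bound over the $2^j$ nodes at level $j$ contributes only $O(1)$ per level, for a total of $O(\log n)$ with exponentially decaying tails. Third, to pass from the dyadic skeleton to the supremum over all $s\in[0,1]$, I would use the standard modulus-of-continuity estimates for both $\al_n$ and $B^{\textnormal{br},n}$ on intervals of length $1/n$; both oscillate by $O(\log n/\sqrt{n})$ with exponentially high probability, so refining beyond level $\log_2 n$ does not worsen the bound.

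The main obstacle will be the binomial-to-normal quantile-coupling lemma with the correct explicit remainder: one needs a statement of the form ``if $N \sim \textnormal{Bin}(n,1/2)$ and $Z$ is its normal quantile image with mean $n/2$ and variance $n/4$, then $|N - Z| \le C_1 (Z-n/2)^2/\sqrt{n} + C_2$ uniformly on $\{|Z-n/2| \le \ep\sqrt{n}\}$,'' together with Chernoff-type bounds on the complementary event. Establishing this lemma by a careful Stirling-formula calculation, and then propagating it through the dyadic recursion so that the union bound over $O(n)$ nodes preserves the exponential tail $K e^{-\la x}$ rather than producing polynomial corrections, is the combinatorial heart of the argument; it is carried out in detail in \cite[Theorem~4.4.1]{CsRe2014}.
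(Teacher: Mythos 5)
Your proposal is correct and follows essentially the same route as the paper's appendix sketch: a dyadic conditional-quantile construction pairing binomial counts with Brownian-bridge increments at dyadic points, powered by a binomial-to-normal coupling lemma (the KMT Lemma~1 analogue), assembled via chaining over the dyadic tree, and closed out by a modulus-of-continuity comparison below scale $1/n$. The paper's version phrases the recursion explicitly through the multinomial quantile maps $H_{N_i}(\Phi(Z_i))$ and extracts the normals $Z_i$ from the bridge by the midpoint formula, but these are the same ingredients you describe, and both defer the quantitative heart to \cite[Theorem~4.4.1]{CsRe2014}.
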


\begin{proof}[Proof ideas and construction:]
The following has been borrowed extensively from \cite{mattner} and the reader is encouraged to look at Sections 25.1-25.3 for a clearer exposition.  Here as well, we would construct the random variables $U_1, \ldots, U_n$ (for a fixed $n$) from the Brownian bridge $B_n^\textnormal{br}$. In fact, a similar dyadic scheme is used to construct the required variables. 

The first key idea in the construction is the fact that every real number in $[0,1]$ has a binary representation (terminating for rationals, while non-terminating for irrationals). 

\begin{enumerate}
\item[(i)] Let $\mathscr{Z} = \{ Z \} \cup_{j=1}^{\infty} \{ Z_i: i \in {\{ 0,1 \}}^j \}$ be an indexed set of independent standard normal random variables.

\item[(ii)] Let $H_m$ be the inverse distribution function for $\textnormal{Bin}(m, \frac{1}{2})$. Define $H_0 = 0$.

\item[(iii)] Let 
$$
N_0 = H_n(\Phi(Z))~~ \text{and}~~ N_1 = n - H_n(\Phi(Z)).
$$
Next for each $i \in {\{0,1\}}^j$, $j \geq 1$:
$$
N_{i,0} = H_{N_i} (\Phi(Z_i))~~\text{and}~~N_{i,1}=N_i - H_{N_i}(\Phi(Z_i)). 
$$
\end{enumerate}
Observe that $n = N_0+N_1$, and for each $i \in {\{ 0,1 \}}^j$, $j \geq 1$ we have $N_i = N_{i,0} + N_{i,1}$. This implies for each $j \geq 1$, $N_i, i \in {\{0,1\}}^j$ is $\textnormal{Multinomial}(n, \frac{1}{2^j} \cdots \frac{1}{2^j})$. 

We have obtained $\{N_i , i \in {\{ 0,1 \}}^j, j\geq 1\}$ from standard normals. Using these $N_i$'s we will now construct a uniform empirical distribution function $G_n$ as follows:
For $j \geq 1$ and $k = 1, \ldots, 2^j$ let
$$
G_n^j\lp\dfrac{k}{2^j}\rp = \dfrac{1}{n} \sum_{i \in A_{j,k}} N_i,
$$
where 
$$
A_{j,k} = \lcl i \in {\{ 0,1 \}}^j: \sum_{s=1}^j \dfrac{i_s}{2^j} \leq \dfrac{k}{2^j} ~\text {with}~ i=(i_1, \ldots,i_j) \rcl.
$$
With increasing $j$, the $G_n^j$'s become finer and finer. Thus by taking limits (as rationals are dense in [0,1]) we can construct a uniform empirical distribution function defined on the entire interval $[0,1]$:
$$
G_n = \lim_{j \to \infty} G_n^j.
$$

Inverting $G_n$ one obtains the order statistics:
$$
U_{(1)} \leq \cdots \leq U_{(n)}
$$
of $n$ independent $U[0,1]$ random variables. A random permutation of these order statistics provides a sample of $n$ independent $U[0,1]$ random variables.

Our aim is to construct $\al_n(s) = \sqrt{n}(G_n(s) - s)$, such that it is close to the Brownian bridge, $B^\textnormal{br}$. This means that the set of standard normals we considered previously should be obtained from the Brownian bridge in a \emph{nice manner}. The following is the \emph{nice manner} (here we have denoted the $n^{\text{th}}$ Brownian bridge as $B$):
\begin{enumerate}
\item[(i)] $Z=2B\lp \frac{1}{2} \rp$.
\item[(ii)] For $i \in {\{ 0,1 \}}^j$, $j \geq 1$:
$$
Z_i = 2^{j/2} \lp 2B\lp \dfrac{2k+1}{2^{j+1}} \rp - B\lp \dfrac{2k}{2^{j+1}} \rp -B\lp \dfrac{2k+2}{2^{j+1}} \rp \rp,
$$
where $k$ is obtained from $\frac{k}{2^j} = \sum_{s=1}^j \frac{i_s}{2^s}$ $(i = (i_1, \ldots, i_j))$.

\item[(iii)] It can be checked that these $Z_i$'s are standard normals and independent of each other.
\end{enumerate} 

Next one needs to argue why this construction of the uniform random variables produces an empirical process which is close to the Brownian bridge in the sense of \eqref{eq:strong-approx-2}. Some sketch of the proof can be found in \cite{Mason-notes}.
\end{proof}
Notice the following remark already alluded to in the main text.
\begin{remark}
The above construction of uniform random variables is for a fixed $n$. Having obtained $\{U_1, \ldots, U_n\}$, one is unable to obtain another $U_{n+1}$ such that the new set $\{U_1, \ldots, U_{n+1}\}$ satisfies \eqref{eq:strong-approx-2} with the same Brownian bridge. Instead we have to redo our construction. This necessitates the need for a different Brownian bridge $B^{\textnormal{br},n}$ for every $n$. However the following statement is true:

\emph{
There exists a Brownian bridge $B$ such that for each $n$, there exists $n$ independent uniforms $U_1, \ldots, U_n$ whose empirical process $\al_{n}$ satisfies:
$$
\bp \lp \sup_{s \in [0,1]} \sqrt{n} \lln \al_n(s) - B(s) \rrn > C \log n + x \rp < K e^{-\la x}.
$$
}
\end{remark}

\bibliographystyle{}
\bibliography{references}

\end{document}